\numberwithin{equation}{section}
\newcommand{\lapl}{\Delta} 
\newcommand{\grad}{\nabla} 
\newcommand{\Real}{\mathbb{R}} 
\newcommand{\Complex}{\mathbb{C}} 
\renewcommand{\natural}{\mathbb{N}} 
\renewcommand{\div}{\mathrm{div}} 
\newcommand{\dd}[1][y]{\if#1y\,\fi{\mathrm d}} 
\newcommand{\compEmb}{\Subset} 
\newcommand{\Jac}[3][]{{P^{(#2,#3)}_{#1}}} 
\newcommand{\wJac}[2]{\chi^{(#1,#2)}} 
\newcommand{\norm}[2][y]{\if#1y\left\fi\lVert#2\if#1y\right\fi\rVert} 
\newcommand{\abs}[2][y]{\if#1y\left\fi\lvert#2\if#1y\right\fi\rvert} 
\newcommand{\psp}[1]{\sp{(#1)}} 
\newcommand{\CC}{\mathrm{C}}
\newcommand{\LL}{\mathrm{L}}
\newcommand{\HH}{\mathrm{H}}
\newcommand{\HZ}{\mathrm{HZ}}
\newcommand{\WZ}{\mathrm{WZ}}
\newcommand{\floor}[1]{\left\lfloor#1\right\rfloor}
\newcommand{\poly}{\Pi}
\newcommand{\poch}[2]{\left(#1\right)_{#2}} 
\newcommand{\rat}{\mathrm{rat}}
\newcommand{\experimentalRate}{\mathrm{egr}}
\newcommand{\nan}{\multicolumn{1}{c}{---}}
\newcommand{\egr}[4]{\multicolumn{1}{c}{$\experimentalRate_{#3}$}}
\newcommand{\ratio}[4]{\multicolumn{1}{c}{$\rat\psp{#1,#2}_{#3,j}$}}
\DeclareMathOperator{\proj}{Proj}
\newcommand\placeholder{\ensuremath{%
  \mathchoice%
   {\mskip\thinmuskip\lower0.2ex\hbox{\scalebox{1.5}{$\cdot$}}\mskip\thinmuskip}}%
   {\mskip\thinmuskip\lower0.2ex\hbox{\scalebox{1.5}{$\cdot$}}\mskip\thinmuskip}%
   {\lower0.3ex\hbox{\scalebox{1.2}{$\cdot$}}}%
   {\lower0.3ex\hbox{\scalebox{1.2}{$\cdot$}}}%
}
\declaretheorem[name=Theorem,numberwithin=section]{thm}
\declaretheorem[name=Lemma,sibling=thm]{lemma}
\declaretheorem[name=Proposition,sibling=thm]{proposition}
\declaretheorem[name=Corollary,sibling=thm]{corollary}
\declaretheorem[name=Remark,sibling=thm,style=remark]{remark}
\newcolumntype{.}{D{.}{.}{3.3}}
\begin{document}
\title[Orthogonal polynomial projection in the unit disk]{Orthogonal polynomial projection error measured in Sobolev norms in the unit disk}
\author{Leonardo E. Figueroa}
\address{CI\textsuperscript{2}MA and Departamento de Ingenier\'ia Matem\'atica\\Universidad de Concepci\'on\\Casilla 160-C\\Concepci\'on, Chile}
\email{lfiguero@ing-mat.udec.cl}
\thanks{The author was supported by the MECESUP project UCO-0713 and the CONICYT project FONDECYT-1130923.}
\date{\today}

\begin{abstract}
We study approximation properties of weighted $\LL^2$-orthogonal projectors onto the space of polynomials of degree less than or equal to $N$ on the unit disk where the weight is of the generalized Gegenbauer form $x \mapsto (1-\abs{x}^2)^\alpha$.
The approximation properties are measured in Sobolev-type norms involving canonical weak derivatives, all measured in the same weighted $\LL^2$ norm.
Our basic tool consists in the analysis of orthogonal expansions with respect to Zernike polynomials.
The sharpness of the main result is proved in some cases and otherwise strongly hinted at by reported numerical tests.

A number of auxiliary results of independent interest are obtained including some properties of the uniformly and non-uniformly weighted Sobolev spaces involved, a Markov-type inequality, connection coefficients between Zernike polynomials and relations between the Fourier--Zernike expansions of a function and its derivatives.
\end{abstract}

\keywords{Zernike polynomials, connection coefficients, orthogonal projection, weighted Sobolev space}
\subjclass[2010]{41A25, 41A10, 42C10, 46E35}

\maketitle

\section{Introduction}

The main purpose of this work is proving the analogue on the unit disk of a well
known fact in the case of the interval; namely, in its simplest manifestation,
the orthogonal projector $\proj_N$ mapping $\LL^2(-1,1)$ onto the space of
polynomials of degree less than or equal to $N$, equivalently defined as the
operator returning the truncation at degree $N$ of the Fourier--Legendre series of
its argument, obeys
\begin{equation}\label{LegendreCase}
(\forall\,u\in\HH^l(-1,1)) \quad \norm{ u - \proj_N(u) }_{\HH^1(-1,1)} \leq C \, N^{3/2-l} \norm{u}_{\HH^l(-1,1)},
\end{equation}
where $C > 0$ depends only on $l$ and $\HH^1(-1,1)$ and $\HH^l(-1,1)$ denote standard Sobolev spaces (this was first proved in \cite{CQ:1982}; see \cite[Chapter 5]{CHQZ-I} for detailed proofs of \eqref{LegendreCase}, its analogues for the Chebyshev weight and the periodic unweighted case; see \cite{Guo:2000a} for its analogue for general Gegenbauer weights on the unit interval).
Our main result (\autoref{thm:lossyProjection}) is
\begin{equation}\label{mainResultAnnouncement}
\left(\forall\,u\in\HH^l_w(B^2)\right) \quad \norm{ u - \proj_N(u) }_{\HH^r_w(B^2)} \leq C \, N^{-1/2+2r-l} \norm{u}_{\HH^l_w(B^2)},
\end{equation}
where $B^2$ is the unit disk, $\proj_N$ is the $\LL^2_w(B^2)$-orthogonal projector onto the space of bivariate polynomials of total degree less than or equal to $N$ and $C > 0$ depends only on the integers $1 \leq r \leq l$ and the weight $w$, which in turn is of the generalized Gegenbauer form $x \mapsto (1-\abs{x}^2)^\alpha$, $\alpha > -1$.
The crucial role Fourier--Legendre expansions play in the cited proofs of \eqref{LegendreCase} will be taken up here by Fourier--Zernike expansions; in particular, $\proj_N$ in \eqref{mainResultAnnouncement} can be expressed as the truncation at total degree $N$ of the Fourier--Zernike series of its argument.

The main result, besides being important on its own, has applications in the analysis of polynomial interpolation operators (this is the motivation behind \eqref{LegendreCase} and its analogues in \cite{CQ:1982} and \cite[Chapter 5]{CHQZ-I}) and, because of the relative simplicity of orthogonal expansion truncation operators, has been exploited in the one-dimensional case by the present author to give partial characterizations of approximability spaces involved in the analysis of nonlinear iterative methods for the numerical solution of high-dimensional PDE \cite[Chapter 4]{Figueroa}.

We expect some of the auxiliary results to be useful in the design and the analysis of spectral methods on the unit disk (cf.\ the survey \cite{BY:2011}; see also \cite{VBLOBO:arXiv2015}).
Although we do report on some numerical tests which rely on some of those auxiliary results, our code is only intended to illustrate (when we can prove it) or suggest (otherwise) the sharpness of our main result and not for general use; in particular, no attempt has been made to make it particularly efficient.

We emphasize that neither \eqref{LegendreCase} nor \eqref{mainResultAnnouncement} are best or quasi-best approximation results; in particular, in each case the restriction of the weighted $\LL^2$-orthogonal projector $\proj_N$ does not result in the $\HH^1(-1,1)$- or the $\HH^r_w(B^2)$-orthogonal projector, respectively.
Such weighted Sobolev best approximation results can be found in \cite[Chapter 5]{CHQZ-I} and \cite{Guo:2000a} in the one-dimensional case and in \cite[\S~4]{LiXu:2014} for balls (constant weight). See also \cite[\S~5]{DaiXu:2011} for related results set in a different kind of Sobolev-type space.

\subsection{Structure of this work}

In the rest of this introductory section we briefly provide pointers to relevant literature on the Zernike families of orthogonal polynomials (\autoref{ssc:literature}), introduce some basic notation (\autoref{ssc:notation}) and some results concerning the Jacobi family of univariate polynomials we will use later (\autoref{ssc:Jac}).

In \autoref{sec:PEEUB} we present those auxiliary results that do not depend on the two-dimensional character of our main problem.
One particular result concerning the density of $\CC^\infty(\overline{B^d})$ ($B^d$ being the $d$-dimensional unit ball) functions in a Sobolev-type function space naturally associated with orthogonal polynomials on $B^d$ is of a different character compared to the rest of this work, so we put it in \autoref{sec:density}.

In the main \autoref{sec:2d} we introduce the exact normalization and indexing scheme of Zernike polynomials we will adopt---i.e., that of \cite{Wunsche:2005}---, obtain connection coefficients between Zernike polynomials of different parameters sometimes involving their derivatives and, as a consequence, relations between the expansion coefficients of a function and that of its derivatives (\autoref{ssc:ZPZFS}).
Then, we prove our main result and extend it by complex interpolation (\autoref{ssc:mainResult}) and later prove where we can and otherwise conjecture informed by numerical experiments the sharpness of our main result (\autoref{ssc:sharpness}).

\subsection{Zernike polynomials}\label{ssc:literature}

The families of Zernike or disk polynomials (\cite{BW:1954}, \cite{Koornwinder:1972},
\cite{Koornwinder:1975}, \cite[Chapter 2]{DunklXu:2014}, \cite{Wunsche:2005}) are pairwise $\LL^2_w$-orthogonal in the unit disk, with $w$ of the form $x \mapsto (1-\abs{x}^2)^\alpha$, and play there the role the Gegenbauer or symmetric Jacobi families of polynomials play in the unit interval.
Sometimes (but not in this work) the words ``complex'' or ``generalized'' are prepended if otherwise the names Zernike/disk polynomials are deemed to correspond exclusively to the real-valued or $\alpha = 0$ cases.
These families of polynomials have been used as basis functions for the approximation of functions and the numerical solution of partial differential equations (see the references in \cite[\S 4]{BY:2011} to which we would add \cite{Noll:1976}).
Just like their one dimensional counterparts, the Zernike polynomials are subject to a wealth of useful and sometimes quite elegant identities (cf.\ \cite{Wunsche:2005} mainly; see also \cite{Glaeske:1996}, \cite{SCH:2004}, \cite{JD:2007}, \cite{Waldron:2008}, \cite{Janssen:2014} and \cite{AEHEWG:2015}).
As it is bound to happen with multivariate orthogonal polynomials, the Zernike polynomials are not the only possible family of orthogonal polynomials with respect to the abovementioned weights; cf.\ \cite[\S~2.3]{DunklXu:2014}.

\subsection{Notation}\label{ssc:notation} We denote by $\natural$ the set of
strictly positive integers $\{1, 2, \dotsc \}$ and let $\natural_0 = \{0\} \cup
\natural$. We denote by $\poly^d$ the space of complex polynomials in $d$
variables and by $\poly^d_n$ the subspace of $\poly^d$ consisting of
polynomials of degree at most $n$.
Let $B^d := \{ x \in \Real^d \mid \abs{x} < 1\}$ (i.e., the unit ball of $\Real^d$) and $\mathbb{S}^{d-1} := \partial(B^d) = \{x \in \Real^d \mid \abs{x} = 1\}$ (i.e., unit sphere of $\Real^d$).

We will denote the Lebesgue $d$-dimensional measure of subsets $\Omega \subset
\Real^d$ simply by $\abs{\Omega}$ and integrals of functions $f \colon \Omega
\to \Complex$ with respect to this measure simply by $\int_\Omega f$ or
$\int_\Omega f(x) \dd x$.
We will denote by $\sigma_{d-1}$
the surface measure of $\mathbb{S}^{d-1}$ \cite[Ex.~3.10.82]{Bogachev}. Given
an integrable $f$ over $\Real^d$, its integral can be expressed in generalized
polar form:
\begin{equation}\label{polarIntegral}
\int_{\Real^d} f(x) \dd x = \int_0^\infty \int_{\mathbb{S}^{d-1}} f(r y) \, r^{n-1} \, \sigma_{d-1}(\dd[n] y) \dd r.
\end{equation}

Given an open subset $\Omega$ of an Euclidean space $\Real^d$, a measurable and
almost-everywhere non-negative and finite \emph{weight} function $w\colon
\Omega \to \Real$ and $m \in \natural_0$ let
\begin{equation}\label{L2}
\LL^2_w(\Omega) := \left\{ u \colon \Omega \to \Complex \text{ Lebesgue measurable} \mid \norm{u}_{\LL^2_w(\Omega)} := \left( \int_\Omega \abs{u}^2\, w \right)^{1/2} < \infty \right\},
\end{equation}
\begin{subequations}\label{Hm}
\begin{equation}
\HH^m_w(\Omega) := \left\{ u \in \LL^2_w(\Omega) \mid \norm{u}_{\HH^m_w(\Omega)} := \left( \sum_{k=0}^m \abs{u}_{\HH^k_w(\Omega)}^2 \right)^{1/2} < \infty \right\},
\end{equation}
where in turn the seminorms $\abs{\placeholder}_{\HH^k_w(\Omega)}$ are defined by
\begin{equation}
\abs{u}_{\HH^k_w(\Omega)} := \Big( {\sum_{\abs{\alpha} = k} \norm{\partial_\alpha u}_{\LL^2_w(\Omega)}^2} \Big)^{1/2}.
\end{equation}
\end{subequations}
The $\LL^2_w(\Omega)$ are Hilbert spaces and under the additional condition
$w^{-1} \in \LL^1_{\mathrm{loc}}(\Omega)$ so are the $\HH^m_w(\Omega)$
(cf.\ \cite{KO}). All the weight  functions used in this work satisfy these
conditions.

Given $a \in \Complex$ and $n \in \natural_0$ the Pochhammer symbol
$\poch{a}{n}$ is defined as $\prod_{k=0}^{n-1} (a+k)$. Due to the empty product
convention $\poch{a}{0} = 1$ for any $a \in \Complex$. Also, $\poch{a}{m+n} =
\poch{a}{m} \poch{a+m}{n}$. If $a \notin -\natural_0$, $\poch{a}{n} =
\Gamma(a+n)/\Gamma(a)$, where $\Gamma$ is the gamma function (cf.\
\cite[\S~1]{AAR:1999}), which in turn is finite and non-zero on $\Complex \setminus
(-\natural_0)$ and obeys $\Gamma(z+1) = z \, \Gamma(z)$ with $z$ over the same set. Besides
these properties we will also use the asymptotic formula (cf.\
\cite[\S~4.5]{Olver:1997})
\begin{equation}\label{GRA}
(\forall\,(a, b) \in \Complex \times \Complex) \quad \frac{\Gamma(z+a)}{\Gamma(z+b)} \sim z^{a-b} \quad \text{as } \Re(z) \to +\infty;
\end{equation}
i.e., the limit of the ratio of both sides is $1$.

We denote the forward difference operator with respect to some index $j$ by
$\Delta_j$; that is, $\Delta_j(f_j) = f_{j+1} - f_j$.
We will denote compact inclusion and compact embedding relations with the symbol $\Subset$.
Lastly, we will denote generic positive constants by $C$ with or without sub- and superscripts, tildes, hats, etc.\ and they may vary from line to line and even from expression to expression.

\subsection{Jacobi polynomials}\label{ssc:Jac}

Let $\alpha, \beta > -1$ and let
$\wJac{\alpha}{\beta}\colon (-1,1) \rightarrow \Real$ be the function defined
by $\wJac{\alpha}{\beta}(t) = (1-t)^\alpha (1+t)^\beta$.
The Jacobi polynomial of parameter $(\alpha, \beta)$ and degree $n$,
denoted by $\Jac[n]{\alpha}{\beta}$ is defined as the member of said degree of
the orthogonalization of the sequence of monomials $(x \mapsto x^n)_{n \in
\natural_0}$ with respect to the $\LL^2_{\wJac{\alpha}{\beta}}(-1,1)$ inner
product together with the normalization condition $\Jac[n]{\alpha}{\beta}(1) =
\binom{n+\alpha}{n}$ (cf.\ \cite[\S~4.1]{Szego:1975}). $\Jac[n]{\alpha}{\beta}$
is also a polynomial with respect to $\alpha$ and $\beta$
\cite[\P~4.22.1]{Szego:1975}.

In \cite[Theorem~7.1.3]{AAR:1999} we find the \emph{connection coefficients}
which allow for expressing $\Jac[n]{\gamma}{\beta}$ in terms of the
$\Jac[k]{\alpha}{\beta}$, $k \in \{0, \dotsc, n\}$; namely,
\begin{multline}\label{Jac-from-AAR}
\Jac[n]{\gamma}{\beta}
= \frac{\poch{\beta+1}{n}}{\poch{\alpha+\beta+2}{n}}\\
\times \sum_{k=0}^n \frac{ \poch{\gamma-\alpha}{n-k} \poch{\alpha+\beta+1}{k} (\alpha+\beta+2k+1) \poch{\beta + \gamma + n + 1}{k} }{ \Gamma(n-k+1) \poch{\beta+1}{k} (\alpha+\beta+1) \poch{\alpha+\beta+n+2}{k} } \Jac[k]{\alpha}{\beta}.
\end{multline}
We note that on account of the continuity of the Jacobi polynomials with
respect to their parameters this relation is still valid if $\alpha + \beta =
-1$ if the above coefficients are replaced by their corresponding limits.

\section{Polynomial eigenfunctions on the Euclidean unit ball}\label{sec:PEEUB}

\subsection{Polynomial eigenfunctions their associated Fourier series}

Let $d \in \natural$
and let $\rho \colon B^d \rightarrow \Real$ be the function defined by
\begin{equation*}
(\forall \, x \in B^d) \quad \rho(x) := 1-\abs{x}^2.
\end{equation*}
Following \cite[eq.~(3.1.2)]{DunklXu:2014} for every $\alpha > -1$ we define
the space of polynomials orthogonal with respect to
$\LL^2_{\rho^\alpha}(B^d)$ of degree exactly $N$ as
\begin{equation*}
\mathcal{V}_N(\LL^2_{\rho^\alpha}(B^d)) = \left\{ P \in \poly^d_N \colon \forall\,Q \in \poly^d_{N-1},\ \langle P, Q \rangle_{\LL^2_{\rho^\alpha}(B^d)} = 0 \right\}.
\end{equation*}
It transpires from the theory exposed in Section 3.2 of \cite{DunklXu:2014} (using the fact that $\alpha > -1$) that
$\bigcup_{k=0}^N \mathcal{V}_k(\LL^2_{\rho^\alpha}(B^d))$ spans $\poly^d_N$
and, consequently,
$\bigcup_{k \geq 0} \mathcal{V}_k(\LL^2_{\rho^\alpha}(B^d))$ spans $\poly^d$. Also, in
\cite[Theorem 8.1.3]{DunklXu:2014}, it was proved that members of
$\mathcal{V}_N(\LL^2_{\rho^\alpha}(B^d))$ have to satisfy any of the equivalent
equations below (we commit the usual abuse of notation consisting of only
selectively omitting the independent variable and its components):
\begin{subequations}\label{twoForms}
\begin{gather}
\label{DunklXuForm1}
\lapl P - \sum_{j=1}^d \partial_j \left(x_j \left[ 2 \alpha \, P + \sum_{i=1}^d x_i \partial_i P \right]\right) = -(N+d)(N + 2\alpha) P,\\
\label{DaiXuForm2}
\sum_{i=1}^d \rho^{-\alpha} \partial_i \left( \rho^{\alpha+1} \partial_i P \right) + \sum_{1 \leq i < j \leq d} (x_j \partial_i - x_i \partial_j)^2 P = -N(N+d+2\alpha) P.
\end{gather}
\end{subequations}
The form \eqref{DunklXuForm1} is from equation~5.2.3 of \cite{DunklXu:2014} and
the form \eqref{DaiXuForm2} is the combination of equation~5.9 and Proposition
7.1 of \cite{DaiXu:2010}; their equivalence can be checked directly.

Let $\mathcal{H}^d_n$ denote the linear space of harmonic polynomials
homogeneous of degree $n$ on $\Real^d$. The spherical harmonics of degree $n$ are the restrictions of
members of $\mathcal{H}^d_n$ to the unit sphere $\mathbb{S}^{d-1}$. Spherical harmonics of differing degrees are $\LL^2(\sigma_{d-1})$-orthogonal. Let $(Y^d_{n,\nu})_{\nu =
1}^{\dim(\mathcal{H}^d_n)}$ be a $\LL^2(\sigma_{d-1})$-orthogonal basis of $\mathcal{H}^d_n$. Then, given
$N \in \natural_0$, the functions $P^N_{j,\nu}(\rho^\alpha; \placeholder)
\colon B^d \rightarrow \Real$ defined by
\begin{equation}\label{BOVN}
(\forall\,x \in B^d) \quad P^N_{j,\nu}(\rho^\alpha;x) := \Jac[j]{\alpha}{N-2j + \frac{d-2}{2}}(2\abs{x}^2-1) \, Y^d_{N-2j,\nu}(x),
\end{equation}
where $\nu \in \{1,\dotsc,\dim(\mathcal{H}^d_{N-2j})\}$, $j \in
\{0,\dotsc,\floor{N/2}\}$, are polynomials and form a
$\LL^2_{\rho^\alpha}(B^d)$-orthogonal basis of
$\mathcal{V}_N(\LL^2_{\rho^\alpha}(B^d))$ (cf.\ \cite[Proposition 5.2.1]{DunklXu:2014}) and, as members of the latter,
satisfy the equations in \eqref{twoForms}. The above considerations motivate the introduction of the
index sets
\begin{subequations}\label{indices}
\begin{equation}
\label{degreeNIndices}
(\forall\,N \in \natural_0) \quad \mathcal{I}^d_N := \left\{ (j, \nu) \mid j \in \{0, \dotsc, \floor{N/2}\},\ \nu \in \{1, \dotsc, \dim(\mathcal{H}^d_{N-2j})\} \right\}
\end{equation}
and
\begin{equation}
\label{allIndices}
\mathcal{I}^d := \left\{ (N, j, \nu) \mid N \in \natural_0,\ (j,\nu) \in \mathcal{I}^d_N \right\}.
\end{equation}
\end{subequations}
We also introduce a notation for the squared norms of the polynomials comprising the above bases.
\begin{equation}\label{squaredNorm}
(\forall \, (N, j, \nu) \in \mathcal{I}^d) \quad h^N_{j,\nu}(\rho^\alpha)
:= \norm{P^N_{j,\nu}(\rho^\alpha; \placeholder)}_{\LL^2_{\rho^\alpha}(B^d)}^2
\end{equation}

From \eqref{DaiXuForm2} it is apparent that solutions of the equations in
\eqref{twoForms} are eigenfunctions of the eigenvalue problem
\begin{equation}\label{strong-EV}
L\psp{\alpha}(u)
:= -\frac{1}{\rho^\alpha} \div\left( \rho^{\alpha+1} \grad u \right) - \sum_{1\leq i<j\leq d} (x_j\partial_i - x_i\partial_j)^2 u = \lambda u.
\end{equation}
In particular, each polynomial $P^N_{j,\nu}(\rho^\alpha;\placeholder)$
defined in \eqref{BOVN} is an eigenfunction of \eqref{strong-EV} with
associated eigenvalue
\begin{equation}\label{eigenvalues}
\lambda_{\alpha,N} = N(N+d+2\alpha),
\end{equation}
which depends only on the dimension $d$, the total degree $N$ and the
singularity parameter $\alpha$ and not on the indices $j$ and $\nu$
of the particular spherical harmonic involved.

For the purposes of approximation in $\rho^\alpha$-weighted Sobolev spaces,
we introduce the following variational eigenvalue problem
reformulation of \eqref{strong-EV}: Find $(\lambda,u) \in \Complex \times
\left( \HZ_\alpha(B^d) \setminus \{0\} \right)$ such that
\begin{equation}\label{wrong-EV-problem}
\int_{B^d} \grad u \cdot \overline{\grad v} \, \rho^{\alpha+1} + \sum_{1 \leq i < j \leq d} \int_{B^d} (x_j \partial_i u - x_i \partial_j u) \overline{(x_j \partial_i v - x_i \partial_j v)} \, \rho^\alpha\\
= \lambda \int_{B^d} u \, \overline{v} \, \rho^\alpha
\end{equation}
for all $v \in \HZ_\alpha(B^d)$; here,
\begin{equation}\label{definition-of-HZ}
\HZ_\alpha(B^d) = \overline{\CC^\infty(\overline{B^d})}^{\norm{\placeholder}_{\WZ_\alpha(B^d)}}
\end{equation}
where the Hilbert space $\WZ_\alpha(B^d)$ is, in turn, defined by
\begin{subequations}\label{definition-of-WZ}
\begin{gather}
\WZ_\alpha(B^d) := \left\{ v \in \LL^2_{\rho^\alpha}(B^d) \colon \norm{v}_{\WZ_\alpha(B^d)} < \infty\right\},\\
\norm{v}_{\WZ_\alpha(B^d)} := \bigg( \norm{v}_{\LL^2_{\rho^\alpha}(B^d)}^2 + \norm{\grad v}_{[\LL^2_{\rho^{\alpha+1}}(B^d)]^d}^2 + \sum_{1\leq i<j\leq d} \norm{x_j \partial_i v - x_i \partial_j v}_{\LL^2_{\rho^\alpha}(B^d)}^2 \bigg)^{1/2}.
\end{gather}
\end{subequations}

\begin{remark}\hfill
\begin{enumerate}
\item The differential operator $x_j
\partial_i - x_i \partial_j$ can be interpreted as an angular derivative (cf.\
\cite[\S~2.1]{DaiXu:2010}). Thus, roughly speaking, the space $\WZ_\alpha(B^d)$
is the subspace of $\LL^2_{\rho^\alpha}(B^d)$ whose members have their angular
derivatives in $\LL^2_{\rho^\alpha}(B^d)$ and their radial derivative in the
larger space $\LL^2_{\rho^{\alpha+1}}(B^d)$.
\item The arguments put forth in \cite{KO} are readily adapted to the presence
of the non-standard differential operator $x_j \partial_i - x_i \partial_j$ in
order to guarantee that $\WZ_\alpha(B^d)$ is indeed a Hilbert space.
\item If $\alpha \geq 0$ then $\HZ_\alpha(B^d) = \WZ_\alpha(B^d)$; that is,
$\CC^\infty(\overline{B^d})$ functions are dense in $\WZ_\alpha(B^d)$. This is proved in
\autoref{cor:WZ-density-nonNeg} in \autoref{sec:density}.
\end{enumerate}
\end{remark}

We start the study of \eqref{wrong-EV-problem} with a number of basic results
on its interaction with the $\HZ_\alpha(B^d)$ and
$\HH^k_{\rho^\alpha}(B^d)$ spaces. While doing this we will often make
silent use of the fact that
\begin{equation}\label{rho-dist-equivalent}
(\forall\, x \in B^d) \quad \operatorname{dist}(x,\partial B^d) \leq \rho(x) \leq 2\, \operatorname{dist}(x,\partial B^d),
\end{equation}
for many results in the cited literature are stated in terms of spaces weighted
with such distance-to-the-boundary functions.

\begin{proposition}\label{pro:basic} Let $\alpha > -1$. Then,
\begin{enumerate}
\item\label{it:compEmb} $\HZ_\alpha(B^d) \compEmb \LL^2_{\rho^\alpha}(B^d)$.
\item\label{it:density} The space of polynomials defined over $B^d$ is dense in
$\LL^2_{\rho^\alpha}(B^d)$.
\item\label{it:density2} $\HZ_\alpha(B^d)$ is dense in
$\LL^2_{\rho^\alpha}(B^d)$.
\end{enumerate}
\end{proposition}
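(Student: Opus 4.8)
The plan is to prove the three parts essentially in the order \ref{it:compEmb} $\Rightarrow$ \ref{it:density} $\Rightarrow$ \ref{it:density2}, since the compact embedding is the substantive analytic fact and the two density statements follow from it (and from density of $\CC^\infty(\overline{B^d})$ already built into the definition of $\HZ_\alpha$).

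For part \ref{it:compEmb}, the key observation is that $\norm{v}_{\WZ_\alpha(B^d)}$ controls the full gradient $\grad v$ in $\LL^2_{\rho^{\alpha+1}}(B^d)$: indeed $\grad v \in [\LL^2_{\rho^{\alpha+1}}(B^d)]^d$ is one of the summands in the definition of the $\WZ_\alpha$ norm. So it suffices to show that the (larger) space $\WZ_\alpha(B^d)$, hence a fortiori its closed subspace $\HZ_\alpha(B^d)$, embeds compactly in $\LL^2_{\rho^\alpha}(B^d)$. Using \eqref{rho-dist-equivalent} this reduces to a known compact embedding between distance-weighted spaces: the space of $v$ with $v \in \LL^2_{\operatorname{dist}^\alpha}$ and $\grad v \in [\LL^2_{\operatorname{dist}^{\alpha+1}}]^d$ embeds compactly into $\LL^2_{\operatorname{dist}^\alpha}(B^d)$. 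This is a Rellich–Kondrachov-type statement for degenerate Sobolev spaces with power weights of the Kufner type; I would cite the monograph \cite{KO} (or the standard references on Sobolev spaces with power weights, e.g.\ weighted-space compactness theorems) for it. Concretely, on any compactly contained subdomain $B_{1-\varepsilon} \compEmb B^d$ the weights are bounded above and below by positive constants, so the classical Rellich theorem gives compactness there; and the contribution from the boundary collar $\{1-\varepsilon < \abs{x} < 1\}$ is made uniformly small because $\int_{\{\rho < \varepsilon\}} \abs{v}^2 \rho^\alpha$ is controlled, after an integration-by-parts / one-dimensional Hardy inequality in the radial variable, by $\varepsilon$ times $\norm{\grad v}_{\LL^2_{\rho^{\alpha+1}}}^2 + \norm{v}_{\LL^2_{\rho^\alpha}}^2$; the power $\alpha+1$ on the gradient weight is exactly what makes this Hardy estimate work for every $\alpha>-1$. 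Combining the two pieces via a diagonal/covering argument yields that bounded sequences in $\WZ_\alpha(B^d)$ have $\LL^2_{\rho^\alpha}(B^d)$-convergent subsequences.

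For part \ref{it:density}, I would argue by contradiction: if the polynomials were not dense in $\LL^2_{\rho^\alpha}(B^d)$, there would be a nonzero $f \in \LL^2_{\rho^\alpha}(B^d)$ orthogonal (in the $\LL^2_{\rho^\alpha}$ inner product) to every polynomial. Since $\bigcup_{k\ge 0}\mathcal{V}_k(\LL^2_{\rho^\alpha}(B^d))$ spans $\poly^d$ (stated in the excerpt), $f$ would then be $\LL^2_{\rho^\alpha}$-orthogonal to every $P^N_{j,\nu}(\rho^\alpha;\placeholder)$, i.e.\ all its Fourier coefficients against this orthogonal basis vanish; Parseval/Bessel together with completeness then forces $f=0$ — but completeness of the polynomial system in $\LL^2_{\rho^\alpha}(B^d)$ is exactly what we want, so to avoid circularity I would instead invoke the classical determinacy result: the measure $\rho^\alpha\,dx$ on $B^d$ is compactly supported, hence its moment problem is determinate and polynomials are dense in the associated $\LL^2$ (this is standard; see the references on orthogonal polynomials on the ball, e.g.\ \cite{DunklXu:2014}). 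Then part \ref{it:density2} is immediate: $\poly^d \subset \CC^\infty(\overline{B^d}) \subset \HZ_\alpha(B^d)$ (the last inclusion by definition \eqref{definition-of-HZ} and because the $\WZ_\alpha$ norm of a smooth-up-to-the-boundary function is finite, $\rho^\alpha$ being integrable), and $\HZ_\alpha(B^d) \hookrightarrow \LL^2_{\rho^\alpha}(B^d)$ continuously, so a dense subset of the former maps to a dense subset of the latter.

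The main obstacle is part \ref{it:compEmb}: one must handle the degeneracy of the weight at $\partial B^d$ carefully, since $\rho^\alpha$ can blow up (for $-1<\alpha<0$) or vanish (for $\alpha>0$) at the boundary, and the gradient is measured in the genuinely different weight $\rho^{\alpha+1}$. The crux is the radial Hardy inequality controlling the boundary-collar mass by the gradient term with the extra power of $\rho$; getting the constant uniform in the collar width (so that it $\to 0$ as the collar shrinks) is what makes the compactness argument go through, and I would either cite \cite{KO} directly or spell out this one-dimensional Hardy estimate. The parts \ref{it:density} and \ref{it:density2} are, by comparison, routine once the right classical determinacy/density fact for compactly supported weights is invoked.
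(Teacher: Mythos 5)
Your proposal follows essentially the paper's route: part (1) is reduced to the known compact embedding of the $\rho^{\alpha+1}$-weighted first-order Sobolev space into $\LL^2_{\rho^\alpha}(B^d)$ (the paper cites Theorem 8.8 of \cite{OG:1989} rather than \cite{KO}, noting that $\HZ_\alpha(B^d)\subseteq\WZ_\alpha(B^d)$ embeds continuously into $\HH^1_{\rho^{\alpha+1}}(B^d)$), and parts (2)--(3) follow from the classical density of polynomials in $\LL^2$ of a compactly supported weight (\cite[Theorem~3.2.18]{DunklXu:2014}) together with the inclusion $\poly^d\subset\CC^\infty(\overline{B^d})\subset\HZ_\alpha(B^d)$. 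The only inaccuracy is quantitative and harmless: for $\alpha\in(-1,0)$ the boundary-collar mass of $v\equiv 1$ scales like $\varepsilon^{\alpha+1}$, not $\varepsilon$, so your Hardy-type tail bound should have modulus $\varepsilon^{\min(1,\alpha+1)}$ (with a logarithmic factor at $\alpha=0$), which still tends to $0$ and leaves the sketched compactness argument intact.
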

\begin{proof} Setting $\Omega = B^d$, $\kappa = 1$, $p = q = 2$, $\beta =
\alpha+1$ and $\alpha = \alpha$ in Theorem 8.8 of \cite{OG:1989} we find that
$\HH^1_{\rho^{\alpha+1}}(B^d) \compEmb \LL^2_{\rho^\alpha}(B^d)$. The
observation that $\HZ_\alpha(B^d) \subseteq \WZ_\alpha(B^d)$ is continuously
embedded in $\HH^1_{\rho^{\alpha+1}}(B^d)$ completes the proof of part
\ref{it:compEmb}.

As $\int_{B^d} \exp(\abs{y}) \rho(y)^\alpha \dd y < \infty$, the hypotheses of
\cite[Theorem~3.2.18]{DunklXu:2014} are satisfied and so its thesis, namely
\ref{it:density}, is obtained. Part \ref{it:density2} is a direct corollary of
part \ref{it:density}.
\end{proof}

We prove that the
polynomials in \eqref{BOVN} are eigenfunctions of \eqref{wrong-EV-problem} as well. Then,
we will appeal to the Hilbert--Schmidt theory to exploit this fact; a
terse formulation of the former lies below followed by some consequences upon the behavior of
the generalized Fourier series with respect to said polynomials.

\begin{lemma}\label{lem:Hilbert-Schmidt} If $\alpha > -1$, then the eigenvalue
problem \eqref{wrong-EV-problem} has a complete system of solutions
(eigenpairs) with a countably infinite set of finite-multiplicity and isolated
eigenvalues which diverge to $+\infty$ and whose associated eigenfunctions
allow for orthogonal expansions of both $\LL^2_{\rho^\alpha}(B^d)$ and
$\HZ_\alpha(B^d)$. These expansions are subject to Parseval's identity.
\end{lemma}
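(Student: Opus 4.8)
The plan is to recognize \eqref{wrong-EV-problem} as a symmetric, coercive (after shifting) variational eigenvalue problem on the Hilbert space $\HZ_\alpha(B^d)$ relative to the pivot space $\LL^2_{\rho^\alpha}(B^d)$, and to invoke the standard spectral theorem for the associated solution operator via the compact embedding already established in \autoref{pro:basic}\ref{it:compEmb}. First I would introduce the bilinear form
\begin{equation*}
a(u,v) := \int_{B^d} \grad u \cdot \overline{\grad v}\, \rho^{\alpha+1} + \sum_{1\leq i<j\leq d} \int_{B^d} (x_j\partial_i u - x_i\partial_j u)\overline{(x_j\partial_i v - x_i\partial_j v)}\,\rho^\alpha
\end{equation*}
on $\HZ_\alpha(B^d) \times \HZ_\alpha(B^d)$, and note that $a(u,v) + \norm{u}_{\LL^2_{\rho^\alpha}(B^d)}\,\overline{\norm{v}}$-type shift, precisely $a(u,v) + (u,v)_{\LL^2_{\rho^\alpha}(B^d)}$, equals $(u,v)_{\WZ_\alpha(B^d)}$ by the very definition \eqref{definition-of-WZ}; hence the shifted form is bounded and coercive on $\HZ_\alpha(B^d)$ with respect to its own norm (which is the $\WZ_\alpha$ norm restricted to $\HZ_\alpha$). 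Thus $\tilde a(u,v) := a(u,v) + (u,v)_{\LL^2_{\rho^\alpha}(B^d)}$ is an inner product on $\HZ_\alpha(B^d)$ equivalent to the ambient one, and it is trivially Hermitian-symmetric.

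Next I would define the solution operator $T \colon \LL^2_{\rho^\alpha}(B^d) \to \HZ_\alpha(B^d)$ by letting $Tf \in \HZ_\alpha(B^d)$ be the unique element with $\tilde a(Tf, v) = (f,v)_{\LL^2_{\rho^\alpha}(B^d)}$ for all $v \in \HZ_\alpha(B^d)$; existence and uniqueness follow from Riesz representation applied to the bounded antilinear functional $v \mapsto (f,v)_{\LL^2_{\rho^\alpha}(B^d)}$ on $(\HZ_\alpha(B^d), \tilde a)$, the boundedness being immediate from $\norm{v}_{\LL^2_{\rho^\alpha}(B^d)} \leq \norm{v}_{\WZ_\alpha(B^d)}$. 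Composing with the compact embedding $\HZ_\alpha(B^d) \compEmb \LL^2_{\rho^\alpha}(B^d)$ of part \ref{it:compEmb}, the operator $T$, viewed as a map $\LL^2_{\rho^\alpha}(B^d) \to \LL^2_{\rho^\alpha}(B^d)$, is compact. It is also self-adjoint on $\LL^2_{\rho^\alpha}(B^d)$: $(Tf,g)_{\LL^2_{\rho^\alpha}(B^d)} = \tilde a(Tg, Tf) = \overline{\tilde a(Tf, Tg)} = \overline{(Tg,f)_{\LL^2_{\rho^\alpha}(B^d)}} = (f, Tg)_{\LL^2_{\rho^\alpha}(B^d)}$, using the symmetry of $\tilde a$. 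Moreover $T$ is positive and injective (if $Tf = 0$ then $(f,v) = 0$ for all $v$ in a dense subspace by \autoref{pro:basic}\ref{it:density2}, so $f = 0$), so $0$ is not an eigenvalue of $T$.

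The Hilbert--Schmidt theorem (spectral theorem for compact self-adjoint operators) then furnishes an orthonormal basis $(e_k)_{k \in \natural}$ of $\LL^2_{\rho^\alpha}(B^d)$ consisting of eigenvectors of $T$ with real eigenvalues $\mu_k \to 0$, each of finite multiplicity, and since $T$ is positive and injective all $\mu_k > 0$; setting $\lambda_k := \mu_k^{-1} - 1$ and unwinding the definition of $T$ shows $(\lambda_k, e_k)$ solves \eqref{wrong-EV-problem}, with $\lambda_k \to +\infty$, each of finite multiplicity, isolated (a bounded sequence of $\mu_k$'s can only accumulate at $0$, so the $\lambda_k$'s have no finite accumulation point). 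That the $e_k$ form a complete orthogonal system in $\LL^2_{\rho^\alpha}(B^d)$ is exactly the statement that Fourier expansions converge there with Parseval's identity; completeness in $\HZ_\alpha(B^d)$ with respect to $\tilde a$ (hence the $\WZ_\alpha$ norm) follows from the general fact that for a coercive symmetric form the eigenvectors, suitably normalized, are also an orthogonal basis of the form domain — concretely, $(\sqrt{\mu_k}\, e_k)_k$ is $\tilde a$-orthonormal and its $\tilde a$-orthogonal complement in $\HZ_\alpha(B^d)$ is $\{0\}$ because any $v$ there would satisfy $(v, e_k)_{\LL^2_{\rho^\alpha}(B^d)} = \mu_k^{-1}\tilde a(v, \sqrt{\mu_k}e_k)\sqrt{\mu_k} = 0$ for all $k$, forcing $v = 0$. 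I expect the only genuine subtlety — everything else being textbook functional analysis — to be the careful identification of $\HZ_\alpha(B^d)$ as a closed subspace of the Hilbert space on which the form is coercive together with the bookkeeping that the polynomials $P^N_{j,\nu}(\rho^\alpha;\placeholder)$ of \eqref{BOVN} are among the eigenfunctions produced; the latter is handled by observing they lie in $\CC^\infty(\overline{B^d}) \subseteq \HZ_\alpha(B^d)$, satisfy \eqref{strong-EV} classically by \eqref{DaiXuForm2}, and hence satisfy \eqref{wrong-EV-problem} after integration by parts (legitimate since $\rho^{\alpha+1}$ vanishes on $\partial B^d$ when $\alpha > -1$, killing boundary terms), so that the span of all of them — dense in $\LL^2_{\rho^\alpha}(B^d)$ by \autoref{pro:basic}\ref{it:density} — is contained in the closed span of the $e_k$, which therefore exhausts the space.
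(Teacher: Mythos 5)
Your proposal is correct and follows essentially the same route as the paper: the paper's proof simply invokes the compact embedding of \autoref{pro:basic} together with the standard spectral (Hilbert--Schmidt) theory of compact self-adjoint operators, and your argument is exactly that machinery written out in detail (shifted coercive form, Riesz solution operator, compactness via \autoref{pro:basic}, spectral theorem, completeness in both $\LL^2_{\rho^\alpha}(B^d)$ and $\HZ_\alpha(B^d)$), with only immaterial bookkeeping slips in the conjugation/normalization constants. The closing identification of the polynomials $P^N_{j,\nu}(\rho^\alpha;\placeholder)$ as eigenfunctions is not needed for this lemma --- that is the content of the subsequent \autoref{lem:allArePolys} --- but it does no harm.
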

\begin{proof}
Because of \autoref{pro:basic} this stems from the spectral theory of compact
self-adjointed operators in Hilbert spaces (see, for example, \cite[Theorem
VI.15]{ReedSimon} and \cite[Section 4.2]{Zeidler}).
\end{proof}

\begin{lemma}\label{lem:allArePolys} Let $\alpha > -1$.
\begin{enumerate}
\item\label{it:eigenpairs} The pairs $(\lambda_{\alpha,N},
P^N_{j,\nu}(\rho^\alpha; \placeholder))$ indexed by $(N, j, \nu) \in \mathcal{I}^d$
(cf.\ \eqref{BOVN} and \eqref{eigenvalues}) form a complete system of
eigenpairs of \eqref{wrong-EV-problem}.
\item\label{it:eigenpairs-consequences} Given $u \in \LL^2_{\rho^\alpha}(B^d)$,
on defining
\begin{equation}\label{coefficient}
(\forall \, (N,j,\nu) \in \mathcal{I}^d) \quad \hat u\psp{\alpha}_{N,j,\nu} := \left. \left\langle u, P^N_{j,\nu}(\rho^\alpha; \placeholder) \right\rangle_{\LL^2_{\rho^\alpha}(B^d)} \right/ h^N_{j,\nu}(\rho^\alpha),
\end{equation}
the series
\begin{subequations}\label{EPC}
\begin{equation}\label{expansion}
\left(\forall\,u\in\LL^2_{\rho^\alpha}(B^d)\right) \quad u = \sum_{(N,j,\nu) \in \mathcal{I}^d} \hat u\psp{\alpha}_{N,j,\nu} \, P^N_{j,\nu}(\rho^\alpha; \placeholder)
\end{equation}
converges in $\LL^2_{\rho^\alpha}(B^d)$ in general and in $\WZ_\alpha(B^d)$ if
$u \in \HZ_\alpha(B^d)$ and there hold the Parseval identities
\begin{equation}\label{Parseval-0}
\left(\forall\,u\in\LL^2_{\rho^\alpha}(B^d)\right) \quad \norm{u}_{\LL^2_{\rho^\alpha}(B^d)}^2
= \sum_{(N,j,\nu) \in \mathcal{I}^d} \abs{\hat u\psp{\alpha}_{N,j,\nu}}^2 \, h^N_{j,\nu}(\rho^\alpha)
\end{equation}
and
\begin{equation}\label{Parseval-1}
\left(\forall\,u\in\HZ_\alpha(B^d)\right) \quad \norm{u}_{\WZ_\alpha(B^d)}^2
= \sum_{(N,j,\nu) \in \mathcal{I}^d} (1 + \lambda_{\alpha, N}) \, \abs{\hat u\psp{\alpha}_{N,j,\nu}}^2 \, h^N_{j,\nu}(\rho^\alpha).
\end{equation}
\end{subequations}
\end{enumerate}
\end{lemma}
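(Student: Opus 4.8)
The plan is to verify that each pair $(\lambda_{\alpha,N}, P^N_{j,\nu}(\rho^\alpha;\placeholder))$ genuinely solves the variational problem \eqref{wrong-EV-problem}, and then to leverage the abstract Hilbert--Schmidt conclusion of \autoref{lem:Hilbert-Schmidt} to upgrade this to a \emph{complete} system, after which part \ref{it:eigenpairs-consequences} follows almost mechanically. First I would establish that $P^N_{j,\nu}(\rho^\alpha;\placeholder) \in \HZ_\alpha(B^d)$: since these are polynomials, they lie in $\CC^\infty(\overline{B^d})$, so they certainly belong to $\WZ_\alpha(B^d)$, and by \eqref{definition-of-HZ} the space $\HZ_\alpha(B^d)$ contains the $\WZ_\alpha$-closure of $\CC^\infty(\overline{B^d})$, hence contains every polynomial. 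Next I would pass from the strong form \eqref{strong-EV} to the weak form: multiply $L\psp{\alpha}(P^N_{j,\nu}) = \lambda_{\alpha,N}\, P^N_{j,\nu}$ by $\overline v\,\rho^\alpha$ for $v \in \CC^\infty(\overline{B^d})$ and integrate over $B^d$. The divergence term $-\div(\rho^{\alpha+1}\grad P)$ integrates by parts to $\int_{B^d}\grad P\cdot\overline{\grad v}\,\rho^{\alpha+1}$ with no boundary contribution because $\rho^{\alpha+1}$ vanishes on $\partial B^d$ (here $\alpha+1 > 0$); the angular-derivative terms $-(x_j\partial_i - x_i\partial_j)^2 P$ integrate by parts against the weight $\rho^\alpha$, and since $x_j\partial_i - x_i\partial_j$ is tangential to the spheres $\{\abs{x}=\text{const}\}$ it annihilates $\rho$, so again the weight passes through the integration by parts without generating extra terms and without a boundary term. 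This yields exactly the bilinear form on the left-hand side of \eqref{wrong-EV-problem} tested against $v$. Finally, extend the identity from $v \in \CC^\infty(\overline{B^d})$ to all $v \in \HZ_\alpha(B^d)$ by density (definition \eqref{definition-of-HZ}) and continuity of both sides in the $\norm{\placeholder}_{\WZ_\alpha(B^d)}$ norm.

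Having shown the listed pairs are eigenpairs, I would argue completeness. \autoref{lem:Hilbert-Schmidt} guarantees the variational problem has \emph{some} complete orthogonal system of eigenpairs. The eigenvalues $\lambda_{\alpha,N} = N(N+d+2\alpha)$ are strictly increasing in $N$ (as $\alpha > -1$ and $d \geq 1$), so distinct total degrees $N$ give distinct eigenvalues; within a fixed $N$, the polynomials $P^N_{j,\nu}(\rho^\alpha;\placeholder)$ over $(j,\nu)\in\mathcal{I}^d_N$ form a basis of $\mathcal{V}_N(\LL^2_{\rho^\alpha}(B^d))$ and are $\LL^2_{\rho^\alpha}$-orthogonal. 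Since $\bigcup_{k\geq 0}\mathcal{V}_k(\LL^2_{\rho^\alpha}(B^d))$ spans all polynomials (stated in \autoref{sec:PEEUB}), and polynomials are dense in $\LL^2_{\rho^\alpha}(B^d)$ by \autoref{pro:basic}\ref{it:density}, the closed span of $\{P^N_{j,\nu}\}$ is all of $\LL^2_{\rho^\alpha}(B^d)$; hence this orthogonal family is itself a complete orthogonal system, which (being eigenfunctions) must exhaust the eigenspaces of the abstract system from \autoref{lem:Hilbert-Schmidt}. That gives part \ref{it:eigenpairs}.

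For part \ref{it:eigenpairs-consequences}, the expansion \eqref{expansion} and the Parseval identity \eqref{Parseval-0} are just the classical generalized-Fourier-series statement for the complete orthogonal system $\{P^N_{j,\nu}(\rho^\alpha;\placeholder)\}_{(N,j,\nu)\in\mathcal{I}^d}$ in the Hilbert space $\LL^2_{\rho^\alpha}(B^d)$, with the coefficients \eqref{coefficient} being the usual normalized projections and $h^N_{j,\nu}(\rho^\alpha)$ their squared norms from \eqref{squaredNorm}. For the $\HZ_\alpha(B^d)$ statements, I would use that the eigenvalue problem \eqref{wrong-EV-problem} pairs the $\WZ_\alpha$-inner product (up to the $\LL^2_{\rho^\alpha}$ part) with the $\LL^2_{\rho^\alpha}$-inner product: testing \eqref{wrong-EV-problem} for the eigenfunction $P^N_{j,\nu}$ against $v = P^{N'}_{j',\nu'}$ shows that the eigenfunctions are $\WZ_\alpha$-orthogonal as well, with $\norm{P^N_{j,\nu}(\rho^\alpha;\placeholder)}_{\WZ_\alpha(B^d)}^2 = (1+\lambda_{\alpha,N})\,h^N_{j,\nu}(\rho^\alpha)$. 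The Hilbert--Schmidt theory (\autoref{lem:Hilbert-Schmidt}) already tells us these eigenfunctions form an orthogonal basis of $\HZ_\alpha(B^d)$ and that Parseval holds there; rescaling by the computed $\WZ_\alpha$-norms yields \eqref{Parseval-1}, and convergence of \eqref{expansion} in $\WZ_\alpha(B^d)$ for $u \in \HZ_\alpha(B^d)$ follows since $\HZ_\alpha(B^d)$ is closed in $\WZ_\alpha(B^d)$ and the partial sums are Cauchy in that norm by \eqref{Parseval-1}.

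The main obstacle I anticipate is the integration-by-parts step: one must be careful that no boundary terms arise despite the weights $\rho^\alpha$ (which blows up near $\partial B^d$ when $-1 < \alpha < 0$) and must justify that the formal manipulation is legitimate in the weighted setting. The cleanest route is to do the computation first for test functions $v \in \CC^\infty(\overline{B^d})$ — where everything is classical and the vanishing of $\rho^{\alpha+1}$ on the boundary plus the tangentiality of $x_j\partial_i - x_i\partial_j$ kill all boundary contributions — and only afterwards invoke density and the continuity of the (manifestly $\WZ_\alpha$-bounded) bilinear form to reach general $v \in \HZ_\alpha(B^d)$.
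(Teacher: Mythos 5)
Your proposal is correct and follows essentially the same route as the paper: verify the weak eigenpair property by integrating the strong equation against smooth test functions (with the boundary terms killed by the vanishing of $\rho^{\alpha+1}$ and of $x_j\hat\nu_i - x_i\hat\nu_j$ on $\partial B^d$, and the angular operator annihilating $\rho^\alpha$), extend to all of $\HZ_\alpha(B^d)$ by the density built into \eqref{definition-of-HZ}, deduce completeness from the density of polynomials (\autoref{pro:basic}) together with the spanning of the spaces $\mathcal{V}_N(\LL^2_{\rho^\alpha}(B^d))$, and then read off the expansions and Parseval identities from the Hilbert--Schmidt framework of \autoref{lem:Hilbert-Schmidt}. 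The extra details you supply (distinctness of eigenvalues, the explicit $\WZ_\alpha$-norm $(1+\lambda_{\alpha,N})h^N_{j,\nu}(\rho^\alpha)$ of the eigenfunctions) are consistent with, and merely make explicit, what the paper leaves to the abstract spectral theory.
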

\begin{proof} Let us abbreviate $\lambda = \lambda_{\alpha, N}$ and $P =
P^N_{j,\nu}(\rho^\alpha;\placeholder)$ for the moment. Then, if $v \in
\CC^\infty(\overline{B^d})$,
\begin{multline}\label{weak-to-strong}
\lambda \langle P, v \rangle_{\LL^2_{\rho^\alpha}(B^d)}
= \langle L\psp{\alpha}(P), v \rangle_{\LL^2_{\rho^\alpha}(B^d)}\\
= -\int_{B^d} \div\left(\rho^{\alpha+1} \grad P\right) \overline{v} - \sum\limits_{1\leq i<j\leq d} \int_{B^d} \left((x_j \partial_i - x_i \partial_j)^2 P\right) \overline{v} \, \rho^\alpha\\
= \int_{B^d} \rho^{\alpha+1} \grad P \cdot \overline{\grad v} + \sum\limits_{1\leq i<j\leq d} \int_{B^d} (x_j \partial_i P - x_i \partial_j P) \overline{(x_j \partial_i v - x_i \partial_j v)} \rho^\alpha,
\end{multline}
where the first equality comes from the fact that $(\lambda, P)$ is an
eigenpair of \eqref{strong-EV}, the second is an immediate consequence of the
definition of $L\psp{\alpha}$ in the same equation and the third comes from
applying the divergence theorem and using the facts that both $\rho^{\alpha+1}$
and $x_j \hat\nu_i - x_i \hat\nu_j$ (here $\hat\nu$ is the unit outward vector defined on
$\partial B^d$) vanish at $\partial B^d$ and that $(x_j \partial_i - x_i
\partial_j) \rho^\alpha \equiv 0$. Now, as per definition
\eqref{definition-of-HZ} $\CC^\infty(\overline{B^d})$ is dense in
$\HZ_\alpha(B^d)$, the identity \eqref{weak-to-strong} is also valid for all
$v$ in the latter space. Therefore, $P$ is an eigenvalue of
\eqref{wrong-EV-problem} with eigenvalue $\lambda$. Incidentally, this
justifies calling \eqref{wrong-EV-problem} the weak form of \eqref{strong-EV}.

The fact that the polynomials $P^N_{j,\nu}(\rho^\alpha;\placeholder)$ form a
\emph{complete} system of eigenfunctions of \eqref{wrong-EV-problem} is then a
consequence of part \ref{it:density} of \autoref{pro:basic} and the fact that
the $P^N_{j,\nu}(\rho^\alpha,\placeholder)$, for fixed $N$, span each space
$\mathcal{V}_N(\LL^2_{\rho^\alpha}(B^d))$, which, in turn, collectively span
the space of all polynomials defined on $B^d$; hence part \ref{it:eigenpairs}.
Having identified a complete system of eigenpairs we can put the orthogonal
expansions and Parseval's identity alluded to in \autoref{lem:Hilbert-Schmidt}
in the form given in \eqref{EPC}, thus giving part
\ref{it:eigenpairs-consequences}.
\end{proof}

\begin{proposition}\label{pro:chainIngredient} Let $\alpha > -1$. Then,
\begin{enumerate}
\item\label{it:map} For every $k \in
\natural_0$, $L\psp{\alpha}$ is a continuous map between
$\HH^{k+2}_{\rho^\alpha}(B^d)$ and $\HH^k_{\rho^\alpha}(B^d)$.
\item\label{it:selfAdjoint} For every $u, v \in \HH^2_{\rho^\alpha}(B^d)$,
\begin{equation*}
\langle L\psp{\alpha}(u), v\rangle_{\LL^2_{\rho^\alpha}(B^d)} = \langle u, L\psp{\alpha}(v)\rangle_{\LL^2_{\rho^\alpha}(B^d)}.
\end{equation*}
\item\label{it:W12-inclusion} $\HH^1_{\rho^\alpha}(B^d) \subseteq
\HZ_\alpha(B^d)$ with continuous embedding.
\end{enumerate}
\end{proposition}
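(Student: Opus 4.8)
The plan is to treat the three items in turn; the first two amount to bookkeeping around a single integration by parts, while the third is where the real work lies. For part~\ref{it:map} I would start by rewriting $L\psp{\alpha}$ in non-divergence form with polynomial coefficients: since $\grad\rho = -2x$, expanding \eqref{strong-EV} gives
\begin{equation*}
L\psp{\alpha}(u) = -\rho\,\lapl u + 2(\alpha+1)\,x\cdot\grad u - \sum_{1\leq i<j\leq d}(x_j\partial_i - x_i\partial_j)^2 u ,
\end{equation*}
and expanding the angular operators exhibits $L\psp{\alpha}$ as a finite linear combination $\sum_{\abs{\beta}\leq 2} q_\beta\,\partial_\beta$ with each $q_\beta$ a polynomial (of degree at most $2$). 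Continuity $\HH^{k+2}_{\rho^\alpha}(B^d)\to\HH^k_{\rho^\alpha}(B^d)$ then follows by composing two elementary observations: each $\partial_\beta$ maps $\HH^{k+2}_{\rho^\alpha}(B^d)$ continuously into $\HH^{k+2-\abs{\beta}}_{\rho^\alpha}(B^d)\subseteq\HH^k_{\rho^\alpha}(B^d)$ (discard the extra seminorms), and multiplication by a fixed polynomial $q$ is continuous on every $\HH^k_{\rho^\alpha}(B^d)$ because Leibniz's rule writes $\partial_\gamma(qw)$, $\abs{\gamma}\leq k$, as a combination of products $(\partial_\eta q)\,(\partial_{\gamma-\eta}w)$ in which every $\partial_\eta q$ is bounded on the bounded set $B^d$.

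For part~\ref{it:selfAdjoint} it suffices to show that
\begin{multline*}
\langle L\psp{\alpha}(u),v\rangle_{\LL^2_{\rho^\alpha}(B^d)}\\
= \int_{B^d}\rho^{\alpha+1}\,\grad u\cdot\overline{\grad v} + \sum_{1\leq i<j\leq d}\int_{B^d}(x_j\partial_i u - x_i\partial_j u)\,\overline{(x_j\partial_i v - x_i\partial_j v)}\,\rho^\alpha
\end{multline*}
for all $u,v\in\HH^2_{\rho^\alpha}(B^d)$, since the right-hand side is the Hermitian bilinear form on the left of \eqref{wrong-EV-problem}, so that $\langle L\psp{\alpha}(u),v\rangle = \langle u,L\psp{\alpha}(v)\rangle$ is then immediate. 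I would prove this identity by running the computation \eqref{weak-to-strong} on the exhausting balls $B_r := \{\abs{x}<r\}$, $r\uparrow 1$, where the weight is bounded above and below so that $u\in\HH^2(B_r)$, $v\in\HH^1(B_r)$ and the divergence theorem applies (through density of $\CC^\infty(\overline{B_r})$). The angular terms produce no boundary integral at all, because each field $x\mapsto x_j e_i - x_i e_j$ is tangent to every centred sphere, is divergence free, and annihilates $\rho^\alpha$; the lone boundary term is $\int_{\partial B_r}\rho^{\alpha+1}(\partial_\nu u)\,\overline v$, whose square is at most, by Cauchy--Schwarz on $\partial B_r$, the product of the spherical integrals of $\rho^{\alpha+1}\abs{\grad u}^2$ and $\rho^{\alpha+1}\abs{v}^2$. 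By \eqref{polarIntegral} (and $\rho^{\alpha+1}\leq\rho^\alpha$) both of these are integrable in $r$ over $(0,1)$, so the boundary term vanishes along some sequence $r_l\uparrow 1$; dominated convergence---using part~\ref{it:map} to place $L\psp{\alpha}(u)$ in $\LL^2_{\rho^\alpha}(B^d)$---then passes the interior integrals to the limit.

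For part~\ref{it:W12-inclusion}, the bound $\norm{v}_{\WZ_\alpha(B^d)}\leq C\norm{v}_{\HH^1_{\rho^\alpha}(B^d)}$ is immediate from $0<\rho\leq 1$ (hence $\rho^{\alpha+1}\leq\rho^\alpha$) and $\abs{x_j\partial_i v - x_i\partial_j v}\leq 2\abs{\grad v}$; as $\HZ_\alpha(B^d)$ is a closed subspace of $\WZ_\alpha(B^d)$, it therefore remains only to prove the set inclusion $\HH^1_{\rho^\alpha}(B^d)\subseteq\HZ_\alpha(B^d)$, i.e.\ that each $v\in\HH^1_{\rho^\alpha}(B^d)$ is a $\WZ_\alpha(B^d)$-limit of $\CC^\infty(\overline{B^d})$ functions. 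When $\alpha\geq 0$ this is free because $\HZ_\alpha(B^d)=\WZ_\alpha(B^d)$ by \autoref{cor:WZ-density-nonNeg}. When $-1<\alpha<0$, $\rho^\alpha$ is by \eqref{rho-dist-equivalent} comparable near $\partial B^d$ to $\operatorname{dist}(\cdot,\partial B^d)^\alpha$ with exponent in $(-1,0)\subset(-1,1)$; reflecting $v$ across $\mathbb{S}^{d-1}$ through the inversion $x\mapsto x/\abs{x}^2$ and truncating with a radial cutoff produces an extension $\tilde v\in\HH^1_{\tilde w}(\Real^d)$, where $\tilde w$ equals $\rho^\alpha$ on $B^d$, equals $\abs{1-\abs{x}^2}^\alpha$ just outside, and is smooth and positive elsewhere---hence a Muckenhoupt $A_2$ weight on $\Real^d$. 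Mollification then gives $\tilde v\ast\phi_\delta\to\tilde v$ in $\HH^1_{\tilde w}(\Real^d)$ (the functions $\tilde v\ast\phi_\delta$ and $(\grad\tilde v)\ast\phi_\delta$ are dominated by Hardy--Littlewood maximal functions, which lie in $\LL^2_{\tilde w}$ by Muckenhoupt's theorem, so dominated convergence applies), and restricting the $\CC^\infty(\overline{B^d})$ functions $\tilde v\ast\phi_\delta$ to $\overline{B^d}$ furnishes the required approximants.

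The step I expect to be the main obstacle is exactly this density statement for $-1<\alpha<0$. The naive approximation of $v$ by its inward dilations $v(\lambda\,\cdot)$, $\lambda\uparrow 1$, fails because $w\mapsto w(\lambda\,\cdot)$ is not a uniformly bounded family on $\LL^2_{\rho^\alpha}(B^d)$ when $\alpha<0$---an inward dilation drags mass toward $\partial B^d$, where the weight blows up---so one is forced through a genuinely boundary-adapted construction, either the reflection-plus-Muckenhoupt-mollification above or, equivalently, a one-dimensional Hardy--Sobolev estimate in the direction normal to $\partial B^d$ combined with mollification along $\mathbb{S}^{d-1}$. Parts~\ref{it:map} and \ref{it:selfAdjoint} are routine by comparison.
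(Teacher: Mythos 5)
Your part~\ref{it:map} is essentially the paper's argument (expand $L\psp{\alpha}$ into second-order form with polynomial coefficients and use that these and all their derivatives are bounded on $B^d$). For parts~\ref{it:selfAdjoint} and \ref{it:W12-inclusion}, however, you take a genuinely different route. The paper's proof of both rests on one citation: $\CC^\infty(\overline{B^d})$ is dense in $\HH^m_{\rho^\alpha}(B^d)$ for \emph{every} $\alpha>-1$ (Kufner, Remark~11.12(iii)). Self-adjointness is then checked for smooth $u,v$ by the divergence theorem (the boundary terms vanish because $\rho^{\alpha+1}$ and $x_j\hat\nu_i-x_i\hat\nu_j$ vanish on $\partial B^d$, and $(x_j\partial_i-x_i\partial_j)\rho^\alpha=0$) and extended by density; part~\ref{it:W12-inclusion} is immediate because a $\CC^\infty(\overline{B^d})$-sequence converging in $\HH^1_{\rho^\alpha}(B^d)$ converges in the weaker $\WZ_\alpha(B^d)$ norm, hence its limit lies in $\HZ_\alpha(B^d)$ by definition \eqref{definition-of-HZ}. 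No case split on the sign of $\alpha$, no dilation and no reflection is needed, because density is only required in the uniformly weighted space $\HH^1_{\rho^\alpha}(B^d)$, not in $\WZ_\alpha(B^d)$ itself; your (correct) observation that inward dilations are not uniformly bounded on $\LL^2_{\rho^\alpha}(B^d)$ for $\alpha<0$ is therefore moot for this proposition. Your substitutes are viable: the exhaustion-by-$B_r$ argument for part~\ref{it:selfAdjoint} avoids the density citation altogether, and your part~\ref{it:W12-inclusion} re-proves the needed density by hand (\autoref{cor:WZ-density-nonNeg} for $\alpha\geq 0$; Kelvin reflection plus $A_2$-weighted mollification for $\alpha\in(-1,0)$, where indeed $\HH^1_{\rho^\alpha}(B^d)\subseteq\HH^1(B^d)$ so the reflection is unproblematic). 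What the paper's route buys is brevity and uniformity in $\alpha$; what yours buys is independence from the cited density theorem, at the price of redoing a special case of it.

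One step needs repair. In part~\ref{it:selfAdjoint} you conclude that the boundary term $h(r)=\int_{\partial B_r}\rho^{\alpha+1}(\partial_\nu u)\overline v\,\dd\sigma$ vanishes along a sequence $r_l\uparrow 1$ \emph{because} the two spherical integrals dominating $h(r)^2$ are integrable in $r$. That implication is false in general: with $g_1(r)=(1-r)^{-1/2}$ and $g_2(r)=(1-r)^{1/2}$ both are integrable on $(0,1)$ yet $g_1g_2\equiv 1$. The conclusion is nevertheless true here, but only because of the extra factor of $\rho$: writing $F_1(r)=\int_{\partial B_r}\rho^\alpha\abs{\grad u}^2\dd\sigma$ and $F_2(r)=\int_{\partial B_r}\rho^\alpha\abs{v}^2\dd\sigma$, Cauchy--Schwarz gives $\abs{h(r)}\leq(1-r^2)\,F_1(r)^{1/2}F_2(r)^{1/2}\leq\tfrac{1}{2}(1-r^2)\bigl(F_1(r)+F_2(r)\bigr)$, and since $F_1+F_2$ is integrable over $(0,1)$ one has $\liminf_{r\to 1^-}(1-r)\bigl(F_1(r)+F_2(r)\bigr)=0$, which supplies the required sequence. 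With that adjustment (and the routine verifications left implicit in your $\alpha\in(-1,0)$ sketch, notably the $A_2$ property of the extended weight), your argument goes through.
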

\begin{proof} Expanding the terms in \eqref{strong-EV} we find that
\begin{equation*}
L\psp{\alpha}(u) = -\rho \, \lapl \varphi + (2\alpha + 1 + d) \, x \cdot \grad \varphi - \sum_{1\leq i<j\leq d} (x_i^2 \partial_j^2 \varphi + x_j^2 \partial_i^2 \varphi - 2 x_i x_j \partial_i \partial_j \varphi).
\end{equation*}
As the coefficients $\rho$, $x$, $x_i^2$, etc.\ above have $\LL^\infty(B^d)$
derivatives of all orders, part \ref{it:map} becomes readily apparent.

By the divergence theorem and the fact that $(x_j \partial_i - x_i \partial_j)(\rho^\alpha) = 0$ part \ref{it:selfAdjoint} is easily seen to be true
if both $u$ and $v$ lie in $\CC^\infty(\overline{B^d})$. As the latter is dense
in $\HH^2_{\rho^\alpha}(B^d)$ (cf.\
\cite[Remark~11.12.(iii)]{Kufner:1985}), the result extends to $u$ and $v$ in
$\HH^2_{\rho^\alpha}(B^d)$ as well.

Let $u \in \HH^1_{\rho^\alpha}(B^d)$. Again from
\cite[Remark~11.12.(iii)]{Kufner:1985} we know that there is a sequence
$(u_n)_{n \in \natural}$ of $\CC^\infty(\overline{B^d})$ functions converging
to $u$ in the norm of that space. As the $\WZ_\alpha(B^d)$ norm is, up to a
positive constant, bounded by the norm of $\HH^1_{\rho^\alpha}(B^d)$, $\lim_{n \to
\infty} u_n = u$ in $\WZ_\alpha(B^d)$ as well, whence per the definition
\eqref{definition-of-HZ}, $u \in \HZ_\alpha(B^d)$ and we have part
\ref{it:W12-inclusion}.
\end{proof}

\begin{lemma}\label{lem:sequences} If $\alpha > -1$ and $k \in \natural_0$,
there exists a positive constant $C = C(\alpha, d, k)$ such that
\begin{equation*}
\left(\forall\, u \in \HH^k_{\rho^\alpha}(B^d)\right) \qquad \sum_{(N,j,\nu) \in \mathcal{I}^d} \left(\lambda_{\alpha,N}\right)^k \abs{\hat u\psp{\alpha}_{N,j,\nu}}^2 h^N_{j,\nu}(\rho^\alpha)
\leq C \norm{u}_{\HH^k_{\rho^\alpha}(B^d)}^2.
\end{equation*}
\end{lemma}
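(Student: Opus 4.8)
The plan is to induct on $k$. The cases $k=0$ and $k=1$ are already contained in the Parseval identities \eqref{Parseval-0} and \eqref{Parseval-1} of \autoref{lem:allArePolys}: for $k=0$ the asserted inequality is the equality \eqref{Parseval-0}, and for $k=1$ one uses \ref{it:W12-inclusion} of \autoref{pro:chainIngredient} to place $u \in \HH^1_{\rho^\alpha}(B^d)$ inside $\HZ_\alpha(B^d)$, whence \eqref{Parseval-1} gives the bound with the constant coming from the continuity of the embedding $\HH^1_{\rho^\alpha}(B^d) \hookrightarrow \HZ_\alpha(B^d)$ (and using $\lambda_{\alpha,N} \leq 1 + \lambda_{\alpha,N}$).

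For the inductive step I would argue differently according to the parity of $k$. If $k = 2m$ is even, I iterate the operator $L\psp{\alpha}$. By \ref{it:map} of \autoref{pro:chainIngredient}, $(L\psp{\alpha})^m$ maps $\HH^{2m}_{\rho^\alpha}(B^d)$ continuously into $\LL^2_{\rho^\alpha}(B^d)$. Since each $P^N_{j,\nu}(\rho^\alpha;\placeholder)$ is an eigenfunction of the strong problem \eqref{strong-EV} with eigenvalue $\lambda_{\alpha,N}$, and by \ref{it:selfAdjoint} of \autoref{pro:chainIngredient} $L\psp{\alpha}$ is symmetric on $\HH^2_{\rho^\alpha}(B^d)$, the Fourier--Zernike coefficient of $(L\psp{\alpha})^m u$ is $(\lambda_{\alpha,N})^m \hat u\psp{\alpha}_{N,j,\nu}$; one checks this by pairing $(L\psp{\alpha})^m u$ against $P^N_{j,\nu}(\rho^\alpha;\placeholder)$ and moving the operator onto the (smooth) eigenfunction $m$ times, which is legitimate because $u \in \HH^{2m}_{\rho^\alpha}(B^d)$ and the polynomial lies in $\CC^\infty(\overline{B^d})$. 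Applying \eqref{Parseval-0} to $(L\psp{\alpha})^m u$ then yields
\[
\sum_{(N,j,\nu) \in \mathcal{I}^d} (\lambda_{\alpha,N})^{2m} \abs{\hat u\psp{\alpha}_{N,j,\nu}}^2 h^N_{j,\nu}(\rho^\alpha) = \norm{(L\psp{\alpha})^m u}_{\LL^2_{\rho^\alpha}(B^d)}^2 \leq C \norm{u}_{\HH^{2m}_{\rho^\alpha}(B^d)}^2,
\]
the last step by $m$-fold application of \ref{it:map}. If $k = 2m+1$ is odd, I apply the even case to $(L\psp{\alpha})^m u \in \HH^1_{\rho^\alpha}(B^d)$ (using \ref{it:map} to see that $(L\psp{\alpha})^m$ sends $\HH^{2m+1}_{\rho^\alpha}(B^d)$ continuously into $\HH^1_{\rho^\alpha}(B^d)$), combine the resulting coefficient identity $\widehat{((L\psp{\alpha})^m u)}\psp{\alpha}_{N,j,\nu} = (\lambda_{\alpha,N})^m \hat u\psp{\alpha}_{N,j,\nu}$ with the already-established $k=1$ case, and note $(\lambda_{\alpha,N})^m \cdot \lambda_{\alpha,N} = (\lambda_{\alpha,N})^{2m+1}$.

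The main obstacle I anticipate is the rigorous justification of the coefficient identity $\widehat{((L\psp{\alpha})^m u)}\psp{\alpha}_{N,j,\nu} = (\lambda_{\alpha,N})^m \hat u\psp{\alpha}_{N,j,\nu}$ for $u$ of finite Sobolev regularity, rather than smooth $u$. The symmetry statement \ref{it:selfAdjoint} is given only on $\HH^2_{\rho^\alpha}(B^d)$, so to iterate it $m$ times one must check at each stage that the intermediate function $(L\psp{\alpha})^i u$ indeed lies in $\HH^2_{\rho^\alpha}(B^d)$ (equivalently $\HH^{2m-2i}_{\rho^\alpha}(B^d) \subseteq \HH^2_{\rho^\alpha}(B^d)$ for $i \leq m-1$), which follows from $k = 2m \geq 2$ together with \ref{it:map}; the pairing against the eigenfunction then transfers all $m$ copies of $L\psp{\alpha}$ onto $P^N_{j,\nu}(\rho^\alpha;\placeholder)$, each transfer producing one factor $\lambda_{\alpha,N}$ via \eqref{strong-EV}. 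All constants produced depend only on $\alpha$, $d$ and $k$ as required, being compositions of the embedding constants from \autoref{pro:chainIngredient}.
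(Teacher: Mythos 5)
Your proposal is correct and follows essentially the same route as the paper: split according to the parity of $k$, apply $(L\psp{\alpha})^{\floor{k/2}}$ using the mapping property and symmetry from \autoref{pro:chainIngredient} to transfer the operator onto the polynomial eigenfunctions (so the coefficients pick up factors of $\lambda_{\alpha,N}$), and conclude with Parseval's identity \eqref{Parseval-0} in the even case and, via the embedding of part \ref{it:W12-inclusion} into $\HZ_\alpha(B^d)$, with \eqref{Parseval-1} in the odd case. The only difference is presentational: you phrase it as an induction and spell out the justification of the coefficient identity, which the paper leaves implicit.
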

\begin{proof} Let us suppose first that $k$ is even. Then, from parts
\ref{it:map} and \ref{it:selfAdjoint} of \autoref{pro:chainIngredient}, the
fact that the $P^N_{j,\nu}(\rho^\alpha; \placeholder)$ are polynomials (and
thus members of $\HH^k_{\rho^\alpha}(B^d)$) and eigenfunctions of
$L\psp{\alpha}$ and the Parseval identity \eqref{Parseval-0},
\begin{equation*}
C_1 \norm{u}_{\HH^k_{\rho^\alpha}(B^d)}^2
\geq \norm{ (L\psp{\alpha})^{k/2}(u) }_{\LL^2_{\rho^\alpha}(B^d)}^2
 = \sum_{(N,j,\nu) \in \mathcal{I}^d} \abs{ \left(\lambda_{\alpha, N}\right)^{k/2} \hat u\psp{\alpha}_{N, j, \nu} }^2 h^N_{j,\nu}(\rho^\alpha).
\end{equation*}
Now, if $k$ is odd, similarly as above but this time also using part
\ref{it:W12-inclusion} of \autoref{pro:chainIngredient} and the Parseval identity
\eqref{Parseval-1},
\begin{multline*}
C_2 \norm{u}_{\HH^k_{\rho^\alpha}(B^d)}^2
\geq \norm{ (L\psp{\alpha})^{\frac{k-1}{2}}(u) }_{\HH^1_{\rho^\alpha}(B^d)}^2\\
\geq C_3 \left[ \norm{ (L\psp{\alpha})^{\frac{k-1}{2}}(u) }_{\WZ_\alpha(B^d)}^2 - \norm{ (L\psp{\alpha})^{\frac{k-1}{2}}(u) }_{\LL^2_{\rho^\alpha}(B^d)}^2 \right]\\
= C_3 \sum_{(N,j,\nu) \in \mathcal{I}^d} \lambda_{\alpha,N} \abs{ \left(\lambda_{\alpha,N}\right)^{\frac{k-1}{2}} \hat u\psp{\alpha}_{N,j,\nu} }^2 h^N_{j,\nu}(\rho^\alpha).
\end{multline*}
\end{proof}

Given $N \in \natural_0$ let $\proj\psp{\alpha}_N \colon
\LL^2_{\rho^\alpha}(B^d) \to \poly^d_N$ be the orthogonal projection from
$\LL^2_{\rho^\alpha}(B^d)$ onto $\poly^d_N$. On account of \eqref{expansion} in
\autoref{lem:allArePolys} we can express it as a truncation operator:
\begin{equation}\label{projection}
(\forall\,u\in\LL^2_{\rho^\alpha}(B^d)) \quad \proj\psp{\alpha}_N(u) = \sum_{n=0}^N \sum_{(j,\nu) \in \mathcal{I}^d_n} \hat u\psp{\alpha}_{n,j,\nu} \, P^n_{j,\nu}(\rho^\alpha; \placeholder).
\end{equation}

\begin{corollary}\label{cor:L2-approx} If $\alpha > -1$ and $k \in \natural_0$
there exists a positive constant $C = C(\alpha, d, k)$ such that
\begin{equation*}
\left(\forall\, u \in \HH^k_{\rho^\alpha}(B^d)\right) \quad (\forall\, N \in \natural_0) \quad
\norm[n]{u - \proj\psp{\alpha}_N(u)}_{\LL^2_{\rho^\alpha}(B^d)} \leq C (N+1)^{-k} \norm{u}_{\HH^k_{\rho^\alpha}(B^d)}.
\end{equation*}
\end{corollary}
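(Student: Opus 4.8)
The plan is to combine the Parseval identity \eqref{Parseval-0} with the estimate of \autoref{lem:sequences}, exploiting the fact that the truncation operator $\proj\psp{\alpha}_N$ removes precisely those Fourier--Zernike modes whose eigenvalues $\lambda_{\alpha,n}$ are large (namely $n > N$). The key observation is the elementary comparison: for $n \geq N+1$ one has $\lambda_{\alpha,n} = n(n+d+2\alpha) \geq (N+1)(N+1+d+2\alpha) \geq (N+1)^2$ when $\alpha > -1$ (so that $d + 2\alpha > d - 2 \geq -1$, hence $\lambda_{\alpha,n} \geq (N+1)^2$ for $d \geq 1$; in the degenerate possibility one may instead just use $\lambda_{\alpha,n} \geq c\,(N+1)^2$ with $c = c(\alpha,d) > 0$, which suffices). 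Equivalently, $(N+1)^{2k} \leq (\lambda_{\alpha,n})^k$ for all $n \geq N+1$.

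Concretely, I would first write, using \eqref{projection} and the Parseval identity \eqref{Parseval-0} applied to $u - \proj\psp{\alpha}_N(u)$, whose only nonzero coefficients are those with $n \geq N+1$,
\begin{equation*}
\norm[n]{u - \proj\psp{\alpha}_N(u)}_{\LL^2_{\rho^\alpha}(B^d)}^2
= \sum_{n=N+1}^\infty \sum_{(j,\nu) \in \mathcal{I}^d_n} \abs{\hat u\psp{\alpha}_{n,j,\nu}}^2 h^n_{j,\nu}(\rho^\alpha).
\end{equation*}
Then I would insert the factor $1 \leq (\lambda_{\alpha,n})^k / (N+1)^{2k}$, valid termwise on the range $n \geq N+1$ (after fixing the constant $c$ above, the bound reads $1 \leq (\lambda_{\alpha,n})^k / (c^k (N+1)^{2k})$), obtaining
\begin{equation*}
\norm[n]{u - \proj\psp{\alpha}_N(u)}_{\LL^2_{\rho^\alpha}(B^d)}^2
\leq \frac{1}{c^k (N+1)^{2k}} \sum_{n=N+1}^\infty \sum_{(j,\nu) \in \mathcal{I}^d_n} (\lambda_{\alpha,n})^k \abs{\hat u\psp{\alpha}_{n,j,\nu}}^2 h^n_{j,\nu}(\rho^\alpha),
\end{equation*}
and finally I would bound the sum on the right by the full sum over $\mathcal{I}^d$ and apply \autoref{lem:sequences} to conclude
\begin{equation*}
\norm[n]{u - \proj\psp{\alpha}_N(u)}_{\LL^2_{\rho^\alpha}(B^d)}^2 \leq \frac{C}{c^k (N+1)^{2k}} \norm{u}_{\HH^k_{\rho^\alpha}(B^d)}^2,
\end{equation*}
which after taking square roots and absorbing constants is the claimed bound, except that the exponent is $(N+1)^{-k}$ and we have just derived $(N+1)^{-k}$ from a factor $(N+1)^{-2k}$ paired with $(\lambda_{\alpha,n})^k \sim n^{2k}$ — so in fact one should be slightly more careful: \autoref{lem:sequences} carries $(\lambda_{\alpha,n})^k$, and $\lambda_{\alpha,n} \asymp n^2$, so the correct pairing is $1 \leq (\lambda_{\alpha,n})^{k}/\lambda_{\alpha,N+1}^{k}$ and $\lambda_{\alpha,N+1} \geq c(N+1)^2$, which indeed yields the $(N+1)^{-2k}\cdot$ (nothing) — matching the statement's $(N+1)^{-k}$ only if \autoref{lem:sequences} is read with exponent $k$ meaning "$k$ powers of $\lambda \asymp n^2$", i.e. $2k$ derivative-orders' worth; since the statement of \autoref{lem:sequences} is indexed by $k \in \natural_0$ with $\HH^k$ on the right and $(\lambda_{\alpha,N})^k$ on the left, and $\lambda_{\alpha,N} \asymp N^2$, the net gain is $N^{-2 \cdot (k/2)} = N^{-k}$ after reconciling that $\HH^k$ regularity buys $(\lambda)^{k}$ which is $N^{2k}$ — so the clean route is simply: pair $1 \leq (\lambda_{\alpha,n}/\lambda_{\alpha,N+1})^{\lceil k/2 \rceil}$ is not needed; instead pair $1 \le (\lambda_{\alpha,n})^{k}/\lambda_{\alpha,N+1}^{k}$ and use $\lambda_{\alpha,N+1}^{k} \ge c^{k}(N+1)^{2k} \ge c^{k}(N+1)^{k}$, trivially giving the (slightly lossy but sufficient) stated bound $(N+1)^{-k}$.

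I expect the only genuinely delicate point to be the bookkeeping just described: making sure the power of $(N+1)$ extracted from $\lambda_{\alpha,N+1}$ matches the exponent $-k$ in the statement (it does, with room to spare, since $\lambda_{\alpha,N+1} \geq c(N+1)^2 \geq c(N+1)$), and handling the edge case $N = 0$ together with the constant $c = c(\alpha,d)$ coming from $\lambda_{\alpha,n} = n(n+d+2\alpha) \geq c\, n^2$ uniformly in $n \geq 1$, which holds because $\alpha > -1$ and $d \geq 1$ force $d + 2\alpha > -1$, hence $n + d + 2\alpha \geq n \cdot \tfrac{1}{2}$ for, say, $n$ large and a direct check for small $n$. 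Everything else is a two-line manipulation of the already-established Parseval identity and the already-established \autoref{lem:sequences}; there is no real analytic obstacle, only the need to state the monotonicity of $n \mapsto \lambda_{\alpha,n}$ (immediate from $\lambda_{\alpha,n} = n(n+d+2\alpha)$ being increasing in $n$ for $n \geq 0$ once $d + 2\alpha \geq 0$, and more generally eventually increasing, with the finitely many initial terms absorbed into $C$) so that the termwise comparison on $\{n \geq N+1\}$ is licit.
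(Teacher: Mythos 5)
Your displayed chain is exactly the paper's proof: Parseval's identity \eqref{Parseval-0} applied to the tail of the expansion, termwise insertion of $(\lambda_{\alpha,n}/\lambda_{\alpha,N+1})^k \geq 1$ for $n \geq N+1$ (the paper phrases this as a factor $\sup_{n\geq N+1}(\lambda_{\alpha,n})^{-k}$, using the same monotonicity of $n \mapsto \lambda_{\alpha,n}$), then \autoref{lem:sequences}, the lower bound $\lambda_{\alpha,N+1}=(N+1)(N+1+d+2\alpha)\geq c(\alpha,d)\,(N+1)^2$, and a square root. So the argument is correct and essentially identical to the paper's.

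The only thing to repair is your closing ``reconciliation'', because there is nothing to reconcile. The estimate you derived, $\norm[n]{u-\proj\psp{\alpha}_N(u)}_{\LL^2_{\rho^\alpha}(B^d)}^2 \leq C\,c^{-k}(N+1)^{-2k}\norm[n]{u}_{\HH^k_{\rho^\alpha}(B^d)}^2$, bounds the \emph{squared} norm, and taking the square root turns $(N+1)^{-2k}$ into precisely the stated $(N+1)^{-k}$. Your proposed ``slightly lossy but sufficient'' fallback --- replacing $\lambda_{\alpha,N+1}^{k}\geq c^{k}(N+1)^{2k}$ by the weaker $c^{k}(N+1)^{k}$ --- conflates the norm with its square: after the square root it would only yield $(N+1)^{-k/2}$, which is not the claimed rate. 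Keep the full factor $(N+1)^{2k}$; it is needed, not spare room.
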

\begin{proof} From \eqref{Parseval-0} in \autoref{lem:allArePolys},
\autoref{lem:sequences} and \eqref{eigenvalues},
\begin{equation*}
\begin{split}
\norm{u - \proj\psp{\alpha}_N(u)}_{\LL^2_{\rho^\alpha}(B^d)}^2
& = \sum_{n=N+1}^\infty \sum_{(j,\nu) \in \mathcal{I}^d_n} \abs{\hat u\psp{\alpha}_{n,j,\nu}}^2 h^n_{j,\nu}(\rho^\alpha)\\
& \leq \sup_{n \geq N+1} \frac{1}{\left(\lambda_{\alpha,n}\right)^k} \sum_{n=N+1}^\infty \sum_{(j,\nu) \in \mathcal{I}^d_n} \left(\lambda_{\alpha,n}\right)^k \abs{\hat u\psp{\alpha}_{n,j,\nu}}^2 h^n_{j,\nu}(\rho^\alpha)\\
& \leq C_1 \left((N + 1)(N + 1 + d +2\alpha)\right)^{-k} \norm{u}_{\HH^k_{\rho^\alpha}(B^d)}^2.
\end{split}
\end{equation*}
Upon taking the square root of both ends of the above chain of inequalities and
using the fact that there exists $C_2 > 0$ such that $(N+1+d+2\alpha)^{-k} \leq
C_2 (N+1)^{-k}$ for $N \in \natural_0$, we obtain the desired result.
\end{proof}

\begin{remark}\label{rem:L2-approx} The result of \autoref{cor:L2-approx} is
essentially a particular case of \cite[Corollary~4.4]{Xu:2005} (which also
encompasses the case of approximation in $\LL^p_{\rho^\alpha}(B^d)$ for general
$p \in [1,\infty]$ and a wider class of weights). The reason why we chose to
present our own proof is because of its simplicity following the intermediate
result \autoref{lem:sequences}, which in turn is needed in the sequel. At this
stage it is relevant to point out that the same result could be obtained with
spaces that, with respect to the $\HH^k_{\rho^\alpha}(B^d)$ spaces, have a
slightly less stringent requirement on the radial derivative
of its members but still map to $\LL^2_{\rho^\alpha}(B^d)$ ($k$ even) or $\HH^1_{\rho^\alpha}(B^d)$ ($k$ odd) under the
action of the $\floor{k/2}$-th power of the operator $L\psp{\alpha}$. In the one-dimensional case this idea is pursued in \cite{Nicaise}.
\end{remark}

The orthogonal projection operators defined in \eqref{projection} allow for
defining an equivalent norm for the $\HH^k_{\rho^\alpha}(B^d)$ spaces.

\begin{proposition}\label{pro:equivalentNorm} Let $\alpha > -1$ and $k \in \natural$.
Then, the functional $u \mapsto \abs{u}_{\HH^k_{\rho^\alpha}(B^d)} +
\norm[n]{\proj\psp{\alpha}_{k-1}(u)}_{\LL^2_{\rho^\alpha}(B^d)}$ is an
equivalent norm for $\HH^k_{\rho^\alpha}(B^d)$.
\end{proposition}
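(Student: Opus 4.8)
The plan is to show that the candidate functional is equivalent to the genuine norm $\norm{\placeholder}_{\HH^k_{\rho^\alpha}(B^d)}$ by proving two-sided inequalities. One direction is immediate: since $\norm[n]{\proj\psp{\alpha}_{k-1}(u)}_{\LL^2_{\rho^\alpha}(B^d)} \leq \norm{u}_{\LL^2_{\rho^\alpha}(B^d)} \leq \norm{u}_{\HH^k_{\rho^\alpha}(B^d)}$ (orthogonal projections are contractions) and $\abs{u}_{\HH^k_{\rho^\alpha}(B^d)} \leq \norm{u}_{\HH^k_{\rho^\alpha}(B^d)}$, the candidate functional is bounded above by $2\norm{u}_{\HH^k_{\rho^\alpha}(B^d)}$. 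The content is in the reverse inequality, and the natural route is a compactness/Peetre-type argument: I would prove $\norm{u}_{\HH^k_{\rho^\alpha}(B^d)} \leq C \bigl( \abs{u}_{\HH^k_{\rho^\alpha}(B^d)} + \norm[n]{\proj\psp{\alpha}_{k-1}(u)}_{\LL^2_{\rho^\alpha}(B^d)} \bigr)$ by reducing it, via induction on the order of derivatives, to the single estimate $\norm{u}_{\LL^2_{\rho^\alpha}(B^d)} \leq C \bigl( \abs{u}_{\HH^k_{\rho^\alpha}(B^d)} + \norm[n]{\proj\psp{\alpha}_{k-1}(u)}_{\LL^2_{\rho^\alpha}(B^d)} \bigr)$, since once the $\LL^2$ norm is controlled the intermediate seminorms $\abs{u}_{\HH^j_{\rho^\alpha}(B^d)}$ for $0 < j < k$ can be interpolated between $\norm{u}_{\LL^2_{\rho^\alpha}(B^d)}$ and $\abs{u}_{\HH^k_{\rho^\alpha}(B^d)}$ — or, more cleanly, absorbed using \autoref{cor:L2-approx} applied to derivatives together with the fact that $\poly^d_{k-1}$ is finite dimensional.

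Concretely, I would argue by contradiction. Suppose the reverse inequality fails; then there is a sequence $(u_n)$ in $\HH^k_{\rho^\alpha}(B^d)$ with $\norm{u_n}_{\HH^k_{\rho^\alpha}(B^d)} = 1$ but $\abs{u_n}_{\HH^k_{\rho^\alpha}(B^d)} + \norm[n]{\proj\psp{\alpha}_{k-1}(u_n)}_{\LL^2_{\rho^\alpha}(B^d)} \to 0$. Passing to a subsequence, $u_n \rightharpoonup u$ weakly in $\HH^k_{\rho^\alpha}(B^d)$; by part \ref{it:compEmb} of \autoref{pro:basic} (or rather the chain of embeddings it rests on, applied to derivatives of order up to $k-1$) the embedding of $\HH^k_{\rho^\alpha}(B^d)$ into $\HH^{k-1}_{\rho^\alpha}(B^d)$ — indeed into $\LL^2_{\rho^\alpha}(B^d)$ — is compact, so $u_n \to u$ strongly in $\HH^{k-1}_{\rho^\alpha}(B^d)$. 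Weak lower semicontinuity of the seminorm forces $\abs{u}_{\HH^k_{\rho^\alpha}(B^d)} = 0$, so all $k$-th order weak derivatives of $u$ vanish and $u \in \poly^d_{k-1}$. On the other hand $\proj\psp{\alpha}_{k-1}$ is continuous on $\LL^2_{\rho^\alpha}(B^d)$, so $\proj\psp{\alpha}_{k-1}(u) = \lim \proj\psp{\alpha}_{k-1}(u_n) = 0$; but for $u \in \poly^d_{k-1}$ we have $\proj\psp{\alpha}_{k-1}(u) = u$, hence $u = 0$. Then $\norm{u_n}_{\HH^{k-1}_{\rho^\alpha}(B^d)} \to 0$ together with $\abs{u_n}_{\HH^k_{\rho^\alpha}(B^d)} \to 0$ gives $\norm{u_n}_{\HH^k_{\rho^\alpha}(B^d)} \to 0$, contradicting $\norm{u_n}_{\HH^k_{\rho^\alpha}(B^d)} = 1$.

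The one genuinely delicate point is the compactness of $\HH^k_{\rho^\alpha}(B^d) \compEmb \HH^{k-1}_{\rho^\alpha}(B^d)$, which is \emph{not} literally part \ref{it:compEmb} of \autoref{pro:basic} (that gives $\HZ_\alpha(B^d) \compEmb \LL^2_{\rho^\alpha}(B^d)$). To get it I would apply the cited Theorem 8.8 of \cite{OG:1989}, with the same choice of parameters, simultaneously to every weak derivative $\partial_\beta u$ with $\abs{\beta} = k-1$: each such derivative lies in $\HH^1_{\rho^{\alpha+1}}(B^d) \compEmb \LL^2_{\rho^\alpha}(B^d)$ provided $\partial_\beta u$ and its first-order derivatives belong to the respective weighted $\LL^2$ spaces, which is exactly membership of $u$ in $\HH^k_{\rho^\alpha}(B^d)$ once one notes $\LL^2_{\rho^\alpha}(B^d) \hookrightarrow \LL^2_{\rho^{\alpha+1}}(B^d)$ on the bounded set $B^d$. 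A Rellich-type diagonal argument over the finitely many multi-indices $\beta$ then yields the claimed compact embedding. An alternative, perhaps cleaner, presentation avoids the contradiction argument altogether: combine \autoref{cor:L2-approx} applied to each $\partial_\beta u$, $\abs{\beta} = k-1$ (controlling $\abs{u - \proj\psp{\alpha}_{\text{something}} u}$ by $\abs{u}_{\HH^k_{\rho^\alpha}(B^d)}$), with equivalence of all norms on the finite-dimensional $\poly^d_{k-1}$; but the compactness route is shorter to write, so that is the one I would carry out, flagging the OG:1989 application as the step needing the most care.
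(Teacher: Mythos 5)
Your proposal is correct and is essentially the paper's argument: the paper likewise deduces $\HH^k_{\rho^\alpha}(B^d) \compEmb \HH^{k-1}_{\rho^\alpha}(B^d)$ from Theorem~8.8 of \cite{OG:1989} and then applies the Peetre--Tartar lemma (\cite[Lemma~11.1]{Tartar:2007}), whose standard proof is exactly the compactness--contradiction argument you write out, resting on the same two facts that $\grad_k u \equiv 0$ forces $u \in \poly^d_{k-1}$ and that $\proj\psp{\alpha}_{k-1}$ fixes $\poly^d_{k-1}$. The only cosmetic difference is that the paper obtains the compact embedding by citing the first-order case with $\beta = \alpha$ and a standard lifting argument (\cite[Remark~6.4.4]{AF:2003}) rather than applying \cite{OG:1989} derivative by derivative as you propose.
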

\begin{proof}
Setting $\Omega = B^d$, $\kappa = 1$, $p = q = 2$, $\beta = \alpha$ and $\alpha
= \alpha$ in Theorem 8.8 of \cite{OG:1989} we have that
$\HH^1_{\rho^\alpha}(B^d) \compEmb \LL^2_{\rho^\alpha}(B^d)$. By standard
arguments \cite[Remark~6.4.4]{AF:2003} this implies that
$\HH^k_{\rho^\alpha}(B^d) \compEmb \HH^{k-1}_{\rho^\alpha}(B^d)$. Then, the
desired result follows from the Peetre--Tartar lemma; in the formulation of
\cite[Lemma~11.1]{Tartar:2007} it comes from setting $E_1 =
\HH^k_{\rho^\alpha}(B^d)$, $E_2 =
[\LL^2_{\rho^\alpha}(B^d)]^{\binom{k+d-1}{k}}$, $E_3 =
\HH^{k-1}_{\rho^\alpha}(B^d)$, $A = \grad_k$, $B$ the injection from
$\HH^k_{\rho^\alpha}(B^d)$ onto $\HH^{k-1}_{\rho^\alpha}(B^d)$, $G =
\LL^2_{\rho^\alpha}(B^d)$ and $M = \proj\psp{\alpha}_{k-1}$ and noting that
$\nabla_k u \equiv 0$ implies $u \in \poly^d_{k-1}$, which in turn is a
consequence of \cite[Lemma~6.4]{Tartar:2007}.
\end{proof}

\subsection{Inverse or Markov-type inequality}

We will later have use of a Markov-type inequality (\autoref{lem:Markov} below).
We were not able to reproduce the very direct proofs that work in the one dimensional case (cf.\ \cite[p.~298]{CHQZ-I} and \cite[Theorem 2.3]{Guo:2000a}), so had to take a detour through Remez- and Bernstein-type inequalities.
Our argument roughly corresponds to some in \cite{Ditzian:1992} where the unweighted case is treated for more general geometry and for general $\LL^p$ norms.
Other related integral inequalities for polynomials appear in \cite{Dai:2006} and \cite{BottcherDorfler:2011} and are reported in \cite{Daugavet:1972}.

\begin{proposition}\label{pro:Remez} Let $\alpha > -1$. If the positive number
$A$ is small enough, then there exists $C = C(A, \alpha, d) > 0$ such that
\begin{equation}\label{Remez}
(\forall \, N \in \natural)\ (\forall\,p\in\poly^d_N) \quad
\int_{B^d} \abs{p(x)}^2 \rho(x)^\alpha \dd x
\leq C \int_{B_{\Real^d}\left(0,1-A N^{-2}\right)} \abs{p(x)}^2 \rho(x)^\alpha \dd x.
\end{equation}
\end{proposition}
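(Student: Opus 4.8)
The plan is to reduce the claim to a one-dimensional Remez-type inequality on a suitable interval, using the generalized polar decomposition \eqref{polarIntegral} to separate the radial and angular variables. Fix $p \in \poly^d_N$. Writing $x = r y$ with $r \in [0,1)$, $y \in \mathbb{S}^{d-1}$, we have $\rho(x)^\alpha = (1-r^2)^\alpha$, so
\begin{equation*}
\int_{B^d} \abs{p(x)}^2 \rho(x)^\alpha \dd x = \int_{\mathbb{S}^{d-1}} \left( \int_0^1 \abs{p(ry)}^2 (1-r^2)^\alpha r^{d-1} \dd r \right) \sigma_{d-1}(\dd y),
\end{equation*}
and likewise with the $r$-integral truncated at $1 - AN^{-2}$ for the right-hand side. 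Since for each fixed $y$ the map $r \mapsto p(ry)$ is a univariate polynomial in $r$ of degree at most $N$, it suffices to prove a uniform-in-$y$ bound for the inner radial integrals: there exists $C = C(A,\alpha,d)$ such that for every univariate polynomial $q$ of degree $\le N$,
\begin{equation}\label{radialRemez}
\int_0^1 \abs{q(r)}^2 (1-r^2)^\alpha r^{d-1} \dd r \le C \int_0^{1-AN^{-2}} \abs{q(r)}^2 (1-r^2)^\alpha r^{d-1} \dd r.
\end{equation}
Integrating \eqref{radialRemez} over $\mathbb{S}^{d-1}$ then yields \eqref{Remez}.

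To establish \eqref{radialRemez} I would first dispose of the weight $r^{d-1}$, which is harmless on $[0,1]$: the issue is entirely near $r = 1$. A clean way to proceed is to substitute $t = 2r^2 - 1 \in [-1,1)$, turning $(1-r^2)^\alpha = 2^{-\alpha}(1-t)^\alpha$ and $r^{d-1}\dd r = \text{const}\,(1+t)^{(d-2)/2}\dd t$, so that the radial weight becomes the Jacobi weight $\wJac{\alpha}{(d-2)/2}$ and $q(r) = q(\sqrt{(1+t)/2})$ becomes (after composing with the degree-$N$ polynomial $r\mapsto q(r)$ and using that only even powers survive when relevant, or simply bounding crudely) a polynomial of degree $\le N$ in $\sqrt{(1+t)/2}$; more robustly, $\abs{q(r)}^2$ as a function of $t$ is a polynomial of degree $\le N$ in $t$ on the relevant range after symmetrization, or one may avoid the substitution entirely. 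The cleanest route, and the one I would actually take, is to invoke a known weighted Remez-type inequality for the Jacobi (or one-sided Gegenbauer-type) weight on $[-1,1]$: for polynomials of degree $\le N$, the mass of $\abs{q}^2 (1-t)^\alpha(1+t)^\beta$ on $[-1,1]$ is controlled by its mass on $[-1, 1 - cN^{-2}]$ once $c$ is small enough, with constant independent of $N$. Such inequalities are exactly the one-dimensional results the paper alludes to in the discussion preceding the proposition (\cite{Ditzian:1992}, \cite{Dai:2006}, \cite{BottcherDorfler:2011}, \cite{Daugavet:1972}); the scale $N^{-2}$ is the correct one because the Jacobi nodes nearest $t=1$ lie at distance $\asymp N^{-2}$.

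The main obstacle is producing or citing the right one-dimensional weighted inequality in a form that is uniform in $N$ and has the $N^{-2}$ scaling with an explicit "$A$ small enough" threshold, rather than an unspecified absolute interval shrinkage. If a direct citation is unavailable I would prove it from a weighted Nikolskii/Bernstein-type inequality: a polynomial $q$ of degree $\le N$ satisfies, on the boundary strip $1 - AN^{-2} \le r \le 1$, a pointwise bound $\abs{q(r)}^2 (1-r^2)^\alpha \le C N \int_0^1 \abs{q}^2 (1-s^2)^\alpha \dd s$ (a Christoffel-function estimate for the Gegenbauer-type weight), integrate this over the strip of length $AN^{-2}$ to get that the strip's contribution is $\le CA$ times the total, and absorb it for $A$ small — this is the standard "Remez from Nikolskii" argument and is precisely the detour through Remez- and Bernstein-type inequalities the text announces. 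Care is needed that all constants depend only on $\alpha$ (equivalently on $\alpha$ and the auxiliary exponent $(d-2)/2$), which they do since these are fixed, so the final constant in \eqref{Remez} depends only on $A$, $\alpha$ and $d$, as claimed.
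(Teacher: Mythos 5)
Your reduction to one dimension is sound and genuinely different from the paper's: you slice along fixed directions $y\in\mathbb{S}^{d-1}$ via \eqref{polarIntegral}, use that $r\mapsto p(ry)$ is a univariate polynomial of degree at most $N$, and integrate a direction-uniform radial Remez inequality over the sphere. The paper instead expands $p$ in the basis \eqref{BOVN}, uses the $\LL^2(\sigma_{d-1})$-orthogonality of the spherical harmonics to reduce to the particular univariate polynomials $r\mapsto r^l\,\hat p\psp{\alpha}_{\nu,l}(2r^2-1)$, and only then invokes a one-dimensional inequality. Your slicing avoids the orthogonal expansion altogether and is, if anything, simpler; both reductions leave the same one-dimensional statement to prove, namely a Remez inequality for the weight $r\mapsto(1-r^2)^\alpha r^{d-1}$ on $(0,1)$ with trimming of width $AN^{-2}$ at $r=1$, uniform over all polynomials of degree at most $N$.

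The genuine problems are in how you handle that one-dimensional step, which is the heart of the matter. The substitution $t=2r^2-1$ produces the Jacobi weight $\wJac{\alpha}{(d-2)/2}$ but destroys polynomiality: $q\bigl(\sqrt{(1+t)/2}\bigr)$ is not a polynomial in $t$, $\abs{q(r)}^2$ is one only when $q$ is even, and the symmetrization $\abs{q(r)}^2+\abs{q(-r)}^2$ introduces values of $q$ at negative arguments which the trimmed integral on $[0,1-AN^{-2}]$ does not control, so that patch does not yield the one-sided inequality you stated. The missing ingredient is the paper's much simpler bridge: the \emph{linear} substitution $r'=2r-1$, which preserves the degree and turns the weight into a function comparable (with constants depending only on $\alpha$ and $d$) to the Jacobi weight $\wJac{\alpha}{d-1}$, after which the Remez-type inequality for doubling (in particular Jacobi) weights with trimming at scale $N^{-2}$, \cite[eqs.~2.3 and~7.17]{MT:2000}, finishes the job---trimming both endpoints there only strengthens what you need; alternatively, slice along full diameters $r\in(-1,1)$, where $(1-r^2)^\alpha\abs{r}^{d-1}$ is itself a doubling weight and the same citation applies directly. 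Your ``Remez from Nikolskii'' fallback also fails as written: you cannot ``dispose of'' the factor $r^{d-1}$, since $\int_0^1\abs{q}^2(1-s^2)^\alpha\dd s$ and $\int_0^1\abs{q}^2(1-s^2)^\alpha s^{d-1}\dd s$ can differ by a factor of order $N^{d-1}$ (take $q(s)=(1-s)^N$), so a strip bound expressed in terms of the former cannot be absorbed into the latter; the Christoffel-function estimate must be taken for the full weight. Moreover, the correct pointwise bound in the strip is of order $(1-r)^\alpha N^{2\alpha+2}$ (about $N^{2}$ at distance $N^{-2}$ from the endpoint), not $N$, and integrating it over the strip gives a contribution of order $A^{\alpha+1}$ times the total---still absorbable for small $A$, but your stated arithmetic ($CN$ over a strip of length $AN^{-2}$ yielding $CA$) does not come out. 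With the linear change of variable and the doubling-weight citation in place, your sliced argument does give a complete and somewhat shorter proof of \eqref{Remez}.
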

\begin{proof} We know from equations 2.3 (weights of Jacobi type are doubling)
and 7.17 (Remez-type inequality for doubling weights and algebraic polynomials)
of \cite{MT:2000} that the following Remez-type inequality holds in the
one-dimensional case: Let $\alpha, \beta > -1$. Then, there exists $\Lambda_0 =
\Lambda_0(\alpha,\beta) > 0$ such that for every $\Lambda \leq \Lambda_0$ there
exists $C_{\mathrm{R}} = C_{\mathrm{R}}(\alpha,\beta,\Lambda) > 0$ such that,
in turn, for every $N \in \natural$ and every $p \in \poly^1_N$,
\begin{equation}\label{Remez-1d}
\int_{-1}^1 \abs{p(x)}^2 (1-x)^\alpha (1+x)^\beta \dd x \leq C_{\mathrm{R}} \int_{-(1-\Lambda N^{-2})}^{1-\Lambda N^{-2}} \abs{p(x)}^2 (1-x)^\alpha (1+x)^\beta \dd x.
\end{equation}
So, setting $\beta = \alpha$, \eqref{Remez} is true in the $d = 1$ case.

Let $N \in \natural$ and $p \in \poly^d_N$. Then, the orthogonal expansion
\eqref{expansion} of $p$ is finite; using \eqref{BOVN}:
\begin{equation*}
p(x) = \sum_{k=0}^N \sum_{(j,\nu) \in \mathcal{I}^d_k} \hat p\psp{\alpha}_{k,j,\nu} \, Y^d_{k-2j,\nu}(x) \Jac[j]{\alpha}{k-2j+\frac{d-2}{2}}(2 \abs{x}^2 - 1).
\end{equation*}
Because of the form the index sets $\mathcal{I}^d_k$ have (cf.\
\eqref{degreeNIndices}) we can rearrange the sum as
\begin{equation*}
p(x) = \sum_{l = 0}^N \sum_{\nu = 1}^{\dim(\mathcal{H}^d_l)} Y^d_{l,\nu}(x) \, \hat p\psp{\alpha}_{\nu,l}(2\abs{x}^2-1)
\quad\text{where}\quad
\hat p\psp{\alpha}_{\nu,l} = \sum_{j=0}^{\floor{(N-l)/2}} \hat p\psp{\alpha}_{l+2j,j,\nu}
\Jac[j]{\alpha}{l+\frac{d-2}{2}} \in \poly^d_{\floor{(N-l)/2}}.
\end{equation*}
Using the
generalized polar integration formula \eqref{polarIntegral} and the fact that
the $Y^d_{l,\nu}$ are homogeneous polynomials of degree $l$ (whence
$Y^d_{l,\nu}(r y) = r^l Y^d_{l,\nu}(y)$ for all $r > 0$ and $y \in
\mathbb{S}^{d-1}$) and pairwise $\LL^2(\sigma_{d-1})$-orthogonal,
\begin{equation}\label{Remez-reduction-to-1d}
\int_{B^d} \abs{p(x)}^2 \rho(x)^\alpha \dd x
 = \sum_{l = 0}^N \sum_{\nu = 1}^{\dim(\mathcal{H}^d_l)} \norm{Y^d_{l,\nu}}_{\LL^2(\sigma_{d-1})}^2 \int_0^1 \abs{r^l \hat p\psp{\alpha}_{\nu,l}(2 r^2-1)}^2 (1-r^2)^\alpha \, r^{d-1} \dd r.
\end{equation}
Each of the functions $r \mapsto r^l \, \hat p\psp{\alpha}_{\nu,l}(2 r^2 - 1)$ is a
polynomial of degree less than or equal to $N$. Performing the change of
variable $r' = 2r-1$ they remain so and $r \mapsto (1-r^2)^\alpha r^{d-1}$
turns into a function which in $(-1, 1)$ is bounded from above and below by
positive constants times $r' \mapsto (1-r')^\alpha (1+r')^{d-1}$. Hence, we can
appeal to \eqref{Remez-1d} with the same $\alpha$ and $\beta = d-1$ and
performing the inverse change of variable we obtain that for $A$ small enough
there exists $C > 0$ depending on $A$, $\alpha$ and the dimension $d$ such that
\begin{equation*}
\int_0^1 \abs{r^l \hat p\psp{\alpha}_{\nu,l}(2 r^2-1)}^2 (1-r^2)^\alpha \, r^{d-1} \dd r
\leq C \int_{A N^{-2}}^{1-A N^{-2}} \abs{r^l \hat p\psp{\alpha}_{\nu, l}\left(2 r^2 - 1 \right)}^2 (1-r^2)^\alpha r^{d-1} \dd r.
\end{equation*}
Using this in \eqref{Remez-reduction-to-1d} and using again the
$\LL^2(\sigma_{d-1})$ orthogonality of the $Y^d_{l,\nu}$ and the generalized
polar integration formula \eqref{polarIntegral} we obtain that $\int_{B^d}
\abs{p(x)}^2 \rho(x)^\alpha \dd x$ can be bounded by $C$ times the same
integral taken over $\{x \in \Real^d \mid A N^{-2} < \abs{x} < 1-A N^{-2}\}$ and
this implies the desired result.
\end{proof}

\begin{lemma}\label{lem:Markov} Let $\alpha > -1$. Then there exists $C =
C(\alpha,d) > 0$ such that for all $N \in \natural_0$ and $p \in \poly^d_N$,
\begin{equation*}
\norm{\grad p}_{[\LL^2_{\rho^\alpha}(B^d)]^d} \leq C N^2 \norm{p}_{\LL^2_{\rho^\alpha}(B^d)}.
\end{equation*}
\end{lemma}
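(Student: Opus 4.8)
The plan is to obtain the Markov-type inequality \autoref{lem:Markov} by combining the Remez-type inequality of \autoref{pro:Remez} with a Bernstein-type inequality on an interior ball, following the broad strategy of \cite{Ditzian:1992}. First I would note that on the shrunken ball $B_{\Real^d}(0, 1 - AN^{-2})$ the weight $\rho^\alpha$ is bounded above and below by positive constants depending only on $A$, $\alpha$ (namely $\rho(x)^\alpha$ is comparable to $(AN^{-2})^\alpha$ near the boundary of the shrunken ball and to $1$ near the origin, but crucially $\rho^\alpha \ge c (AN^{-2})^{\max(\alpha,0)}$ there is too crude); more precisely, on any ball $B_{\Real^d}(0, 1-\delta)$ the weight satisfies $\rho(x)^\alpha \in [c_1(\delta), c_2(\delta)]$, so the weighted $\LL^2$ norm over that ball is equivalent, up to $\delta$-dependent constants, to the unweighted $\LL^2$ norm over that ball. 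The subtlety is that the comparison constants degenerate as $\delta = AN^{-2} \to 0$, so I must track this $N$-dependence carefully rather than treat it as a fixed equivalence.

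The key steps, in order, would be: (1) Use \eqref{Remez} to pass from $\norm{p}_{\LL^2_{\rho^\alpha}(B^d)}$ to the same norm over the shrunken ball $\Omega_N := B_{\Real^d}(0, 1-AN^{-2})$, at the cost of a constant $C$. (2) On $\Omega_N$, apply a classical Bernstein/Markov inequality for the gradient of polynomials on a ball of radius $1 - AN^{-2}$: for $p \in \poly^d_N$ one has $\norm{\grad p}_{[\LL^2(\Omega_N)]^d} \le C\, \frac{N^2}{\operatorname{dist}(\Omega_N, \mathbb{S}^{d-1})}\,\norm{p}_{\LL^2(B_{\Real^d}(0,1-\frac12 AN^{-2}))}$ or, more simply, work on a slightly larger ball and use that the unweighted Markov factor for a ball of radius $\approx 1$ is $O(N^2)$ while the geometry contributes an additional factor controlled by the gap $AN^{-2}$; actually the cleanest route is to first reduce $\norm{\grad p}_{\LL^2_{\rho^\alpha}(B^d)}$ on $B^d$ itself by splitting into the region near the boundary and an interior region. (3) Reintroduce the weight: on $\Omega_N$ bound $\norm{\grad p}_{[\LL^2_{\rho^\alpha}(B^d)]^d}^2 \le \sup_{\Omega_N}\rho^{-\alpha} \cdot (\text{weighted }\dots)$ -- no, rather bound the weighted gradient norm over $B^d$ above by the weighted gradient norm over $\Omega_N$ plus a boundary-layer term, then on $\Omega_N$ pass to unweighted norms, apply Bernstein, and pass back. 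The arithmetic of how the $\delta = AN^{-2}$ factors from the two weight comparisons and the Bernstein factor $\delta^{-1}$ combine is what has to come out to exactly $N^2$: one expects roughly $\delta^{-\alpha}$ (lower comparison) times $\delta^{-1}$ (Bernstein geometry) times $\delta^{\alpha}$ (upper comparison, or from integrating $\rho^\alpha$ near the new boundary) $= \delta^{-1} = A^{-1}N^2$, giving the claimed $N^2$.

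A cleaner variant I would actually prefer: reduce directly to the one-dimensional weighted Markov inequality via the same polar/spherical-harmonic decomposition used in the proof of \autoref{pro:Remez}. Writing $p(x) = \sum_{l,\nu} Y^d_{l,\nu}(x)\, q_{\nu,l}(2\abs{x}^2-1)$ with $q_{\nu,l}$ a univariate polynomial of degree $\le \floor{(N-l)/2}$, one has $\grad p$ expressible in terms of $\grad Y^d_{l,\nu}$ (homogeneous of degree $l-1$), $Y^d_{l,\nu}$ and $q_{\nu,l}'$; using the $\LL^2(\sigma_{d-1})$-orthogonality of spherical harmonics and of their gradients across degrees, the weighted gradient norm squared decouples into a sum of one-dimensional weighted integrals of the form $\int_0^1 |(\text{poly of degree} \le N)'|^2 (1-r^2)^\alpha r^{d-1}\dd r$ plus lower-order terms, each controlled by $N^4$ times the corresponding $\LL^2$ integral by the classical univariate Markov inequality in the Jacobi weight $(1-t)^\alpha(1+t)^{d-1}$ (cf.\ \cite[p.~298]{CHQZ-I}, \cite[Theorem~2.3]{Guo:2000a}) after the change of variables $t = 2r-1$. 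Reassembling gives $\norm{\grad p}_{[\LL^2_{\rho^\alpha}(B^d)]^d}^2 \le C N^4 \norm{p}_{\LL^2_{\rho^\alpha}(B^d)}^2$, i.e.\ the claim.

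The main obstacle I anticipate is handling the chain rule and the interaction of the radial derivative with the angular part cleanly: $\partial_i\big(Y^d_{l,\nu}(x) q(2\abs{x}^2-1)\big) = (\partial_i Y^d_{l,\nu})(x)\,q(2\abs{x}^2-1) + 4 x_i\, Y^d_{l,\nu}(x)\, q'(2\abs{x}^2-1)$, and while $(\partial_i Y^d_{l,\nu})_\nu$ spans harmonic polynomials homogeneous of degree $l-1$ and is $\LL^2(\sigma_{d-1})$-orthogonal to harmonics of other degrees, the term $x_i Y^d_{l,\nu}$ is not harmonic, so I must either use $\abs{\grad_{\mathbb{S}^{d-1}}}$ estimates for spherical harmonics ($\norm{\grad_{\mathbb{S}}Y_{l,\nu}}_{\LL^2(\sigma)}^2 = l(l+d-2)\norm{Y_{l,\nu}}_{\LL^2(\sigma)}^2$, giving an extra $N^2$ consistent with the $N^4 = (N^2)^2$ bound) together with the radial part, or more robustly bound $\abs{\grad p}^2 \le C(\abs{\partial_r p}^2 + \abs{x}^{-2}\abs{\grad_{\mathbb{S}}p}^2)$ and treat the two pieces separately; the $\abs{x}^{-2}$ singularity at the origin is harmless for polynomials. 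Carrying this bookkeeping through while keeping all constants independent of $N$, $l$, $\nu$ is the delicate part, but the Remez-based route of the first paragraph sidesteps it at the price of the more careful $\delta$-power arithmetic.
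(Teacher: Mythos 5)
Your first route follows the paper's broad strategy (Remez plus Bernstein), but it is missing the one ingredient that makes the exponent come out right, and the $\delta$-bookkeeping you sketch does not survive scrutiny. The claimed cancellation $\delta^{-\alpha}\cdot\delta^{-1}\cdot\delta^{\alpha}=\delta^{-1}$ cannot happen: the upper weight comparison is applied to the integral of $\abs{\grad p}^2$ and the lower one to the integral of $\abs{p}^2$, so they act on different quantities and do not telescope; carried out honestly (pass to the unweighted norm on $B(0,1-\delta)$, apply the classical unweighted Markov inequality with factor $N^2$, pass back) this route yields $\norm{\grad p}_{[\LL^2_{\rho^\alpha}(B^d)]^d}\leq C N^{2+\abs{\alpha}}\norm{p}_{\LL^2_{\rho^\alpha}(B^d)}$, which is weaker than the statement whenever $\alpha\neq 0$. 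Moreover, a ``Bernstein geometry factor $\delta^{-1}$'' carrying no degree dependence is not an actual inequality. What the paper does instead is to prove first, directly from the quadratic form of \eqref{wrong-EV-problem} and the eigenvalues \eqref{eigenvalues}, the weighted Bernstein-type bound \eqref{Bernstein}, namely $\norm{\grad p}_{[\LL^2_{\rho^{\alpha+1}}(B^d)]^d}^2\leq N(N+d+2\alpha)\norm{p}_{\LL^2_{\rho^\alpha}(B^d)}^2$, in which the gradient is measured with the \emph{heavier} weight $\rho^{\alpha+1}$. Then \autoref{pro:Remez} is applied to $\partial_i p\in\poly^d_{N-1}$ and the Remez step is used to trade exactly one power of $\rho$, via $\sup\rho^{-1}\leq (N-1)^2/A$ on the shrunken ball, independently of $\alpha$; the product $(N-1)^2\cdot N(N+d+2\alpha)\sim N^4$ on squared norms gives the sharp $N^2$. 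Without an $\alpha$-adapted Bernstein inequality of this kind, your version of the argument cannot reach the stated exponent.

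Your second (preferred) route, reduction to one dimension through the spherical-harmonic decomposition, also has two concrete gaps. First, the radial part requires a univariate Markov inequality $\norm{P'}_{\LL^2_{\chi^{(\alpha,d-1)}}(-1,1)}\leq CN^2\norm{P}_{\LL^2_{\chi^{(\alpha,d-1)}}(-1,1)}$ for the \emph{non-symmetric} Jacobi weight $(1-t)^\alpha(1+t)^{d-1}$; the references you invoke cover the Legendre/Chebyshev and symmetric Gegenbauer cases only (indeed the paper remarks that the direct one-dimensional proofs could not be reproduced here, which is precisely why it takes the Remez detour), so this inequality would need its own proof or citation. Second, and more seriously, the angular part is not handled: after the split $\abs{\grad p}^2=\abs{\partial_r p}^2+r^{-2}\abs{\grad_{\mathbb{S}}p}^2$ and the identity $\norm{\grad_{\mathbb{S}}Y_{l,\nu}}^2_{\LL^2(\sigma_{d-1})}=l(l+d-2)\norm{Y_{l,\nu}}^2_{\LL^2(\sigma_{d-1})}$ you must compare $\int_0^1 r^{2l-2}\abs{q_{l,\nu}(2r^2-1)}^2(1-r^2)^\alpha r^{d-1}\dd r$ with the corresponding integral carrying $r^{2l}$, and the extra $r^{-2}$ is not merely an integrability issue: for $d=2$, $l=1$, $q(u)=(1-u)^m$ the ratio of these two integrals grows like $m\sim N$. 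So you need a genuine weight-transfer (Hardy--Markov type) inequality of the form $\int_0^1 r^{-2}\abs{P(r)}^2(1-r^2)^\alpha r^{d-1}\dd r\leq CN^2\int_0^1\abs{P(r)}^2(1-r^2)^\alpha r^{d-1}\dd r$ for polynomials $P$ of degree at most $N$ vanishing at the origin, with a constant uniform in the harmonic degree $l$; dismissing the singularity as ``harmless for polynomials'' addresses finiteness but not this quantitative bound. The total budget $N^4$ on squared norms does leave room for it ($l(l+d-2)\leq N^2$ times an $N^2$ transfer cost), so the route is plausibly repairable, but as written the key step is absent.
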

\begin{proof} We start by proving a Bernstein-type inequality. Let $N \in
\natural_0$ and let $q \in \poly^d_N$ so its expansion according to
\eqref{expansion} is finite: $q = \sum_{k=0}^N \sum_{(j,\nu) \in
\mathcal{I}^d_k} \hat q\psp{\alpha}_{k,j,\nu}
P^k_{j,\nu}(\rho^\alpha;\placeholder)$. Then, as the
$P^k_{j,\nu}(\rho^\alpha;\placeholder)$ are eigenfunctions of the problem
\eqref{wrong-EV-problem} (cf.\ \autoref{lem:allArePolys})---with associated
eigenvalue $\lambda_{\alpha,k}$ which is a monotone increasing function of $k$
(cf.\ \eqref{eigenvalues})---and pairwise
$\LL^2_{\rho^\alpha}(B^d)$-orthogonal,
\begin{multline}\label{Bernstein}
\norm{\grad q}_{[\LL^2_{\rho^{\alpha+1}}(B^d)]^d}^2
\leq \norm{\grad q}_{[\LL^2_{\rho^{\alpha+1}}(B^d)]^d}^2 + \sum_{1 \leq i < j \leq d} \norm{x_j \partial_i q - x_i \partial_j q}_{\LL^2_{\rho^\alpha}(B^d)}^2\\
= \sum_{k=0}^N \sum_{(j,\nu) \in \mathcal{I}^d_k} \abs{q\psp{\alpha}_{k,j,\nu}}^2 \lambda_{\alpha,k} \norm{P^k_{j,\nu}(\rho^\alpha;\placeholder)}_{\LL^2_{\rho^\alpha}(B^d)}^2
\leq N (N + d + 2\alpha) \norm{q}_{\LL^2_{\rho^\alpha}(B^d)}^2.
\end{multline}

Let us now consider $p \in \poly^d_N$ with $N \geq 2$ (our desired result is
obviously true in the case $N = 0$ with any $C$ and the case $N = 1$ can be
incorporated later at the possible price of an enlargement of $C$). Given any
direction $i \in \{1, \dotsc, N\}$, $\partial_i p \in \poly^d_{N-1}$. From
\autoref{pro:Remez} there exist positive numbers $A$ and $C_1$ such that
\begin{multline*}
\int_{B^d} \abs{\partial_i p(x)}^2 \rho(x)^\alpha \dd x
\leq C_1 \int_{B_{\Real^d}\left(0,1-A(N-1)^{-2}\right)} \abs{\partial_i p(x)}^2 \rho(x)^\alpha \dd x\\
\leq C_1 \left[\sup_{B_{\Real^d}\left(0,1-A(N-1)^{-2}\right)} \rho^{-1} \right] \int_{B^d} \abs{\partial_i(x)}^2 \rho(x)^{\alpha+1} \dd x.
\end{multline*}
We can replace $\rho^{-1}$ with $x \mapsto (1-\abs{x})^{-1}$ in the supremum
above which then evaluates to $(N-1)^2/A$ to obtain an upper bound for the last
expression. Summing the resulting inequality with respect to $i$ and using the
Bernstein-type inequality \eqref{Bernstein},
\begin{equation*}
\norm{\grad p}_{[\LL^2_{\rho^\alpha}(B^d)]^d}^2 \leq C_1/A (N-1)^2 N (N+2+2\alpha) \norm{p}_{\LL^2_{\rho^\alpha}(B^d)}^2
\end{equation*}
which, through another possible worsening of the constant, implies the desired result.
\end{proof}

\section{Truncation projection in the two-dimensional case}\label{sec:2d}

\subsection{Zernike polynomials and Fourier--Zernike series}\label{ssc:ZPZFS}

Let $\theta \colon B^2 \rightarrow \Real$ and $r \colon B^2 \rightarrow \Real$
be the usual components of the Cartesian-to-polar change of coordinates. Then,
an admissible basis of $\mathcal{H}^2_n$ is readily identified as $\{1\}$ if $n
= 0$ and $\{ r^n \cos(n \theta) , r^n \sin(n \theta) \}$ if $n \geq 1$. Yet
instead of using the resulting system of solutions of \eqref{wrong-EV-problem}
exactly as given in \eqref{BOVN}, we will find it more convenient to use the
following recombined, re-indexed and rescaled form found in
\cite[eq.~2.1]{Wunsche:2005}:
\begin{equation}\label{ev-polar-separation}
(\forall\,(m,n) \in \natural_0 \times \natural_0) \quad P\psp{\alpha}_{m,n}
= \frac{\Gamma(\min(m,n)+1) \Gamma(\alpha+1)}{\Gamma(\min(m,n) + \alpha + 1)}
r^{\abs{m-n}} e^{\imath (m-n) \theta}
\Jac[\min(m,n)]{\alpha}{\abs{m-n}}(2r^2-1).
\end{equation}
In order to simplify some expressions below we adopt the convention
\begin{equation}\label{convention}
P\psp{\alpha}_{m,n} \equiv 0 \quad \text{if } m < 0 \text{ or } n < 0.
\end{equation}
As in the above re-indexing $\abs{m-n}$ is the degree of the spherical harmonic
and $\min(m,n)$ that of the Jacobi polynomial involved, the degree of the
polynomial $P\psp{\alpha}_{m,n}$ is $\abs{m-n} + 2 \min(m,n) = m + n$, whence
its associated eigenvalue is (cf.\ \eqref{eigenvalues})
\begin{equation}\label{eigenvalues-2d}
\lambda\psp{\alpha}_{m,n} = (m + n) (m + n + 2 + 2 \alpha).
\end{equation}
The attractiveness of the precise form for the basis functions given in
\eqref{ev-polar-separation} is apparent in the light of the simplicity of the
relations (cf.\ equations 3.4 and 5.3 of \cite{Wunsche:2005})
\begin{gather}
\label{squaredNorm-2d}
h\psp{\alpha}_{m,n} := \norm[n]{P\psp{\alpha}_{m,n}}_{\LL^2_{\rho^\alpha}(B^2)}^2
= \frac{\pi \, \Gamma(\alpha + 1)^2}{m + n + \alpha + 1} \frac{\Gamma(m + 1)}{\Gamma(m + \alpha + 1)} \frac{\Gamma(n + 1)}{\Gamma{(n + \alpha + 1)}},\\
\label{rotated-diffs}
\partial_{z^*} P\psp{\alpha}_{m,n} = \frac{(m + \alpha + 1) n}{\alpha + 1} P\psp{\alpha+1}_{m, n-1}
\qquad\text{and}\qquad
\partial_z P\psp{\alpha}_{m,n} = \frac{m (n + \alpha + 1)}{\alpha + 1} P\psp{\alpha+1}_{m-1,n},
\end{gather}
which are valid for all $(m,n) \in \natural_0 \times \natural_0$; here,
$\partial_{z^*} = \frac{1}{2} \left(\partial_1 + \imath\,\partial_2\right) =
\frac{e^{\imath \theta}}{2} \left( \partial_r + \frac{\imath}{r}
\partial_\theta \right)$ and $\partial_z = \frac{1}{2} \left(\partial_1 -
\imath \, \partial_2\right) = \frac{e^{-\imath \theta}}{2} \left( \partial_r -
\frac{\imath}{r} \partial_\theta \right)$. For analytical purposes these
differential operators can be used in lieu of the canonical ones because for
all weakly differentiable $u$ and $l \in \natural$, the relation
\begin{equation}\label{proto-equivalent-seminorm}
\abs{\grad_l u}^2 = \sum_{l_1+l_2=l}\abs[n]{\partial_1^{l_1} \partial_2^{l_2} u}^2 \cong_l \sum_{l_1+l_2=l} \abs[n]{\partial_z^{l_1} \partial_{z^*}^{l_2} u}^2
\end{equation}
holds almost everywhere, with $\cong_l$ meaning that each side is bounded by
the other times some positive constant depending on $l$ only. When $l = 1$ the
left-hand side is exactly twice the right-hand side almost everywhere.

We now translate some of the results of \autoref{sec:PEEUB} to the reindexed
and rescaled basis \eqref{ev-polar-separation}. Suppose that $\alpha > -1$.
From \autoref{lem:allArePolys},
\begin{equation}\label{expansion-2d}
\left(\forall\, u \in \LL^2_{\rho^\alpha}(B^2)\right) \qquad u = \sum_{(m,n) \in \natural_0 \times \natural_0} \hat u\psp{\alpha}_{m,n} \, P\psp{\alpha}_{m,n}
\end{equation}
in the $\LL^2_{\rho^\alpha}(B^2)$ sense in general and in the $\WZ_\alpha(B^2)$
sense if, in addition, $u \in \HZ_\alpha(B^2)$; here, for all $u \in
\LL^2_{\rho^\alpha}(B^2)$ and $(m,n) \in \natural_0 \times \natural_0$,
\begin{equation}\label{coefficient-2d}
\hat u\psp{\alpha}_{m,n} := \left\langle u, P\psp{\alpha}_{m,n}\right\rangle_{\LL^2_{\rho^\alpha}(B^2)} \left/ h\psp{\alpha}_{m,n} \right. .
\end{equation}
Further, Parseval's identity manifests itself as
\begin{equation}\label{Parseval-2d}
\begin{cases}
(\forall\,u\in\LL^2_{\rho^\alpha}(B^2)) \quad \norm{u}_{\LL^2_{\rho^\alpha}(B^2)}^2\\
(\forall\,u\in\HZ_\alpha(B^2)) \quad \norm{u}_{\WZ_\alpha(B^2)}^2
\end{cases}
= \sum_{(m,n) \in \natural_0 \times \natural_0}
\begin{cases}
1\\
1 + \lambda\psp{\alpha}_{m,n}
\end{cases}
\abs{\hat u\psp{\alpha}_{m,n}}^2 h\psp{\alpha}_{m,n}.
\end{equation}
From \autoref{lem:sequences} we know that there exists a positive constant $C =
C(\alpha,k)$ such that
\begin{equation}\label{sequences-2d}
\left(\forall\, u \in \HH^k_{\rho^\alpha}(B^2)\right) \qquad \sum_{(m,n) \in \natural_0 \times \natural_0} \left(\lambda\psp{\alpha}_{m,n}\right)^k \abs{\hat u\psp{\alpha}_{m,n}}^2 h\psp{\alpha}_{m,n}
\leq C \norm{u}_{\HH^k_{\rho^\alpha}(B^2)}^2.
\end{equation}
The projection (truncation) operator $\proj\psp{\alpha}_N \colon
\LL^2_{\rho^\alpha}(B^2) \rightarrow \poly^2_N$ of \eqref{projection} here takes the form
\begin{equation}\label{projector-2d}
(\forall\,u\in\LL^2_{\rho^\alpha}(B^2)) \quad \proj\psp{\alpha}_N(u) = \sum_{m+n \leq N} \hat u\psp{\alpha}_{m,n} \, P\psp{\alpha}_{m,n}.
\end{equation}

\begin{proposition}[Connection coefficients between Zernike
polynomials]\label{pro:Zernike-CC} If $\alpha, \gamma > -1$ and $(m, n) \in
\natural_0 \times \natural_0$,
\begin{multline*}
P\psp{\alpha}_{m,n} = \frac{\Gamma(m+1) \Gamma(n+1) \Gamma(\alpha+1)}{\Gamma(\alpha + m + 1) \Gamma(\alpha + n + 1) \Gamma(\gamma+1)} \sum_{k=0}^{\min(m,n)} \Bigg[ \frac{\poch{\alpha-\gamma}{k} \Gamma(\alpha+m+n-k+1)}{\Gamma(k+1)}\\
\times \frac{\Gamma(\gamma+m-k+1) \Gamma(\gamma+n-k+1) (\gamma+m+n-2k+1)}{\Gamma(m-k+1) \Gamma(n-k+1) \Gamma(\gamma + m + n - k + 2)} \Bigg]
P\psp{\gamma}_{m-k,n-k}.
\end{multline*}
\end{proposition}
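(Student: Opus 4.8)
The plan is to strip off the common angular--radial factor exhibited in \eqref{ev-polar-separation} and thereby reduce the claimed bivariate identity to the univariate Jacobi connection formula \eqref{Jac-from-AAR}. Set $\mu := \min(m,n)$ and $\beta := \abs{m-n}$. Since $0 \le k \le \mu$, the pair $(m-k,n-k)$ has the same difference and the same ordering as $(m,n)$, whence $\min(m-k,n-k) = \mu - k$ and $\abs{(m-k)-(n-k)} = \beta$; consequently \eqref{ev-polar-separation} writes $P\psp{\alpha}_{m,n}$ as an explicit constant times $r^{\beta} e^{\imath(m-n)\theta}\,\Jac[\mu]{\alpha}{\beta}(2r^2-1)$ and each $P\psp{\gamma}_{m-k,n-k}$ as another explicit constant times the \emph{same} factor $r^{\beta} e^{\imath(m-n)\theta}$ times $\Jac[\mu-k]{\gamma}{\beta}(2r^2-1)$. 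Cancelling $r^{\beta} e^{\imath(m-n)\theta}$, the assertion reduces to an identity expressing $\Jac[\mu]{\alpha}{\beta}$ as a linear combination of the $\Jac[\mu-k]{\gamma}{\beta}$, $k = 0,\dots,\mu$; this is manifestly a statement about $\mu$, $\beta$, $\alpha$ and $\gamma$ only, consistent with the claimed scalar coefficients being invariant under $m\leftrightarrow n$.

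Next I would invoke \eqref{Jac-from-AAR} to expand $\Jac[\mu]{\alpha}{\beta}$ in the basis $\{\Jac[j]{\gamma}{\beta}\}_{j=0}^{\mu}$: in the notation of \eqref{Jac-from-AAR} this amounts to taking its ``$\gamma$'' to be our $\alpha$, its ``$\alpha$'' to be our $\gamma$, its ``$\beta$'' to be our $\beta = \abs{m-n}$, and its degree ``$n$'' to be $\mu$. Because $\alpha, \gamma > -1$ and $\beta \ge 0$ one has $\gamma + \beta > -1$, so the non-degenerate form of \eqref{Jac-from-AAR} applies and the limiting remark following it is not needed. This produces $\Jac[\mu]{\alpha}{\beta} = \sum_{j=0}^{\mu} c_j \Jac[j]{\gamma}{\beta}$ with each $c_j$ an explicit ratio of Pochhammer symbols and Gamma functions; after the substitution $j = \mu - k$ the sum runs over $k \in \{0,\dots,\mu\}$, the quantities ``$(\gamma-\alpha)_{n-k}$'' and ``$\Gamma(n-k+1)$'' appearing in \eqref{Jac-from-AAR} turn into $\poch{\alpha-\gamma}{k}$ and $\Gamma(k+1)$, and, restoring the prefactors of \eqref{ev-polar-separation}, we obtain $P\psp{\alpha}_{m,n}$ as a finite linear combination of the $P\psp{\gamma}_{m-k,n-k}$ with fully explicit coefficients.

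It then remains to simplify, for each $k$, the product of the prefactor of $P\psp{\alpha}_{m,n}$ from \eqref{ev-polar-separation}, the reciprocal of the prefactor of $P\psp{\gamma}_{m-k,n-k}$ from \eqref{ev-polar-separation}, and the coefficient $c_{\mu-k}$ coming from \eqref{Jac-from-AAR}, into the bracketed expression of the statement. Rewriting every $\poch{a}{l}$ as $\Gamma(a+l)/\Gamma(a)$ --- legitimate since each base occurring (e.g.\ $\beta+1$, $\gamma+\beta+1$, $\alpha+\mu+\beta+1$, $\gamma+\mu+\beta+2$) is strictly positive when $\alpha,\gamma > -1$ --- one checks that, for instance, the factors $\poch{\alpha-\gamma}{k}/\Gamma(k+1)$, $(\gamma+m+n-2k+1)$, $\Gamma(\gamma+m-k+1)/\Gamma(m-k+1)$ and $\Gamma(\mu+1)\Gamma(\alpha+1)/\!\big(\Gamma(\alpha+\mu+1)\Gamma(\gamma+1)\big)$ come out verbatim, while the remaining ratio collapses; the one spot requiring a little care is the standalone factor $\gamma+\beta+1$ left in the denominator by the term ``$\alpha+\beta+1$'' of \eqref{Jac-from-AAR}, which is absorbed through $\Gamma(\gamma+\beta+2) = (\gamma+\beta+1)\,\Gamma(\gamma+\beta+1)$. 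What survives is exactly $\Gamma(n+1)\Gamma(\alpha+m+n-k+1)\Gamma(\gamma+n-k+1)\big/\!\big(\Gamma(\alpha+n+1)\Gamma(n-k+1)\Gamma(\gamma+m+n-k+2)\big)$ when $\mu = m$, and its $m\leftrightarrow n$ mirror when $\mu = n$; in either case, combined with the verbatim factors, this reconstitutes the (symmetric) bracketed coefficient of the statement. I expect this Gamma-function bookkeeping to be the only real obstacle, and it is entirely mechanical; as a sanity check, setting $\gamma = \alpha$ annihilates $\poch{\alpha-\gamma}{k}$ for $k \ge 1$ and sends the $k = 0$ coefficient to $1$, recovering the trivial identity $P\psp{\alpha}_{m,n} = P\psp{\alpha}_{m,n}$.
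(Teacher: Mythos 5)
Your proposal is correct and follows essentially the same route as the paper: expand the Jacobi factor $\Jac[\min(m,n)]{\alpha}{\abs{m-n}}$ via the connection formula \eqref{Jac-from-AAR} (with the roles of the parameters swapped exactly as you indicate), reindex with $k=\min(m,n)-j$, reinstate the normalizing prefactors of \eqref{ev-polar-separation} so that $P\psp{\gamma}_{m-k,n-k}$ appears, and carry out the Gamma-function bookkeeping, including the absorption of the stray factor $\gamma+\abs{m-n}+1$ via $\Gamma(x+1)=x\,\Gamma(x)$. Your observation that $\gamma+\abs{m-n}+1>0$ rules out the degenerate case of \eqref{Jac-from-AAR} is a correct (and welcome) extra check.
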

\begin{proof} From the definition \eqref{ev-polar-separation} of
$P\psp{\alpha}_{m,n}$, using \eqref{Jac-from-AAR} to expand
$P\psp{\alpha,\abs{m-n}}_{\min(m,n)}$ in terms of $P\psp{\gamma,\abs{m-n}}_j$,
$j \in \{0, \dotsc, \min(m,n)\}$, expanding Pochhammer symbols with suitable
arguments into ratios of gamma functions, using the basic property $x \,
\Gamma(x) = \Gamma(x+1)$, the fact that $\min(m,n) + \abs{m-n} = \max(m,n)$,
the fact that for any commutative function $B \colon \natural_0 \times
\natural_0 \to \Real$ it holds that $B(\min(m,n),\max(m,n)) = B(m,n)$ and some
cancellations, we find that
\begin{multline*}
P\psp{\alpha}_{m,n}
= \frac{\Gamma(m+1) \Gamma(n+1) \Gamma(\alpha + 1)}{\Gamma(\alpha + m + 1) \Gamma(\alpha + n + 1)}
\sum_{j=0}^{\min(m,n)} \frac{\poch{\alpha-\gamma}{\min(m,n)-j} \Gamma(\alpha + \max(m,n) + 1 + j)}{\Gamma(\min(m,n)-j+1)}\\
\times \frac{ \Gamma(\gamma + \abs{m-n} + 1 + j) (\gamma + \abs{m-n} + 2j + 1) }{ \Gamma(\abs{m-n} + 1 + j) \Gamma(\gamma + \max(m,n) + 2 +j) }
r^{\abs{m-n}} e^{\imath (m-n) \theta} \Jac[j]{\gamma}{\abs{m-n}}(2r^2-1).
\end{multline*}
On defining $m_j := j + \max(m-n,0)$ and $n_j := j + \max(n-m,0)$ and noting
that
\begin{equation*}
m_j \geq 0, \quad n_j \geq 0, \quad m - n = m_j - n_j \quad\text{and}\quad j = \min(m_j, n_j),
\end{equation*}
we find that dividing and multiplying each term of the above sum by
$\frac{\Gamma(j+1) \Gamma(\gamma+1)}{\Gamma(j + \gamma + 1)}$ we can make
$P\psp{\gamma}_{m_j,n_j}$ appear. Substituting the summation variable for $k =
\min(m,n) - j$ (wherein $m_j$ and $n_j$ turn into $m-k$ and $n-k$,
respectively) and using $\abs{m-n} + 2 \min(m,n) = m + n$ plus some of the
previously used identities we obtain the desired result after a number of
elementary cancellations.
\end{proof}

We can now deduce some simple relations between Zernike polynomials which will be useful later to express the expansion coefficients of the derivatives of a function in terms of the expansions coefficients of the function itself.
Related identities including three term recurrences appear in \cite[\S~5]{Wunsche:2005}; \eqref{dxpidy} and \eqref{idxpdy} appear in \cite{Janssen:2014} in the case $\alpha = 0$.

\begin{proposition}\label{pro:extra-expansions} If $\alpha > -1$, then for
$(m,n) \in \natural_0 \times \natural_0$, we have the parameter-raising
expansion
\begin{equation}\label{parameter-raise}
(m + n + \alpha + 1) P\psp{\alpha}_{m,n} = \frac{(m + \alpha + 1) (n + \alpha + 1)}{\alpha + 1} P\psp{\alpha+1}_{m,n} - \frac{m \, n}{\alpha + 1} P\psp{\alpha+1}_{m-1, n-1},
\end{equation}
and the same-parameter expansions with respect to first order derivatives
\begin{equation}\label{dxpidy}
(m + n + \alpha + 1) P\psp{\alpha}_{m,n}
= \frac{n + \alpha + 1}{n + 1} \partial_{z^*} P\psp{\alpha}_{m,n+1} - \frac{m}{m + \alpha} \partial_{z^*} P\psp{\alpha}_{m-1, n}
\end{equation}
and
\begin{equation}\label{idxpdy}
(m + n + \alpha + 1) P\psp{\alpha}_{m,n}
= \frac{m + \alpha + 1}{m + 1} \partial_z P\psp{\alpha}_{m+1,n} - \frac{n}{n + \alpha} \partial_z P\psp{\alpha}_{m,n-1}.
\end{equation}
\end{proposition}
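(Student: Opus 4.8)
The plan is to derive all three identities from the connection coefficients of \autoref{pro:Zernike-CC} together with the differentiation formulas \eqref{rotated-diffs}. For \eqref{parameter-raise}, I would specialize \autoref{pro:Zernike-CC} to $\gamma = \alpha+1$, so that $\poch{\alpha-\gamma}{k} = \poch{-1}{k}$ vanishes for $k \geq 2$. Hence the sum over $k \in \{0,\dots,\min(m,n)\}$ collapses to just the terms $k=0$ and $k=1$, giving $P\psp{\alpha}_{m,n}$ as an explicit linear combination of $P\psp{\alpha+1}_{m,n}$ and $P\psp{\alpha+1}_{m-1,n-1}$. It then remains to simplify the two coefficients that come out of the general formula — expanding the Pochhammer symbols $\poch{-1}{0} = 1$ and $\poch{-1}{1} = -1$, using $x\,\Gamma(x) = \Gamma(x+1)$ repeatedly, and cancelling — into the clean form displayed in \eqref{parameter-raise}. (One should check the degenerate cases $m=0$ or $n=0$ separately, but there the convention \eqref{convention} makes the $k=1$ term vanish and the identity reduces to the trivial $(\,n+\alpha+1)P\psp{\alpha}_{0,n} = \tfrac{(\alpha+1)(n+\alpha+1)}{\alpha+1}P\psp{\alpha+1}_{0,n}$, which is immediate from \eqref{ev-polar-separation}.)

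For \eqref{dxpidy} and \eqref{idxpdy}, I would start from \eqref{parameter-raise} and use the differentiation formulas \eqref{rotated-diffs} in reverse: the first relation in \eqref{rotated-diffs} says $P\psp{\alpha+1}_{m,n-1}$ is, up to an explicit scalar, $\partial_{z^*}P\psp{\alpha}_{m,n}$, equivalently $P\psp{\alpha+1}_{m,n} = \tfrac{\alpha+1}{(m+\alpha+1)(n+1)}\partial_{z^*}P\psp{\alpha}_{m,n+1}$ and $P\psp{\alpha+1}_{m-1,n-1} = \tfrac{\alpha+1}{(m+\alpha)\,n}\,\partial_{z^*}P\psp{\alpha}_{m-1,n}$ after the appropriate index shifts. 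Substituting these two expressions into \eqref{parameter-raise} and simplifying the resulting coefficients yields \eqref{dxpidy}; the identity \eqref{idxpdy} follows in the same way using instead the second formula in \eqref{rotated-diffs}, namely $P\psp{\alpha+1}_{m-1,n} = \tfrac{\alpha+1}{m(n+\alpha+1)}\partial_z P\psp{\alpha}_{m,n}$, with the corresponding shifts $m \mapsto m+1$ and $n \mapsto n-1$. Alternatively, \eqref{idxpdy} is simply the complex conjugate / $(m \leftrightarrow n)$ mirror of \eqref{dxpidy}, since $\overline{P\psp{\alpha}_{m,n}} = P\psp{\alpha}_{n,m}$ and $\overline{\partial_{z^*}} = \partial_z$, so it need not be proved independently.

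The only real bookkeeping obstacle is making sure the index shifts in \eqref{rotated-diffs} are applied consistently and that the denominators introduced — factors like $n+1$, $m+\alpha$, $m(n+\alpha+1)$ — are matched against the numerator Pochhammer/gamma factors coming from \eqref{parameter-raise} so that everything telescopes to the stated coefficients; this is routine but needs care near $m=0$ or $n=0$, where some terms must be read through the convention \eqref{convention}. I expect the coefficient simplification in the $\gamma = \alpha+1$ specialization of \autoref{pro:Zernike-CC} to be the main (still elementary) computational step, after which \eqref{dxpidy} and \eqref{idxpdy} are essentially corollaries obtained by substitution.
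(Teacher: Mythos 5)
Your proposal is correct and follows essentially the same route as the paper: \eqref{parameter-raise} is obtained by specializing \autoref{pro:Zernike-CC} to $\gamma=\alpha+1$ (where $\poch{-1}{k}$ kills all terms with $k\geq 2$), and \eqref{dxpidy}, \eqref{idxpdy} then follow by substituting suitably shifted instances of \eqref{rotated-diffs} into \eqref{parameter-raise}. The coefficient simplifications and edge cases you flag work out exactly as you describe.
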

\begin{proof} We obtain \eqref{parameter-raise} from \autoref{pro:Zernike-CC}
by setting $\gamma = \alpha + 1$. Combining \eqref{parameter-raise} with
adequate shifts of the relations in \eqref{rotated-diffs} yields \eqref{dxpidy}
and \eqref{idxpdy}.
\end{proof}

\begin{proposition}\label{pro:decayRate} Let $u \in
\HH^k_{\rho^\alpha}(B^2)$. Then,
\begin{equation*}
(\forall\,(m,n) \in \natural_0 \times \natural_0) \quad
\lim_{L \to \infty} L^{k-\alpha-1/2} \, \hat u\psp{\alpha}_{m+L,n+L} = 0.
\end{equation*}

\end{proposition}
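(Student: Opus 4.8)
The plan is to read off the decay directly from the summability estimate \eqref{sequences-2d} and then feed in the large-index asymptotics of the eigenvalues and of the squared norms.

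First I would note that, since $u \in \HH^k_{\rho^\alpha}(B^2)$, the right-hand side of \eqref{sequences-2d} is finite, so the series
\begin{equation*}
\sum_{(p,q) \in \natural_0 \times \natural_0} \left(\lambda\psp{\alpha}_{p,q}\right)^k \abs{\hat u\psp{\alpha}_{p,q}}^2 h\psp{\alpha}_{p,q}
\end{equation*}
converges. Being a convergent series of non-negative terms indexed by a countable set, for every $\varepsilon > 0$ all but finitely many of its terms are smaller than $\varepsilon$; since, for fixed $(m,n)$, the indices $(m+L,n+L)$ leave every finite subset of $\natural_0 \times \natural_0$ as $L \to \infty$, it follows that
\begin{equation*}
\left(\lambda\psp{\alpha}_{m+L,n+L}\right)^k \abs{\hat u\psp{\alpha}_{m+L,n+L}}^2 h\psp{\alpha}_{m+L,n+L} \longrightarrow 0 \quad\text{as } L \to \infty.
\end{equation*}

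Next I would compute the growth of the factor multiplying $\abs{\hat u\psp{\alpha}_{m+L,n+L}}^2$. From \eqref{eigenvalues-2d}, $\lambda\psp{\alpha}_{m+L,n+L} = (m+n+2L)(m+n+2L+2+2\alpha) \sim 4 L^2$ as $L \to \infty$. From \eqref{squaredNorm-2d}, applying the asymptotic ratio formula \eqref{GRA} to each of the two gamma-function ratios (giving $\Gamma(m+L+1)/\Gamma(m+L+\alpha+1) \sim L^{-\alpha}$ and likewise for the $n$ factor) and using $1/(m+n+2L+\alpha+1) \sim \tfrac12 L^{-1}$, one gets $h\psp{\alpha}_{m+L,n+L} \sim \tfrac12 \pi\, \Gamma(\alpha+1)^2\, L^{-2\alpha-1}$. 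Multiplying, $\left(\lambda\psp{\alpha}_{m+L,n+L}\right)^k h\psp{\alpha}_{m+L,n+L} \sim c\, L^{2k-2\alpha-1}$ with $c = 2^{2k-1}\pi\,\Gamma(\alpha+1)^2 > 0$. Dividing the previous limit by this positive equivalent yields $L^{2k-2\alpha-1} \abs{\hat u\psp{\alpha}_{m+L,n+L}}^2 \to 0$, that is, $\bigl(L^{k-\alpha-1/2}\,\abs{\hat u\psp{\alpha}_{m+L,n+L}}\bigr)^2 \to 0$, which is the asserted limit.

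There is no substantial obstacle here: the argument is essentially ``tail of a convergent series tends to zero, rescaled by an explicit asymptotic weight.'' The only points demanding a little care are the justification that the terms vanish specifically along the diagonally shifted indices $(m+L,n+L)$ (handled by the finite-exceptional-set argument above) and bookkeeping the exponents when invoking \eqref{GRA} three times.
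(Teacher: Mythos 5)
Your argument is correct and follows essentially the same route as the paper: both deduce from \eqref{sequences-2d} that the terms $\left(\lambda\psp{\alpha}_{m+L,n+L}\right)^k \abs{\hat u\psp{\alpha}_{m+L,n+L}}^2 h\psp{\alpha}_{m+L,n+L}$ tend to zero and then divide by the asymptotic equivalents $\lambda\psp{\alpha}_{m+L,n+L} \sim 4L^2$ and $h\psp{\alpha}_{m+L,n+L} \sim \tfrac{\pi}{2}\,\Gamma(\alpha+1)^2 L^{-1-2\alpha}$ obtained via \eqref{GRA}. Your explicit justification that the terms vanish along the diagonally shifted indices is a detail the paper leaves implicit, but the substance is identical.
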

\begin{proof} Using the forms of $\lambda\psp{\alpha}_{m+L,n+L}$ and
$h\psp{\alpha}_{m+L,n+L}$ which stem from \eqref{eigenvalues-2d} and
\eqref{squaredNorm-2d}, respectively, and applying the asymptotic formula
\eqref{GRA} on the ratio of gamma functions therein we obtain
\begin{equation*}
\lambda\psp{\alpha}_{m+L,n+L} \sim 4 L^2
\qquad\text{and}\qquad
h\psp{\alpha}_{m+L,n+L} \sim \pi \, \Gamma(\alpha+1)^2 \, 2^{-1} L^{-1-2\alpha}
\qquad\text{as } L \to \infty.
\end{equation*}
Combining this with the fact (coming from \eqref{sequences-2d}) that
\begin{equation*}
\lim_{L \to \infty} (\lambda\psp{\alpha}_{m+L,n+L})^k \, \abs[n]{u\psp{\alpha}_{m+L,n+L}}^2 \,
h\psp{\alpha}_{m+L,n+L} = 0
\end{equation*}
we obtain the desired result.
\end{proof}

\begin{lemma}\label{lem:coefs-of-diffs} Let $\alpha > -1$ and
\begin{equation}\label{coefs-of-diff-hyp}
u \in \HH^k_{\rho^\alpha}(B^2)
\quad\text{with}\quad
k = \begin{cases} 1 & \text{if } \alpha \in [-1/2, \infty),\\ 2 & \text{if } \alpha \in (-1, -1/2). \end{cases}
\end{equation}
Then, the
coefficients of the Fourier--Zernike series \eqref{expansion-2d} of
$\partial_{z^*} u$ and $\partial_z u$ can be expressed in terms of the coefficients
of the corresponding series of $u$ according to
\begin{align}
\label{coefs-of-diff-1}
\widehat{\left(\partial_{z^*} u\right)}\psp{\alpha}_{m,n} & = (m + n + \alpha + 1) \sum_{l=0}^\infty \frac{\poch{m+1}{l}}{\poch{m+\alpha+1}{l}} \frac{\poch{n+1}{l+1}}{\poch{n+\alpha+1}{l+1}} \, \hat u\psp{\alpha}_{m+l, n+1+l}
\intertext{and}
\label{coefs-of-diff-2}
\widehat{\left(\partial_z u\right)}\psp{\alpha}_{m,n} & = (m + n + \alpha + 1) \sum_{l=0}^\infty \frac{\poch{m+1}{l+1}}{\poch{m+\alpha+1}{l+1}} \frac{\poch{n+1}{l}}{\poch{n+\alpha+1}{l}} \, \hat u\psp{\alpha}_{m+1+l, n+l}
\end{align}
for $(m, n) \in \natural_0 \times \natural_0$.
\end{lemma}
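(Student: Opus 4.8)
The strategy is to invert the three-term relations of \autoref{pro:extra-expansions} by iteration. Concretely, I would start from \eqref{dxpidy}, which---after taking $\LL^2_{\rho^\alpha}(B^2)$ inner products of the expansion \eqref{expansion-2d} of $u$ against $P\psp{\alpha}_{m,n}$ and using orthogonality---expresses a single coefficient $\widehat{(\partial_{z^*} u)}\psp{\alpha}_{m,n}$ of $\partial_{z^*} u$ in terms of two neighboring coefficients of $u$, namely $\hat u\psp{\alpha}_{m,n+1}$ and $\hat u\psp{\alpha}_{m-1,n}$, shifted by the indexing built into \eqref{dxpidy}. The point is that the ``error'' term $\hat u\psp{\alpha}_{m-1,n}$ carries strictly smaller first index than the ``main'' term; iterating the relation $l$ times therefore pushes the error term to $\hat u\psp{\alpha}_{m-1-\ell,\dotsc}$ and eventually off the index set (using the convention \eqref{convention} that $P\psp{\alpha}_{m,n} \equiv 0$ for negative indices, which kills the boundary term). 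Collecting the products of the scalar prefactors appearing along the iteration yields the Pochhammer-ratio coefficients in \eqref{coefs-of-diff-1}; the analogous iteration of \eqref{idxpdy}, now peeling off the second index, yields \eqref{coefs-of-diff-2}.

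More precisely, I would rewrite \eqref{dxpidy} (after the appropriate index shift $m \mapsto m+l$, $n \mapsto n+l$) to solve for $\partial_{z^*}P\psp{\alpha}_{m+\ell,n+1+\ell}$ in terms of $(m+n+2\ell+\alpha+1)P\psp{\alpha}_{m+\ell,n+\ell}$ and $\partial_{z^*}P\psp{\alpha}_{m+\ell-1,n+\ell}$, pair each side with $u$, and substitute recursively. A clean bookkeeping device is to prove by induction on $L$ the finite identity
\begin{equation*}
\widehat{(\partial_{z^*} u)}\psp{\alpha}_{m,n}
= (m+n+\alpha+1)\sum_{l=0}^{L-1} c_l\,\hat u\psp{\alpha}_{m+l,n+1+l} \;+\; R_L,
\end{equation*}
where $c_l=\frac{\poch{m+1}{l}}{\poch{m+\alpha+1}{l}}\frac{\poch{n+1}{l+1}}{\poch{n+\alpha+1}{l+1}}$ and the remainder $R_L$ is a single inner product of $u$ against a scalar multiple of $\partial_{z^*}P\psp{\alpha}_{m+L-1,n+L}$; equivalently, $R_L$ is a constant times $\widehat{(\partial_{z^*}u)}\psp{\alpha}_{\cdot,\cdot}$ evaluated at shifted indices, or can be re-expressed through \eqref{rotated-diffs} as a constant times $\hat u\psp{\alpha}_{m+L,n+L}$ with a prefactor growing polynomially in $L$. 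The inductive step is just one more application of the shifted \eqref{dxpidy} together with the Pochhammer splitting $\poch{a}{l+1}=\poch{a}{l}(a+l)$ to see that the prefactors telescope into the claimed ratios.

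The main obstacle is \textbf{convergence of the resulting series and vanishing of the remainder}: one must show $R_L \to 0$ as $L\to\infty$ and that the infinite sum on the right-hand side converges. This is exactly where the regularity hypothesis \eqref{coefs-of-diff-hyp} and \autoref{pro:decayRate} enter. Tracking the gamma-function asymptotics via \eqref{GRA}, the coefficient $c_l$ behaves like $l^{-2\alpha}$ (a ratio of the form $\Gamma(m+1+l)/\Gamma(m+\alpha+1+l)$ times its $n$-counterpart), while by \autoref{pro:decayRate} the coefficient $\hat u\psp{\alpha}_{m+l,n+1+l}$ is $o(l^{\alpha+1/2-k})$; hence the general term of the series is $o(l^{-\alpha-1/2-k+\text{const}})$, which is summable precisely when $k$ is large enough relative to $\alpha$---and the two cases in \eqref{coefs-of-diff-hyp} are calibrated so that $k-\alpha-1/2>1$ fails only mildly, forcing $k=2$ in the range $\alpha\in(-1,-1/2)$ and allowing $k=1$ once $\alpha\ge -1/2$. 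The same asymptotic estimate, applied to the explicit prefactor multiplying $\hat u\psp{\alpha}_{m+L,n+L}$ in $R_L$ (which grows like $L^{1}$ from the $(m+n+\alpha+1)$-type factor after the $L$ shift), combined with \autoref{pro:decayRate}, gives $R_L\to 0$. Finally, identity \eqref{coefs-of-diff-2} is obtained verbatim from \eqref{idxpdy} by the symmetric argument---peeling the second index instead of the first---or, more slickly, by invoking the symmetry $P\psp{\alpha}_{m,n}\mapsto P\psp{\alpha}_{n,m}$ (complex conjugation together with $z\leftrightarrow z^*$) that interchanges \eqref{dxpidy} and \eqref{idxpdy}.
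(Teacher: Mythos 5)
Your plan shares the paper's skeleton (exploit \eqref{dxpidy}, obtain a finite identity with a remainder, kill the remainder via \autoref{pro:decayRate} and \eqref{GRA}, get \eqref{coefs-of-diff-2} by reflection), but the mechanism you describe for the recursion is wrong in a way that matters. You claim that iterating \eqref{dxpidy} produces an ``error'' coefficient whose \emph{first index decreases}, so that after finitely many steps it falls off the index set and the convention \eqref{convention} kills it. If that were so, the right-hand side of \eqref{coefs-of-diff-1} would be a finite sum and no hypothesis beyond $u\in\HH^1_{\rho^\alpha}(B^2)$ would be needed; but the sum is genuinely infinite, and \autoref{pro:W12-not-enough} shows the formula actually \emph{fails} for some $u\in\HH^1_{\rho^\alpha}(B^2)$ when $\alpha\in(-1,-1/2)$. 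The correct recursion, obtained by comparing two expansions of $\partial_{z^*}u$ in the $\LL^2_{\rho^{\alpha+1}}(B^2)$-orthogonal system $\{\partial_{z^*}P\psp{\alpha}_{m,n}\}_{n\ge1}$, is
\begin{equation*}
\frac{v_{m,n}}{m+n+\alpha+1}
= \frac{n+1}{n+\alpha+1}\,\hat u\psp{\alpha}_{m,n+1}
+ \frac{(n+1)(m+1)}{(m+n+\alpha+3)(n+\alpha+1)(m+\alpha+1)}\, v_{m+1,n+1},
\end{equation*}
with $v_{m,n}=\widehat{(\partial_{z^*}u)}\psp{\alpha}_{m,n}$: the remainder involves a coefficient of $\partial_{z^*}u$ at indices $(m+L,n+L)$ that \emph{grow}, never leave the index set, and must be shown to tend to zero---this is exactly where \eqref{coefs-of-diff-hyp} enters.

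Two further steps are missing or misstated. First, ``pair each side with $u$'' does not produce what you need: $\langle u,\partial_{z^*}P\psp{\alpha}_{a,b}\rangle_{\LL^2_{\rho^\alpha}(B^2)}$ is not a multiple of any coefficient $\widehat{(\partial_{z^*}u)}\psp{\alpha}_{\cdot,\cdot}$ (by \eqref{rotated-diffs} it is an inner product of $u$ against $P\psp{\alpha+1}_{a,b-1}$ with the mismatched weight $\rho^\alpha$, and integrating by parts brings in derivatives of the weight and boundary terms that are delicate for $\alpha\le 0$). To link the two sides one must (i) justify term-by-term differentiation of the Fourier--Zernike series of $u$, which uses $\HH^1_{\rho^\alpha}(B^2)\subseteq\HZ_\alpha(B^2)$ (\autoref{pro:chainIngredient}) and convergence in $\WZ_\alpha(B^2)$, and (ii) justify substituting \eqref{dxpidy} into the expansion of $\partial_{z^*}u$ and rearranging the double series, which requires showing that the two boundary partial sums vanish in $\LL^2_{\rho^{\alpha+1}}(B^2)$ before one may compare coefficients; neither appears in your plan. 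Second, your convergence bookkeeping is off: with $c_l\sim l^{-2\alpha}$ and $\hat u\psp{\alpha}_{m+l,n+1+l}=o(l^{\alpha+1/2-k})$ the general term is only $o(l^{1/2-\alpha-k})$, which is \emph{not} absolutely summable in, e.g., the admitted case $\alpha=0$, $k=1$; convergence of the series is instead a by-product of the identity-with-remainder once $R_L\to0$, and the correct threshold is $k\ge 1/2-\alpha$ (coming from $R_L\sim C L^{-1-2\alpha}v_{m+L,n+L}$ and $v_{m+L,n+L}=o(L^{\alpha+3/2-k})$), not the inequality you state.
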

\begin{proof}
Let us abbreviate $v_{m,n} =
\widehat{\left(\partial_{z^*} u\right)}\psp{\alpha}_{m,n}$. Then, using
\eqref{dxpidy}, the fact that $P\psp{\alpha}_{-1,n} \equiv 0$ and $\partial_{z^*}
P\psp{\alpha}_{m,0} \equiv 0$ (cf.\ \eqref{convention} and
\eqref{rotated-diffs}) and careful index tracking we have that given $M, N \in
\natural$,
\begin{multline}\label{threeParts}
\sum_{m=0}^M \sum_{n=0}^N v_{m,n} \, P\psp{\alpha}_{m,n}
= \sum_{m=0}^{M-1} \sum_{n=1}^N \left[ \frac{v_{m,n-1}}{m+n+\alpha} \frac{n+\alpha}{n} - \frac{v_{m+1,n}}{m+n+\alpha+2} \frac{m+1}{m+\alpha+1} \right] \partial_{z^*} P\psp{\alpha}_{m,n}\\
+ \underbrace{\sum_{m=0}^M \frac{v_{m,N}}{m+N+\alpha+1} \frac{N+\alpha+1}{N+1} \partial_{z^*} P\psp{\alpha}_{m,N+1}}_{:= S_{M,N}}
+ \underbrace{\sum_{n=0}^{N-1} \frac{v_{M,n}}{M+n+\alpha+1} \frac{n+\alpha+1}{n+1} \partial_{z^*} P\psp{\alpha}_{M,n+1}}_{:= T_{M,N}}.
\end{multline}
Taking the square of the $\LL^2_{\rho^\alpha}(B^2)$ norm of $S_{M,N}$, using
the $\LL^2_{\rho^{\alpha+1}}(B^2)$-orthogonality of the terms that comprise it
(which comes from \eqref{rotated-diffs}), substituting the resulting
$h\psp{\alpha+1}_{m,N}$ with the products $h\psp{\alpha}_{m,N} \left(
h\psp{\alpha+1}_{m,N} / h\psp{\alpha}_{m,N} \right)$ and simplifying the gamma
functions appearing in the second factors (cf.\ \eqref{squaredNorm-2d}) we
obtain
\begin{equation*}
\norm{S_{M,N}}_{\LL^2_{\rho^{\alpha+1}}(B^2)}^2
= \sum_{m=0}^M \abs{v_{m,N}}^2 \frac{(N+\alpha+1) (m+\alpha+1)}{(m+N+\alpha+1) (m+N+\alpha+2)} \, h\psp{\alpha}_{m,N}
\leq \sum_{m=0}^M \abs{v_{m,N}}^2 \, h\psp{\alpha}_{m,N}.
\end{equation*}
As the $v_{m,N}$ are the coefficients of the expansion of the
$\LL^2_{\rho^\alpha}(B^2)$ function $\partial_{z^*} u$, it follows from the above
inequality and Parseval's identity \eqref{Parseval-2d} that $S_{M,N}
\xrightarrow{M,N \to \infty} 0$ in $\LL^2_{\rho^{\alpha+1}}(B^2)$. The
same argument leads to $T_{M,N} \xrightarrow{M,N \to \infty} 0$ in the same
space. As the left hand side of \eqref{threeParts} tends to $\partial_{z^*} u$ as $M, N \to \infty$ in $\LL^2_{\rho^{\alpha+1}}(B^2)$ (because it does so in the stronger $\LL^2_{\rho^\alpha}(B^2)$ norm) we conclude that
\begin{equation}\label{difficult-expansion}
\partial_{z^*} u = \sum_{m=0}^\infty \sum_{n=1}^\infty \left[ \frac{v_{m,n-1}}{m+n+\alpha} \frac{n+\alpha}{n} - \frac{v_{m+1,n}}{m+n+\alpha+2} \frac{m+1}{m+\alpha+1} \right] \partial_{z^*} P\psp{\alpha}_{m,n},
\end{equation}
the series converging in the $\LL^2_{\rho^{\alpha+1}}(B^2)$ sense.

On the other hand, per part \ref{it:W12-inclusion} of
\autoref{pro:chainIngredient}, $u$ itself is a member of $\HZ_\alpha(B^2)$ and
thus, per part \ref{it:eigenpairs-consequences} of \autoref{lem:allArePolys}, its Fourier--Zernike series as defined in \eqref{expansion-2d} converges
to $u$ in $\WZ_\alpha(B^2)$ and because of the structure of that norm (cf.\
\eqref{definition-of-WZ}) we have, again using the fact that $\partial_{z^*}
P\psp{\alpha}_{m,0} \equiv 0$,
\begin{equation}\label{easy-expansion}
\partial_{z^*} u = \sum_{m=0}^\infty \sum_{n=1}^\infty \hat u\psp{\alpha}_{m,n} \, \partial_{z^*} P\psp{\alpha}_{m,n},
\end{equation}
the series also converging in the $\LL^2_{\rho^{\alpha+1}}(B^2)$ sense.

As in the index range involved the $\partial_{z^*} P\psp{\alpha}_{m,n}$ are
non-zero and pairwise $\LL^2_{\rho^{\alpha+1}}(B^2)$-orthogonal, we can compare
the coefficents of the series \eqref{difficult-expansion} and
\eqref{easy-expansion} so as to obtain for $m \in \natural_0$ and $n \in
\natural_0$,
\begin{equation}\label{link}
\frac{1}{m+n+\alpha+1} v_{m,n}
= \frac{n+1}{n+\alpha+1} \hat u\psp{\alpha}_{m,n+1} + \frac{(n+1) (m+1)}{(m+n+\alpha+3) (n+\alpha+1) (m+\alpha+1)} v_{m+1,n+1}.
\end{equation}
An induction argument based on \eqref{link} can then justify that, for all
$(m,n) \in \natural_0 \times \natural_0$ and $L \in \natural_0$,
\begin{equation}\label{with-residual}
\frac{1}{m+n+\alpha+1} \, v_{m,n}
= \sum_{l=0}^L \frac{\poch{n+1}{l+1}}{\poch{n+\alpha+1}{l+1}} \frac{\poch{m+1}{l}}{\poch{m+\alpha+1}{l}} \, \hat u\psp{\alpha}_{m+l, n+1+l} + R_{m,n,\alpha,L+1}
\end{equation}
where
\begin{equation*}
R_{m,n,\alpha,L} := \frac{\poch{n+1}{L}}{\poch{n+\alpha+1}{L}} \frac{\poch{m+1}{L}}{\poch{m+\alpha+1}{L}} \frac{1}{m+n+2L+\alpha+1} \, v_{m+L, n+L}.
\end{equation*}
Now, expressing the Pochhammer symbols above as ratios of gamma functions and
using the asymptotic relation \eqref{GRA} we find
\begin{equation}\label{asymptotic-R}
R_{m,n,\alpha,L} \sim \frac{\Gamma(n+\alpha+1)}{\Gamma(n+1)} \frac{\Gamma(m+\alpha+1)}{\Gamma(m+1)} \, 2^{-1} L^{-1 -2 \alpha} \, v_{m+L,n+L}
\quad\text{as } L \to \infty.
\end{equation}
So far, we have only used that $u \in \HH^1_{\rho^\alpha}(B^2)$, which is
weaker than the hypothesis \eqref{coefs-of-diff-hyp} when $\alpha \in (-1,
-1/2)$.

As the $v_{m,n}$ are the Fourier--Zernike coefficients of the expansion of
$\partial_{z^*} u$ and the latter belongs to $\HH^{k-1}_{\rho^\alpha}(B^2)$ we
infer from \autoref{pro:decayRate} that $\lim_{L\to\infty} L^{k-\alpha-3/2}
v_{m+L,n+L} = 0$, which, together with the fact that $k \geq 1/2-\alpha$ (only
now we are making use of the full hypothesis \eqref{coefs-of-diff-hyp}) implies
that $\lim_{L \to \infty} R_{m,n,\alpha,L} = 0$. Thus, \eqref{coefs-of-diff-1}
is obtained from \eqref{with-residual}.

Let the reflection $A \colon B^2 \to B^2$ be defined by $A(x) = (x_1, -x_2)$ for all $x \in
B^2$. Then, $u \circ A \in
\HH^k_{\rho^\alpha}(B^2)$ as well and $\partial_z u \circ A = \partial_{z^*} (u \circ
A)$. This, together with \eqref{coefs-of-diff-1}, the readily verifiable
formulae $\rho \circ A = \rho$, $P\psp{\alpha}_{m,n} \circ A =
P\psp{\alpha}_{n,m}$ and $h\psp{\alpha}_{m,n} = h\psp{\alpha}_{n,m}$ and the
invariance of the Lebesgue measure with respect to reflections give
\eqref{coefs-of-diff-2}.
\end{proof}

The hypothesis $u \in \HH^2_{\rho^\alpha}(B^2)$ adopted in
\autoref{lem:coefs-of-diffs} when $\alpha \in (-1, -1/2)$ can be relaxed to $u$
belonging to certain interpolation spaces between
$\HH^1_{\rho^\alpha}(B^2)$ and $\HH^2_{\rho^\alpha}(B^2)$ as long as
the residual $R_{m,n,\alpha,L+1}$ of \eqref{with-residual} can be shown to tend
to $0$ as $L \to \infty$. However, the example below makes it clear that we
cannot relax the hypothesis all the way to the hypothesis $u \in
\HH^1_{\rho^\alpha}(B^2)$ befitting the case in which $\alpha \in [-1/2,
\infty)$.

\begin{proposition}\label{pro:W12-not-enough} Let $\alpha \in (-1, -1/2)$.
\begin{enumerate}
\item\label{it:cod1-fails} For all $(m_0, n_0) \in \natural_0 \times
\natural_0$ there exists $u \in \HH^1_{\rho^\alpha}(B^2)$ such that
\eqref{coefs-of-diff-1} fails for $(m,n) = (m_0+1, n_0)$.
\item\label{it:cod2-fails} For all $(m_0, n_0) \in \natural_0 \times
\natural_0$ there exists $u \in \HH^1_{\rho^\alpha}(B^2)$ such that
\eqref{coefs-of-diff-2} fails for $(m,n) = (m_0, n_0+1)$.
\end{enumerate}
\end{proposition}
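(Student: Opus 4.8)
The plan is to construct, for each prescribed target index, an explicit counterexample by exploiting the asymptotic relation \eqref{asymptotic-R} together with \autoref{pro:decayRate}. The key realization is that the claimed identity \eqref{coefs-of-diff-1}, at the index $(m,n) = (m_0+1, n_0)$, is equivalent (by the induction leading to \eqref{with-residual}) to the vanishing of the residual $R_{m_0+1,n_0,\alpha,L}$ as $L \to \infty$; and by \eqref{asymptotic-R} this residual behaves like a nonzero constant times $L^{-1-2\alpha} v_{m_0+1+L, n_0+L}$, where $v_{m,n} = \widehat{(\partial_{z^*} u)}\psp{\alpha}_{m,n}$. Since $\alpha \in (-1,-1/2)$ we have $-1-2\alpha > 0$, so the residual fails to vanish precisely when the "diagonal" coefficients $v_{m_0+1+L, n_0+L}$ of $\partial_{z^*} u$ decay no faster than $L^{1+2\alpha}$ (up to not decaying to zero after multiplication by $L^{-1-2\alpha}$). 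So it suffices to exhibit $u \in \HH^1_{\rho^\alpha}(B^2)$ whose partial derivative $\partial_{z^*} u$ has a prescribed slowly-decaying diagonal coefficient sequence while $u$ itself stays in $\HH^1_{\rho^\alpha}(B^2)$.

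First I would work backwards: pick the diagonal sequence of $v_{m,n}$, say $v_{m_0+1+L, n_0+L} := c_L$ for a sequence $c_L$ to be chosen and $v_{m,n} := 0$ off that diagonal, and reconstruct a candidate $u$ by solving the recursion \eqref{link} upward, or more directly by integrating: one wants $u$ with $\partial_{z^*} u = \sum_L c_L P\psp{\alpha+1}_{\ast}$-type series but it is cleaner to directly define $u$ through its own Fourier--Zernike coefficients. Using \eqref{rotated-diffs} (which expresses $\partial_{z^*} P\psp{\alpha}_{m,n}$ as a multiple of $P\psp{\alpha+1}_{m,n-1}$) and comparing with \eqref{easy-expansion}, one sees that prescribing the $v_{m,n}$ along a single off-origin diagonal forces (via the inverted recursion) a single-diagonal family of $\hat u\psp{\alpha}_{m,n}$; the membership $u \in \HH^1_{\rho^\alpha}(B^2)$ then reduces, by Parseval \eqref{Parseval-2d} applied to $u$, $\partial_z u$, $\partial_{z^*} u$ together with the asymptotics $\lambda\psp{\alpha}_{m+L,n+L} \sim 4L^2$ and $h\psp{\alpha}_{m+L,n+L} \sim \pi\,\Gamma(\alpha+1)^2 2^{-1} L^{-1-2\alpha}$ established in the proof of \autoref{pro:decayRate}, to the convergence of a single numerical series in $L$. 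A short computation shows that $u \in \HH^1_{\rho^\alpha}(B^2)$ iff $\sum_L |c_L|^2 L^{2} \cdot L^{-1-2\alpha} \cdot (\text{polynomial factors}) < \infty$, i.e. roughly $\sum_L |c_L|^2 L^{1-2\alpha} < \infty$. Choosing, for instance, $c_L \sim L^{\alpha - 1}(\log L)^{-1}$ (for $L$ large, and $c_L = 0$ for small $L$) makes this series converge since $2(\alpha-1) + (1-2\alpha) = -1$, giving a convergent $\sum 1/(L (\log L)^2)$; yet $L^{-1-2\alpha} c_L \sim L^{-2}(\log L)^{-1} \cdot L^{2\alpha}\cdot$—I should recheck the exponent—so that the residual $R_{m_0+1,n_0,\alpha,L}$ does \emph{not} tend to zero. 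One must also verify that the off-diagonal $v_{m,n}$ being zero is consistent, i.e. that the resulting $u$ genuinely has $\partial_{z^*} u$ with exactly those coefficients; this is automatic once $u \in \HH^1_{\rho^\alpha}(B^2)$ because then \eqref{easy-expansion} holds and the $\partial_{z^*} P\psp{\alpha}_{m,n}$ are orthogonal, so the coefficients of $\partial_{z^*} u$ are determined, and one checks they coincide with the prescribed $v_{m,n}$.

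Part \ref{it:cod2-fails} then follows from part \ref{it:cod1-fails} by the same reflection trick used at the end of the proof of \autoref{lem:coefs-of-diffs}: if $u$ witnesses the failure of \eqref{coefs-of-diff-1} at $(m_0+1, n_0)$, then $u \circ A$ (with $A(x) = (x_1, -x_2)$) lies in $\HH^1_{\rho^\alpha}(B^2)$, satisfies $\partial_z(u\circ A) = (\partial_{z^*} u)\circ A$, and using $P\psp{\alpha}_{m,n}\circ A = P\psp{\alpha}_{n,m}$, $h\psp{\alpha}_{m,n} = h\psp{\alpha}_{n,m}$ and invariance of Lebesgue measure, the failure of \eqref{coefs-of-diff-1} at $(m_0+1,n_0)$ for $u$ transfers to the failure of \eqref{coefs-of-diff-2} at $(n_0+1, m_0)$ for $u \circ A$; since $(m_0,n_0)$ was arbitrary this gives the stated conclusion after renaming. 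The main obstacle I anticipate is purely bookkeeping: carrying out the upward inversion of \eqref{link} to get a clean closed form for $\hat u\psp{\alpha}_{m,n}$ in terms of the chosen $c_L$ (it will again be a Pochhammer-ratio expression like the ones already appearing), and then matching the exponents of $L$ in the three Parseval sums for $u$, $\partial_z u$, $\partial_{z^*} u$ carefully enough to pin down the exact borderline decay rate of $c_L$ that keeps $u$ in $\HH^1_{\rho^\alpha}(B^2)$ while forcing $R_{m_0+1,n_0,\alpha,L} \not\to 0$. Everything else—the orthogonality arguments, the asymptotics, the reflection—is already available in the excerpt.
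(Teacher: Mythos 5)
Your overall strategy is the right one (force the residual in \eqref{with-residual}--\eqref{asymptotic-R} to survive while keeping $u\in\HH^1_{\rho^\alpha}(B^2)$, then transfer part \ref{it:cod2-fails} by the reflection $A$), but the construction as written does not produce a counterexample, and the obstruction is exactly the point your bookkeeping glosses over. By \eqref{asymptotic-R}, non-vanishing of $R_{m_0+1,n_0,\alpha,L}$ requires $\abs{v_{m_0+1+L,n_0+L}}\gtrsim L^{1+2\alpha}$ along some subsequence, while $\partial_{z^*}u\in\LL^2_{\rho^\alpha}(B^2)$ requires $\sum_L \abs{c_L}^2\,h\psp{\alpha}_{m_0+1+L,n_0+L}\cong\sum_L\abs{c_L}^2 L^{-1-2\alpha}<\infty$ (no extra factor $L^2$: your criterion $\sum\abs{c_L}^2L^{1-2\alpha}<\infty$ mixes up the $\WZ_\alpha$ norm, built from the eigenvalues $\lambda\psp{\alpha}_{m,n}$ and the $\rho^{\alpha+1}$-weighted gradient, with the $\HH^1_{\rho^\alpha}$ norm, which weighs the gradient with $\rho^\alpha$ directly). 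Your concrete choice $c_L\sim L^{\alpha-1}(\log L)^{-1}$ fails on the first count: $L^{-1-2\alpha}c_L\sim L^{-\alpha-2}(\log L)^{-1}\to 0$ because $-\alpha-2<-1$, so the residual does vanish and \eqref{coefs-of-diff-1} holds for that $u$. Worse, no choice supported on the \emph{whole} diagonal can work: taking $\abs{c_L}\sim L^{1+2\alpha}$ makes the $\LL^2_{\rho^\alpha}$ test series $\sum_L L^{1+2\alpha}$ diverge, since $1+2\alpha\in(-1,0)$. The missing idea is to put the mass on a lacunary set: the paper takes $v_{m,n}$ nonzero only at $(m,n)=(m_0+2^j+1,n_0+2^j)$, of size $\approx(2^j)^{1+2\alpha}$, so that the Parseval sum becomes the convergent geometric series $\sum_j(2^j)^{1+2\alpha}$ while $R_{m_0+1,n_0,\alpha,2^j}$ tends to a nonzero constant along $L=2^j$.

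A second gap is the reconstruction of $u$. Prescribing only the coefficients of $\partial_{z^*}u$ and "inverting \eqref{link}" does not by itself yield a function $u\in\HH^1_{\rho^\alpha}(B^2)$: to have membership in $\HH^1_{\rho^\alpha}$ you must also control $\partial_z u$, which your proposal never estimates, and the claim that consistency "is automatic once $u\in\HH^1_{\rho^\alpha}$" presupposes the membership you are trying to establish. The paper avoids this circularity by prescribing \emph{both} candidate derivatives, $v$ and $w$, as explicit $\LL^2_{\rho^\alpha}(B^2)$ Fourier--Zernike series, verifying via \eqref{rotated-diffs} that $\partial_z v-\partial_{z^*}w=0$ distributionally (a curl-free condition), invoking \cite[Theorem~2.9]{GR:1986} to obtain a potential $u\in\HH^1(B^2)$ with $\grad u=(v+w,\imath(v-w))$, and then using $\HH^1(B^2)\subseteq\LL^2_{\rho^\alpha}(B^2)$ to conclude $u\in\HH^1_{\rho^\alpha}(B^2)$ with $\partial_{z^*}u=v$, $\partial_z u=w$. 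Your reflection argument for part \ref{it:cod2-fails} is fine, but the core construction needs the lacunary choice and the potential (or an equivalent two-derivative) argument to close.
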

\begin{proof} For all $(m, n) \in \natural_0 \times \natural_0$ let
\begin{equation*}
v_{m,n} := \begin{cases}
\left(2^j\right)^{2\alpha-1} (m_0+2^j+\alpha+1) (n_0+2^j+1) & \begin{minipage}{14em} if there exists $j \in \natural_0$ such that $(m,n) = (m_0 + 2^j + 1, n_0 + 2^j)$,\vspace{1ex} \end{minipage}\\
0 & \text{otherwise.}
\end{cases}
\end{equation*}
Then, on account of \eqref{squaredNorm-2d} and the asymptotic formula
\eqref{GRA}, the sum
\begin{equation*}
\sum_{(m,n) \in \natural_0 \times \natural_0} \abs{v_{m,n}}^2 \norm[n]{P\psp{\alpha}_{m,n}}_{\LL^2_{\rho^\alpha}(B^2)}^2
= \sum_{j=0}^\infty (2^j)^{4\alpha-2} (m_0 + 2^j + \alpha + 1)^2 (n_0 + 2^j + 1)^2 h\psp{\alpha}_{m_0 + 2^j + 1, n_0 + 2^j}
\end{equation*}
is finite or infinite together with $ \sum_{j=0}^\infty (2^j)^{(4\alpha-2) + 2
+ 2 - 1 - \alpha - \alpha} = \sum_{j=0}^\infty (2^j)^{2\alpha+1} $; this last
expression being, indeed, finite (as $2\alpha + 1 < 0$), it transpires that
\begin{equation*}
v := \sum_{(m,n) \in \natural_0 \times \natural_0} v_{m,n} \, P\psp{\alpha}_{m,n} \in \LL^2_{\rho^\alpha}(B^2).
\end{equation*}
The same argument goes on to show that, on defining for all $(m, n) \in
\natural_0 \times \natural_0$
\begin{equation*}
w_{m,n} := \begin{cases}
\left(2^j\right)^{2\alpha-1} (m_0+2^j+1) (n_0+2^j+\alpha+1) & \begin{minipage}{14em} if there exists $j \in \natural_0$ such that $(m,n) = (m_0 + 2^j, n_0 + 2^j + 1)$,\vspace{1ex} \end{minipage}\\
0 & \text{otherwise,}
\end{cases}
\end{equation*}
one then has
\begin{equation*}
w := \sum_{(m,n) \in \natural_0 \times \natural_0} w_{m,n} \, P\psp{\alpha}_{m,n} \in \LL^2_{\rho^\alpha}(B^2).
\end{equation*}
Using the differentiation identities in \eqref{rotated-diffs} it can be checked
that the choice of the coefficients $v_{m,n}$ and $w_{m,n}$ yields $\partial_z v
- \partial_{z^*} w = 0$ in the sense of distributions. From this and the
definition of the differential operators involved given below
\eqref{rotated-diffs} it follows that the curl of $(v + w, \imath(v - w)) \in
[\LL^2_{\rho^\alpha}(B^2)]^2 \subseteq [\LL^2(B^2)]^2$ is null in the sense of
distributions. Therefore there exists $u \in \HH^1(B^2)$, unique up to an
additive constant, such that $\grad u = (v + w, \imath(v - w))$ (cf.\
\cite[Theorem~2.9]{GR:1986}). Now, $\HH^1(B^2) \subseteq
\HH^1_{\rho^{\alpha+2}}(B^2) \subseteq \LL^2_{\rho^\alpha}(B^2)$ (see
\cite[Theorem~8.2]{Kufner:1985} for the latter inclusion, which holds with
continuous embedding). In this way we have constructed $u \in
\HH^1_{\rho^\alpha}(B^2)$ such that $\partial_{z^*} u = v$ and $\partial_z u =
w$.

On the other hand, from \eqref{with-residual} in the proof of
\autoref{lem:coefs-of-diffs} we know that \eqref{coefs-of-diff-1} holds in the
case $(m, n) = (m_0 + 1, n_0)$ if and only if
\begin{equation*}
R_{m_0 + 1, n_0, \alpha, L} = \frac{\poch{n_0+1}{L}}{\poch{n_0+\alpha+1}{L}} \frac{\poch{m_0+2}{L}}{\poch{m_0+\alpha+2}{L}} \frac{1}{m_0+n_0+2L+\alpha+2} \, v_{m_0+1+L, n_0+L} \xrightarrow{L \to \infty} 0.
\end{equation*}
However, restricting our attention to the subsequence of indices $L$ of the
form $2^j$, $j \in \natural_0$ and using the asymptotic relation \eqref{GRA},
\begin{equation*}
R_{m_0+1, n_0, \alpha, 2^j} \xrightarrow{j \to \infty} \frac{\Gamma(n_0+\alpha+1)}{\Gamma(n_0+1)} \frac{\Gamma(m_0+\alpha+2)}{\Gamma(m_0+2)} \frac{1}{2} \neq 0,
\end{equation*}
so \eqref{coefs-of-diff-1} cannot hold in this case and part
\ref{it:cod1-fails} of this proposition is proved.

Symmetry arguments analogous to those made at the end of
\autoref{lem:coefs-of-diffs} show that $(x_1, x_2) \mapsto u(x_1, -x_2)$ is a
function satisfying part \ref{it:cod2-fails}.
\end{proof}

\begin{remark} The formula analogous to \eqref{coefs-of-diff-1} and
\eqref{coefs-of-diff-1} for symmetric Jacobi expansions, namely
\begin{equation}\label{diff-coefs}
\widehat{(u')}_n\psp{\alpha} = (2k+2\alpha+1) \sum_{\substack{n=k+1\\n-k \text{ is odd}}}^\infty \frac{\poch{k+\alpha+1}{n-k}}{\poch{k+2\alpha+1}{n-k}} \hat u_{n}\psp{\alpha},
\end{equation}
where
\begin{equation*}
u = \sum_{n=0}^\infty \hat u_n\psp{\alpha} \Jac[n]{\alpha}{\alpha}
\quad\text{and}\quad
u' = \sum_{n=0}^\infty \widehat{(u')}_n\psp{\alpha} \Jac[n]{\alpha}{\alpha},
\end{equation*}
is valid for all $u \in \HH^1_{\wJac{\alpha}{\alpha}}(-1,1)$ if $\alpha
\geq -1/2$ (cf.\ \cite[eq.~2.13]{Guo:2000a}, where it is expressed in an
equivalent way in terms of Gegenbauer polynomials). Using essentially the same
arguments put forth in \autoref{lem:coefs-of-diffs} and
\autoref{pro:W12-not-enough} it can be shown that if $\alpha \in (-1, -1/2)$
then the relation \eqref{diff-coefs} is valid under the stronger condition $u
\in \HH^2_{\wJac{\alpha}{\alpha}}(-1,1)$ and that there are functions in
$\HH^1_{\wJac{\alpha}{\alpha}}(-1,1) \setminus
\HH^2_{\wJac{\alpha}{\alpha}}(-1,1)$ for which the relation is false. One
such example is the function defined by $u(x) = \int_0^x  v(t) \dd t$ where in
turn
\begin{equation*}
v = \sum_{n=0}^\infty v_n \, \Jac[n]{\alpha}{\alpha}
\quad\text{and}\quad
v_n = \begin{cases}
n^{\alpha+1} & \text{if } n \in \{2^j \mid j \in \natural_0\},\\
0 & \text{otherwise.}
\end{cases}
\end{equation*}
\end{remark}

\subsection{Main result}\label{ssc:mainResult}

Having obtained the necessary preliminary results we can prove our main result using roughly
the same outer structure of the proof of the univariate case with $\alpha = -1/2$ (Chebyshev) and
$\alpha = 0$ (Legendre) on page 302 of \cite{CHQZ-I}. The core of the argument
lies below in \autoref{lem:DPmPD} and the main result itself in
\autoref{thm:lossyProjection}. In order to express the former in a more compact
form we extend the notation $\proj\psp{\alpha}_N$ (cf.\ \eqref{projector-2d})
so that given any $k \in \natural$ and $F \in [\LL^2_{\rho^\alpha}(B^2)]^k$,
$\proj\psp{\alpha}_N(F)$ signifies the componentwise application of
$\proj\psp{\alpha}_N$ to $F$.

\begin{lemma}\label{lem:DPmPD} Let $\alpha > -1$ and $r, l \in \natural$ with
$l \geq r$. Then there exists $C = C(\alpha, l, r)$ such that for every $N \in
\natural$ and $u \in \HH^l_{\rho^\alpha}(B^2)$,
\begin{equation*}
\norm{\proj\psp{\alpha}_N(\grad_r u) - \grad_r \proj\psp{\alpha}_N(u)}_{[\LL^2_{\rho^\alpha}(B^2)]^{r+1}}
\leq C N^{2r - 1/2 - l} \norm{u}_{\HH^l_{\rho^\alpha}(B^2)}.
\end{equation*}
\end{lemma}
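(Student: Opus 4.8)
The plan is to transplant the one–dimensional scheme of \cite[p.~302]{CHQZ-I}. First, using the pointwise equivalence~\eqref{proto-equivalent-seminorm} with $l=r$ together with the linearity of $\proj\psp{\alpha}_N$ and of $\partial_z,\partial_{z^*}$, the quantity to be bounded is, up to a factor depending only on $r$, equal to $\bigl(\sum_{l_1+l_2=r}\norm{\proj\psp{\alpha}_N(Du)-D\proj\psp{\alpha}_N(u)}_{\LL^2_{\rho^\alpha}(B^2)}^2\bigr)^{1/2}$ with $D:=\partial_z^{l_1}\partial_{z^*}^{l_2}$, so it is enough to treat each such summand. Since $D\proj\psp{\alpha}_N(u)\in\poly^2_{N-r}\subseteq\poly^2_N$ is left fixed by $\proj\psp{\alpha}_N$, and since $Du\in\HH^{l-r}_{\rho^\alpha}(B^2)\subseteq\LL^2_{\rho^\alpha}(B^2)$, linearity gives $\proj\psp{\alpha}_N(Du)-D\proj\psp{\alpha}_N(u)=\proj\psp{\alpha}_N\bigl(D(u-\proj\psp{\alpha}_N u)\bigr)$; as $\proj\psp{\alpha}_N$ has $\LL^2_{\rho^\alpha}(B^2)$-operator norm $1$, this reduces the lemma to proving
\[
\norm{D\bigl(u-\proj\psp{\alpha}_N(u)\bigr)}_{\LL^2_{\rho^\alpha}(B^2)}\le C\,N^{2r-1/2-l}\,\norm{u}_{\HH^l_{\rho^\alpha}(B^2)} .
\]

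Next I would identify the Fourier--Zernike coefficients of $Dg$, where $g:=u-\proj\psp{\alpha}_N(u)=\sum_{m+n>N}\hat u\psp{\alpha}_{m,n}P\psp{\alpha}_{m,n}$. Iterating~\eqref{rotated-diffs} gives $DP\psp{\alpha}_{m,n}=c_{m,n}\,P\psp{\alpha+r}_{m-l_1,n-l_2}$ for an explicit $c_{m,n}=c_{m,n}^{\alpha,l_1,l_2}$, a product of $2r$ factors affine in $m$ or in $n$ (of order $m^r n^r$ as $m,n\to\infty$ and vanishing exactly when $m<l_1$ or $n<l_2$, in accordance with~\eqref{convention}). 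Feeding this into $g$ termwise and then re-expanding each $P\psp{\alpha+r}_{m-l_1,n-l_2}$ over the $\LL^2_{\rho^\alpha}(B^2)$-orthogonal basis by \autoref{pro:Zernike-CC} (used with the roles of $\alpha$ and $\gamma$ there played by $\alpha+r$ and $\alpha$), which reads $P\psp{\alpha+r}_{a,b}=\sum_{k=0}^{\min(a,b)}d^{(r)}_{a,b,k}\,P\psp{\alpha}_{a-k,b-k}$, and re-indexing, one finds $Dg=\sum_{(a,b)}e_{a,b}\,P\psp{\alpha}_{a,b}$ with
\[
e_{a,b}=\sum_{\substack{k\ge 0\\ a+b+r+2k>N}}\hat u\psp{\alpha}_{a+l_1+k,\,b+l_2+k}\;c_{a+l_1+k,\,b+l_2+k}\;d^{(r)}_{a+k,\,b+k,\,k}.
\]
To make the convergence and the interchanges of summation rigorous I would run this computation on the polynomial partial sums $g_M:=\proj\psp{\alpha}_M(u)-\proj\psp{\alpha}_N(u)$ (all sums then finite), establish the estimate below with a constant independent of $M$, and pass to the limit $M\to\infty$, using that $g_M\to g$ in $\LL^2_{\rho^\alpha}(B^2)$ (hence $Dg_M\to Dg$ in $\mathcal D'(B^2)$) together with weak compactness of the norm-bounded sequence $Dg_M$ in $\LL^2_{\rho^\alpha}(B^2)$. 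Orthogonality then yields $\norm{Dg}_{\LL^2_{\rho^\alpha}(B^2)}^2=\sum_{(a,b)}\abs{e_{a,b}}^2 h\psp{\alpha}_{a,b}$.

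The remaining, and by far the hardest, step is to extract from this sum the exponent $2r-1/2-l$. I would insert the asymptotics---obtained from~\eqref{GRA} together with the closed forms~\eqref{squaredNorm-2d}, \eqref{eigenvalues-2d} and \autoref{pro:Zernike-CC}---of $c_{m,n}$, of $h\psp{\alpha}_{m,n}$, of $\lambda\psp{\alpha}_{m,n}$ and, crucially, of the ``deep'' coefficients $d^{(r)}_{a+k,b+k,k}$ (where the truncation depth $k$ is large while the output index $(a,b)$ stays small); apply the Cauchy--Schwarz inequality in $k$ so that the factor carrying $\hat u\psp{\alpha}$ is summable against the weight $\bigl(\lambda\psp{\alpha}_{a+l_1+k,b+l_2+k}\bigr)^l h\psp{\alpha}_{a+l_1+k,b+l_2+k}$, hence controlled by $\norm{u}_{\HH^l_{\rho^\alpha}(B^2)}^2$ via~\eqref{sequences-2d}; and sum the residual kernel against $h\psp{\alpha}_{a,b}$ over $(a,b)$ and over the constraint $a+b+r+2k>N$. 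As in the Legendre/Chebyshev case, the naive pairing only yields the non-sharp rate $N^{2r-l}$: the missing half power is the genuinely delicate point, and I expect to recover it by refining the summation---splitting off the ``diagonal'' contributions and there using, in place of~\eqref{sequences-2d}, the sharper pointwise decay of \autoref{pro:decayRate}, and, in borderline regimes of small $l$, bounding an individual term of a convergent series by the whole sum, exactly as is done in \cite[p.~302]{CHQZ-I}. Arranging this bookkeeping so that it produces precisely the exponent $2r-1/2-l$ with a constant depending only on $\alpha$, $l$ and $r$ is the main obstacle; everything else is routine.
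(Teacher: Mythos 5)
Your setup is fine as far as it goes: the identity $\proj\psp{\alpha}_N(Du)-D\proj\psp{\alpha}_N(u)=\proj\psp{\alpha}_N\bigl(D(u-\proj\psp{\alpha}_N u)\bigr)$ is correct, and expanding $D(u-\proj\psp{\alpha}_N u)$ through iterated use of \eqref{rotated-diffs} and the connection coefficients of \autoref{pro:Zernike-CC} is a legitimate framework. But note that this reduction buys nothing: since $\norm{Du-\proj\psp{\alpha}_N(Du)}_{\LL^2_{\rho^\alpha}(B^2)}\leq C N^{r-l}\norm{u}_{\HH^l_{\rho^\alpha}(B^2)}$ is already available from \autoref{cor:L2-approx}, bounding $\norm{D(u-\proj\psp{\alpha}_N u)}_{\LL^2_{\rho^\alpha}(B^2)}$ is equivalent (up to a harmless term) to bounding the commutator itself, i.e.\ to the seminorm part of the main theorem. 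The genuine gap is the final step, which you yourself flag: you obtain only $N^{2r-l}$ and merely ``expect'' to recover the extra $N^{-1/2}$. That half power is the entire content of the lemma, and the tools you propose cannot deliver it: \autoref{pro:decayRate} is a non-quantitative limit statement along a single diagonal for a fixed $(m,n)$, with no uniformity in $N$ and no rate, so it cannot produce a bound $C(\alpha,l,r)\,N^{-1/2}$; and the CHQZ trick of bounding one term of a convergent series by its sum likewise gives no power of $N$ by itself. As written, the proposal proves the non-sharp estimate at best.

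For comparison, the paper gets the half power from a structural fact your scheme never surfaces: using the coefficient identity \eqref{coefs-of-diff-1} of \autoref{lem:coefs-of-diffs}, each coefficient of the commutator $\proj\psp{\alpha}_N(\partial_{z^*}u)-\partial_{z^*}\proj\psp{\alpha}_N(u)$ is a tail sum that collapses (see \eqref{identified-diff-coef}) into a single ``deep'' coefficient of $\partial_{z^*}u$ with degree $m+n+2\delta\psp{N}_{m,n}\in\{N,N+1\}$; after a rearrangement in which the inner sums telescope, the squared norm of the commutator is a weighted sum of $\abs{v_{m,n}}^2 h\psp{\alpha}_{m,n}$ over the two shells $m+n=N$ and $m+n=N+1$ only, with weights of size $O(N)$. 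Hence it is bounded by $C\,N$ times the $\LL^2_{\rho^\alpha}$ projection errors of $\partial_{z^*}u$ at degrees $N-1$ and $N$, and \autoref{cor:L2-approx} applied to $\partial_{z^*}u\in\HH^{l-1}_{\rho^\alpha}(B^2)$ yields $N\cdot N^{2(1-l)}=N^{3-2l}$, i.e.\ the exponent $3/2-l$ for $r=1$; the case $r\geq 2$ is then handled by induction using the Markov inequality of \autoref{lem:Markov}, not by a one-shot computation. If you want to salvage your route, you would need a quantitative replacement for \autoref{pro:decayRate} or, better, to reorganize your double sum so that the constraint $a+b+r+2k>N$ localizes the contribution of $\hat u\psp{\alpha}$ onto $O(N)$ shells near degree $N$, which is in effect the paper's telescoping argument.
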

\begin{proof} Let $l \in \natural$ and $u \in \HH^l_{\rho^\alpha}(B^2)$. If
we prove the existence of $C>0$ independent of $u$ and $N$ such that
\begin{equation}\label{lossyGoal}
\norm{\proj\psp{\alpha}_N\left(\partial_{z^*} u\right) - \partial_{z^*} \proj\psp{\alpha}_N(u)}_{\LL^2_{\rho^\alpha}(B^2)}^2
\leq C N^{3 - 2 l} \norm{u}_{\HH^l_{\rho^\alpha}(B^2)}^2
\end{equation}
and the corresponding result involving the operator $\partial_z$, the $r = 1$
case of our desired result will follow. For the proof of \eqref{lossyGoal} we
can assume that $u$ is regular enough for the relation \eqref{coefs-of-diff-1}
between the orthogonal expansion of $u$ and that of its image under the
operator $\partial_{z^*}$ to hold (otherwise, we can replace $u$ by the members
of a sequence of $\CC^\infty(\overline{B^2})$ functions which converges to $u$
in $\HH^l_{\rho^\alpha}(B^2)$---which exists by virtue of
\cite[Remark~11.12.(iii)]{Kufner:1985}---and once \eqref{lossyGoal} is proved
it will extend to $u$ by continuity); that is,
\begin{equation*}
u = \sum_{m=0}^\infty \sum_{n=0}^\infty \hat u\psp{\alpha}_{m,n} \, P\psp{\alpha}_{m,n}
\quad\text{and}\quad \partial_{z^*} u = \sum_{m=0}^\infty \sum_{n=0}^\infty v_{m,n} \, P\psp{\alpha}_{m,n},
\end{equation*}
both series converging in the $\LL^2_{\rho^\alpha}(B^2)$ sense, with the
$v_{m,n}$ and the $\hat u\psp{\alpha}_{m,n}$ connected by
\eqref{coefs-of-diff-1}. As $\proj\psp{\alpha}_N(u)$ is a polynomial, it is
also regular enough to have the coefficients of its Fourier--Zernike series and
the corresponding coefficients of its image under the operator $\partial_{z^*}$
connected by the formula \eqref{coefs-of-diff-1}. Further taking into account
the fact that the expansion of $\proj\psp{\alpha}_N(u)$ is but a truncation of
the expansion of $u$ we have
\begin{equation*}
\proj\psp{\alpha}_N(u) = \sum_{m+n \leq N} \hat u\psp{\alpha}_{m,n} \, P\psp{\alpha}_{m,n}
\quad\text{and}\quad
\partial_{z^*} \proj\psp{\alpha}_N(u) = \sum_{m+n \leq N} v\psp{\mathrm{trunc}}_{m,n} \, P\psp{\alpha}_{m,n},
\end{equation*}
where \eqref{coefs-of-diff-1} takes the particular form: for all $(m, n) \in
\natural_0 \times \natural_0$ with $m+n \leq N$,
\begin{equation*}
v\psp{\mathrm{trunc}}_{m,n} = (m + n + \alpha + 1) \sum_{l = 0}^{\floor{\frac{N-m-n-1}{2}}} \frac{\poch{m+1}{l}}{\poch{m+\alpha+1}{l}} \frac{\poch{n+1}{l+1}}{\poch{n+\alpha+1}{l+1}} \, u\psp{\alpha}_{m+l, n+1+l}.
\end{equation*}
In particular, $v\psp{\mathrm{trunc}}_{m,n} = 0$ if $m + n = N$. Therefore,
whenever $0 \leq m+n \leq N$ and adopting the notation $\delta\psp{N}_{m,n} =
\floor{\frac{N-m-n+1}{2}}$,
\begin{multline}\label{identified-diff-coef}
\frac{v_{m,n} - v\psp{\mathrm{trunc}}_{m,n}}{m + n + \alpha + 1}
= \sum_{l = \delta\psp{N}_{m,n}}^\infty \frac{\poch{m+1}{l}}{\poch{m+\alpha+1}{l}} \frac{\poch{n+1}{l+1}}{\poch{n+\alpha+1}{l+1}} \, u\psp{\alpha}_{m+l, n+1+l}\\
= \sum_{l = 0}^\infty \frac{\poch{m+1}{l+\delta\psp{N}_{m,n}}}{\poch{m+\alpha+1}{l+\delta\psp{N}_{m,n}}} \frac{\poch{n+1}{l+\delta\psp{N}_{m,n}+1}}{\poch{n+\alpha+1}{l+\delta\psp{N}_{m,n}+1}} \, u\psp{\alpha}_{m+l+\delta\psp{N}_{m,n}, n+1+l+\delta\psp{N}_{m,n}}\\
= \frac{\poch{m+1}{\delta\psp{N}_{m,n}}}{\poch{m+\alpha+1}{\delta\psp{N}_{m,n}}} \frac{\poch{n+1}{\delta\psp{N}_{m,n}}}{\poch{n+\alpha+1}{\delta\psp{N}_{m,n}}} \frac{v_{m+\delta\psp{N}_{m,n},n+\delta\psp{N}_{m,n}}}{m + n + \alpha + 2 \delta\psp{N}_{m,n} + 1},
\end{multline}
where the last equality is obtained by expanding the Pochhammer symbols of the
form $\poch{X}{\delta\psp{N}_{m,n}+Y}$ according to the rules given in
\autoref{ssc:notation} and noting that then \eqref{coefs-of-diff-1} can be used
to make the coefficient $v_{m+\delta\psp{N}_{m,n}, n+\delta\psp{N}_{m,n}}$
appear. Now, from \eqref{squaredNorm-2d}
\begin{equation}\label{two-ratios}
h\psp{\alpha}_{m,n} =
\frac{m+n+\alpha+2\delta\psp{N}_{m,n}+1}{m+n+\alpha+1} \frac{\poch{m+\alpha+1}{\delta\psp{N}_{m,n}} \poch{n+\alpha+1}{\delta\psp{N}_{m,n}}}{\poch{m+1}{\delta\psp{N}_{m,n}} \poch{n+1}{\delta\psp{N}_{m,n}}} \, h\psp{\alpha}_{m+\delta\psp{N}_{m,n}, n+\delta\psp{N}_{m,n}}.
\end{equation}
Using \eqref{identified-diff-coef} and the fact that if $m+n = N$ then
$\delta\psp{N}_{m,n} = 0$,
\begin{multline}\label{pdmdp-1-even}
\proj\psp{\alpha}_N\left(\partial_{z^*} u\right) - \partial_{z^*} \proj\psp{\alpha}_N(u)
= \sum_{m+n \leq N-1} \left(v_{m,n} - v\psp{\mathrm{trunc}}_{m,n}\right) P\psp{\alpha}_{m,n} + \sum_{m+n = N} v_{m,n} \, P\psp{\alpha}_{m,n}\\
= \sum_{k=0}^N \sum_{m+n=k} (k+\alpha+1) \frac{\poch{m+1}{\delta\psp{N}_{m,n}}}{\poch{m+\alpha+1}{\delta\psp{N}_{m,n}}} \frac{\poch{n+1}{\delta\psp{N}_{m,n}}}{\poch{n+\alpha+1}{\delta\psp{N}_{m,n}}} \frac{v_{m+\delta\psp{N}_{m,n},n+\delta\psp{N}_{m,n}}}{k+\alpha+2\delta\psp{N}_{m,n}+1} P\psp{\alpha}_{m,n}.
\end{multline}
As the terms resulting in \eqref{pdmdp-1-even} are
$\LL^2_{\rho^\alpha}(B^2)$-orthogonal to each other, taking the corresponding
squared norm of both its ends, using \eqref{two-ratios} results in
\begin{multline*}
\norm{\proj\psp{\alpha}_N\left(\partial_{z^*} u\right) - \partial_{z^*} \proj\psp{\alpha}_N(u)}_{\LL^2_{\rho^\alpha}(B^2)}^2\\
= \sum_{k=0}^N \sum_{m+n=k} \frac{k + \alpha + 1}{k + \alpha + 2\delta\psp{N}_{m,n} + 1} \frac{\poch{m + 1}{\delta\psp{N}_{m,n}}}{\poch{m + \alpha + 1}{\delta\psp{N}_{m,n}}} \frac{\poch{n + 1}{\delta\psp{N}_{m,n}}}{\poch{n + \alpha + 1}{\delta\psp{N}_{m,n}}} \abs{v_{m+\delta\psp{N}_{m,n}, n+\delta\psp{N}_{m,n}}}^2 h\psp{\alpha}_{m+\delta\psp{N}_{m,n}, n+\delta\psp{N}_{m,n}}.
\end{multline*}
We want to rearrange the above sum so that those $(m',n') \in \natural_0 \times
\natural_0$ such that $\abs{v_{m',n'}}^2 h\psp{\alpha}_{m',n'}$ appears in the
above sum and their accompanying coefficients become readily apparent. Let
$\mathcal{E}\psp{\alpha}_N$ and $\mathcal{O}\psp{\alpha}_N$ denote the above
sum restricted to the terms with $N-k$ even and odd, respectively. In the inner
sum of both resulting expressions $n$ can be replaced with $k-m$ by letting $m$
range in $\{0, \dotsc, k\}$. Applying the change of variable $(i, j) =
(m+\frac{N-k}{2}, \frac{N-k}{2})$ in the sum defining
$\mathcal{E}\psp{\alpha}_N$ we are left with
\begin{subequations}\label{evenPart}
\begin{equation}\label{evenPartA}
\mathcal{E}\psp{\alpha}_N = \sum_{i=0}^N \left[ \sum_{j=0}^{\min(i,N-i)} \mathcal{E}\psp{\alpha}_{N,i,j} \right] \abs{v_{i, N-i}}^2 h\psp{\alpha}_{i, N-i}.
\end{equation}
where
\begin{equation}\label{evenPartB}
\mathcal{E}\psp{\alpha}_{N,i,j} := \frac{N - 2j + \alpha + 1}{N + \alpha + 1} \frac{\poch{i - j + 1}{j}}{\poch{i - j + \alpha + 1}{j}} \frac{\poch{N - i - j + 1}{j}}{\poch{N - i - j + \alpha + 1}{j}}.
\end{equation}
\end{subequations}
%
%
The sum inside the square brackets in \eqref{evenPartA} is invariant under the
transformation $i \mapsto N-i$. Thus, we can learn the values of all the
instances of this sum by looking at the cases where $i \leq N-i$ only. For such
$i$ it is straightforward to check that as long as $j \in \{0, \dotsc, i\}$,
\begin{equation*}
\mathcal{E}\psp{\alpha}_{N,i,j}
= \begin{cases}
\displaystyle \Delta_j \left[ -\frac{(i-j+\alpha+1) (N-i-j+\alpha+1) \poch{i-j+1}{j} \poch{N-i-j+1}{j}}{(\alpha+1) (N+\alpha+1) \poch{i-j+\alpha+1}{j} \poch{N-i-j+\alpha+1}{j}} \right] & \text{if } \alpha \neq 0,\\
\displaystyle \frac{N - 2j + \alpha + 1}{N + \alpha + 1} & \text{if } \alpha = 0.
\end{cases}
\end{equation*}
Hence, the sum with respect to $j$ telescopes if $\alpha \neq 0$ and is well known if
$\alpha = 0$, giving (using the abovementioned invariance under the
transformation $i \mapsto N-i$)
\begin{equation*}
\mathcal{E}\psp{\alpha}_N = \sum_{i=0}^N \frac{(i+\alpha+1) (N-i+\alpha+1)}{(\alpha+1) (N+\alpha+1)} \abs{v_{i,N-i}}^2 h\psp{\alpha}_{i,N-i}.
\end{equation*}
Applying the change of variable $(i, j) = (m+\frac{N-k+1}{2}, \frac{N-k+1}{2})$
in the sum defining $\mathcal{O}\psp{\alpha}_N$ we obtain
\begin{subequations}\label{oddPart}
\begin{equation}\label{oddPartA}
\mathcal{O}\psp{\alpha}_N = \sum_{i=1}^N \left[\sum_{j=1}^{\min(i,N+1-i)} \mathcal{O}\psp{\alpha}_{N,i,j} \right] \abs{v_{i,N+1-i}}^2 h\psp{\alpha}_{i,N+1-i}
\end{equation}
where
\begin{equation}\label{oddPartB}
\mathcal{O}\psp{\alpha}_{N,i,j} := \frac{N-2j+\alpha+2}{N+\alpha+2} \frac{\poch{i-j+1}{j}}{\poch{i-j+\alpha+1}{j}} \frac{\poch{N-i-j+2}{j}}{\poch{N-i-j+\alpha+2}{j}}.
\end{equation}
\end{subequations}
%
The sum inside the square brackets in \eqref{oddPartA} is invariant under the
transformation $i \mapsto N+1-i$. Also, comparing \eqref{oddPartB} with
\eqref{oddPartA} we find that $\mathcal{O}\psp{\alpha}_{N,i,j} =
\mathcal{E}\psp{\alpha}_{N+1,i,j}$. Hence, we can adapt our previous argument
and state
\begin{equation*}
\mathcal{O}\psp{\alpha}_N = \sum_{i=1}^N \frac{i(N+1-i)}{(\alpha+1)(N+\alpha+2)} \abs{v_{i,N+1-i}}^2 h\psp{\alpha}_{i,N+1-i}.
\end{equation*}
Summing the resulting expressions for $\mathcal{E}\psp{\alpha}_N$ and
$\mathcal{O}\psp{\alpha}_N$ and using the fact that $i \mapsto
(i+\alpha+1)(N-i+\alpha+1)$ and $i \mapsto i(N+1-i)$, seen as functions of a
real variable, attain their maxima at $N/2$ and $(N+1)/2$, respectively, we
obtain
\begin{multline*}
\norm{\proj\psp{\alpha}_N\left(\partial_{z^*} u\right) - \partial_{z^*} \proj\psp{\alpha}_N(u)}_{\LL^2_{\rho^\alpha}(B^2)}^2\\
\leq \frac{(N/2+\alpha+1)^2}{(\alpha+1) (N+\alpha+1)} \sum_{m+n=N} \abs{v_{m,n}}^2 h\psp{\alpha}_{m,n} + \frac{((N+1)/2)^2}{(\alpha+1)(N+\alpha+2)} \sum_{m+n=N+1} \abs{v_{m,n}}^2 h\psp{\alpha}_{m,n}\\
\leq C_{\alpha} (N+1) \left( \norm{\partial_{z^*} u - \proj\psp{\alpha}_{N-1}\left(\partial_{z^*} u\right)}_{\LL^2_{\rho^\alpha}(B^2)}^2 + \norm{\partial_{z^*} u - \proj\psp{\alpha}_N\left(\partial_{z^*} u\right)}_{\LL^2_{\rho^\alpha}(B^2)}^2 \right)\\
\leq C_\alpha C (N+1) N^{2(1-l)} \norm{\partial_{z^*} u}_{\HH^{l-1}_{\rho^\alpha}(B^2)}^2 + C_\alpha C (N+1) (N+1)^{2(1-l)} \norm{\partial_{z^*} u}_{\HH^{l-1}_{\rho^\alpha}(B^2)}^2,
\end{multline*}
where $C_\alpha = \sup_{N \in \natural_0} \max\left(
\frac{(N/2+\alpha+1)^2}{(\alpha+1) (N+\alpha+1) (N+1)},
\frac{((N+1)/2)^2}{(\alpha+1)(N+\alpha+2)(N+1)} \right) > 0$ and the last
inequality comes from \autoref{cor:L2-approx}. Upon using standard inequalities
\eqref{lossyGoal} is attained.

By using the reflection introduced at the end of the proof of
\autoref{lem:coefs-of-diffs} we can turn \eqref{lossyGoal} into its analogue
for the $\partial_z$ differential operator and thus conclude the proof of the
$r = 1$ case of this lemma.

Starting from the bidimensional case of the Markov inequality in
\autoref{lem:Markov} it is readily proved by induction that there exists $C =
C(\alpha,r) > 0$ such that for all $N \in \natural_0$ and $p \in \poly^d_N$,
\begin{equation}\label{iteratedMarkov}
\abs{p}_{\HH^r_{\rho^\alpha}(B^2)} \leq C N^{2r} \norm{p}_{\LL^2_{\rho^\alpha}(B^2)}.
\end{equation}
We are now in a position to obtain the general case of this lemma by induction
on $r$, the initialization $r = 1$ having already being proved. Thus, let us
assume that the desired result holds up to some $r \in \natural$ and let $l
\geq r+1$ and $u \in \HH^l_{\rho^\alpha}(B^2)$. Then,
\begin{multline*}
\norm{\proj\psp{\alpha}_N\left(\grad_r \partial_{z^*} u\right) - \grad_r \partial_{z^*} \proj\psp{\alpha}_N(u)}_{[\LL^2_{\rho^\alpha}(B^2)]^{r+1}}\\
\leq \norm{\proj\psp{\alpha}_N\left(\grad_r \partial_{z^*} u\right) - \grad_r \proj\psp{\alpha}_N\left(\partial_{z^*} u\right)}_{[\LL^2_{\rho^\alpha}(B^2)]^{r+1}} + \abs{\proj\psp{\alpha}_N\left(\partial_{z^*} u\right) - \partial_{z^*} u \proj\psp{\alpha}_N(u)}_{\HH^r_{\rho^\alpha}(B^2)}\\
\leq C N^{-1/2+2r-(l-1)} \norm{\partial_{z^*} u}_{\HH^{l-1}_{\rho^\alpha}(B^2)} + C N^{2r} \norm{\proj\psp{\alpha}_N\left(\partial_{z^*} u\right) - \partial_{z^*} \proj\psp{\alpha}_N(u)}_{\LL^2_{\rho^\alpha}(B^2)},
\end{multline*}
where we have used the induction hypothesis and \eqref{iteratedMarkov}. Using
\eqref{lossyGoal} to bound the last term above we obtain
\begin{equation*}
\norm{\proj\psp{\alpha}_N\left(\grad_r \partial_{z^*} u\right) - \grad_r \partial_{z^*} \proj\psp{\alpha}_N(u)}_{[\LL^2_{\rho^\alpha}(B^2)]^{r+1}} \leq C N^{-1/2 + 2(r+1) - l} \norm{u}_{\HH^l_{\rho^\alpha}(B^2)}.
\end{equation*}
Combining this with its analogue involving the $\partial_z$ operator we obtain
the desired bound for the commutator of the projection and the $\grad_{r+1}$
operators.
\end{proof}

\begin{remark}\label{rem:easierCase}
The proof of \autoref{lem:DPmPD} can be significantly simplified in the $\alpha \geq 0$ case because then $\mathcal{E}\psp{\alpha}_{N,i,j}$ of \eqref{evenPartB} and $\mathcal{O}\psp{\alpha}_{N,i,j}$ of \eqref{oddPartB} can each be bounded by $1$.
\end{remark}

\begin{thm}\label{thm:lossyProjection} Let $\alpha > -1$ and $r, l \in
\natural$ with $l \geq r$. Then there exists $C = C(\alpha,l,r) > 0$ such that
for every $N \in \natural$ and $u \in \HH^l_{\rho^\alpha}(B^2)$,
\begin{equation*}
\norm[n]{u - \proj\psp{\alpha}_N(u)}_{\HH^r_{\rho^\alpha}(B^2)} \leq C \, N^{-1/2+2r-l} \norm{u}_{\HH^l_{\rho^\alpha}(B^2)}.
\end{equation*}
\end{thm}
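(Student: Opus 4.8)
The plan is to deduce \autoref{thm:lossyProjection} from the commutator estimate \autoref{lem:DPmPD} and the $\LL^2$-approximation rate \autoref{cor:L2-approx} by a triangle-inequality argument carried out one derivative order at a time. Since, by definition, $\norm{v}_{\HH^r_{\rho^\alpha}(B^2)}^2 = \sum_{k=0}^r \abs{v}_{\HH^k_{\rho^\alpha}(B^2)}^2$ and, in two dimensions, $\abs{v}_{\HH^k_{\rho^\alpha}(B^2)} = \norm{\grad_k v}_{[\LL^2_{\rho^\alpha}(B^2)]^{k+1}}$ (the $k+1$ components being $\partial_1^{k-i}\partial_2^i v$, $i \in \{0,\dots,k\}$), it is enough to bound $\norm{\grad_k(u - \proj\psp{\alpha}_N(u))}_{[\LL^2_{\rho^\alpha}(B^2)]^{k+1}}$ by $C\,N^{-1/2+2r-l}\norm{u}_{\HH^l_{\rho^\alpha}(B^2)}$ for each $k \in \{0,\dots,r\}$ separately and then add the $r+1$ resulting inequalities.

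For $k = 0$ this is nothing but \autoref{cor:L2-approx}, which in fact gives the stronger bound $C\,(N+1)^{-l}\norm{u}_{\HH^l_{\rho^\alpha}(B^2)}$; here $-l \le -1/2+2r-l$ because $r \ge 1$. For $1 \le k \le r$ I would insert the polynomial $\proj\psp{\alpha}_N(\grad_k u)$ (with $\proj\psp{\alpha}_N$ acting componentwise, as extended just before \autoref{lem:DPmPD}) and split
\begin{equation*}
\grad_k\bigl(u - \proj\psp{\alpha}_N(u)\bigr) = \bigl(\grad_k u - \proj\psp{\alpha}_N(\grad_k u)\bigr) + \bigl(\proj\psp{\alpha}_N(\grad_k u) - \grad_k \proj\psp{\alpha}_N(u)\bigr).
\end{equation*}

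Because $u \in \HH^l_{\rho^\alpha}(B^2)$, every component of $\grad_k u$ lies in $\HH^{l-k}_{\rho^\alpha}(B^2)$ and the sum of the squared $\HH^{l-k}_{\rho^\alpha}(B^2)$-norms of these components is controlled, up to a constant depending only on $k$, by $\norm{u}_{\HH^l_{\rho^\alpha}(B^2)}^2$; hence \autoref{cor:L2-approx}, applied to each component, bounds the first summand by $C\,(N+1)^{k-l}\norm{u}_{\HH^l_{\rho^\alpha}(B^2)}$, and $k-l \le r-l \le -1/2+2r-l$ since $r \ge 1$. The second summand is exactly the quantity controlled by \autoref{lem:DPmPD} with its ``$r$'' replaced by the present $k$ (legitimate because $k \le r \le l$), so it is at most $C\,N^{2k-1/2-l}\norm{u}_{\HH^l_{\rho^\alpha}(B^2)} \le C\,N^{2r-1/2-l}\norm{u}_{\HH^l_{\rho^\alpha}(B^2)}$.

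Collecting the $r+1$ bounds and absorbing the finitely many $\alpha,l,r$-dependent constants finishes the argument; the only routine check along the way is that every exponent produced above is $\le -1/2+2r-l$, which is the elementary comparison just used. I do not expect any genuine obstacle at this stage: all of the substantive work has already been done in \autoref{lem:DPmPD} and, behind it, in the Zernike connection-coefficient and derivative-coefficient identities of \autoref{pro:Zernike-CC} and \autoref{lem:coefs-of-diffs} together with the Markov-type inequality \autoref{lem:Markov}, so \autoref{thm:lossyProjection} is a direct distillation of those ingredients.
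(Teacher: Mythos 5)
Your proposal is correct and follows essentially the same route as the paper's own proof: decompose $\grad_k(u-\proj\psp{\alpha}_N(u))$ for each $k\in\{0,\dots,r\}$ into $(\grad_k u-\proj\psp{\alpha}_N(\grad_k u))+(\proj\psp{\alpha}_N(\grad_k u)-\grad_k\proj\psp{\alpha}_N(u))$, bound the first piece with \autoref{cor:L2-approx} applied componentwise and the second with \autoref{lem:DPmPD} (with its parameter set to $k$), then sum and observe that the dominant exponent is $-1/2+2r-l$. The only cosmetic difference is that the paper works with squared norms and a factor $2$ from the elementary inequality, while you argue at the level of norms; the content is identical.
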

\begin{proof} For every $k \in \{1, \dotsc, r\}$,
\begin{multline*}
\norm[n]{\grad_k \big(u - \proj\psp{\alpha}_N(u)\big)}_{[\LL^2_{\rho^\alpha}(B^2)]^{k+1}}^2\\
\leq 2\norm[n]{\grad_k u - \proj\psp{\alpha}_N(\grad_k u)}_{[\LL^2_{\rho^\alpha}(B^2)]^{k+1}}^2 + 2\norm[n]{\proj\psp{\alpha}_N(\grad_k u) - \grad_k \proj\psp{\alpha}_N(u)}_{[\LL^2_{\rho^\alpha}(B^2)]^{k+1}}^2.
\end{multline*}
We can bound the first term using \autoref{cor:L2-approx} and the second term
using \autoref{lem:DPmPD}. As the squared $\HH^l_{\rho^\alpha}(B^2)$ norm
of $u - \proj\psp{\alpha}_N(u)$ is the sum of the squared
$\LL^2_{\rho^\alpha}(B^2)$ norm of $u - \proj\psp{\alpha}_N(u)$ (which again,
can be bounded using \autoref{cor:L2-approx}) and the left-hand side above for
$k \in \{1, \dotsc, r\}$, we obtain the desired bound upon realizing that the
highest power on $N$ which will appear is $-1 - 2l + 4 r$ and taking square
roots.
\end{proof}

Given $l \in \natural_0$ and $\theta \in (0,1)$ we use complex interpolation
(see \cite[\P7.51--52]{AF:2003} for a succinct discussion which suffices for our
purposes save for a strong enough statement of the reiteration theorem, which
can be found in \cite[\P12.3]{Calderon:1964}) to define
\begin{equation}\label{interpolatedSpace}
\HH^{l+\theta}_{\rho^\alpha}(B^2) := \left[\HH^l_{\rho^\alpha}(B^2), \HH^{l+1}_{\rho^\alpha}(B^2)\right]_\theta.
\end{equation}
Then, by using the exact interpolation theorem, \autoref{cor:L2-approx} and
\autoref{thm:lossyProjection} are readily generalized to the above intermediate
spaces:

\begin{corollary}\label{cor:interp} Let $\alpha > -1$ and $r, l \geq 0$ with $l
\geq r$. Then there exists $C = C(\alpha,l,r) > 0$ such that for every $N \in
\natural$ and $u \in \HH^l_{\rho^\alpha}(B^2)$,
\begin{subequations}\label{interp}
\begin{equation}
\norm[n]{u - \proj\psp{\alpha}_N(u)}_{\HH^r_{\rho^\alpha}(B^2)} \leq C N^{e(l,r)} \norm{u}_{\HH^l_{\rho^\alpha}(B^2)}
\end{equation}
where
\begin{equation}\label{growthRate}
e(l,r) := \begin{cases}
3/2 \, r - l & \text{if } 0 \leq r \leq 1,\\
-1/2 + 2\,r - l & \text{if } r \geq 1. \end{cases}
\end{equation}
\end{subequations}
\end{corollary}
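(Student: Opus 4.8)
The plan is to apply the exact interpolation theorem to the linear operator $E_N := I - \proj\psp{\alpha}_N$, feeding it the estimates already established at integer smoothness indices. Recall the theorem (in the formulation of \cite[\P\,7.51--52]{AF:2003}): if $E_N$ maps $X_i$ into $Y_i$ with norm $M_i$ for $i = 0, 1$, then it maps $[X_0, X_1]_\theta$ into $[Y_0, Y_1]_\theta$ with norm at most $M_0^{1-\theta} M_1^{\theta}$. As endpoint data I would use \autoref{cor:L2-approx} --- which is already the $r = 0$ instance of the claim, since $e(l, 0) = -l$ --- and \autoref{thm:lossyProjection}, the case of integers $1 \leq r \leq l$, together with the trivial bound $\norm{E_N(u)}_{\LL^2_{\rho^\alpha}(B^2)} \leq \norm{u}_{\LL^2_{\rho^\alpha}(B^2)}$ stemming from $\proj\psp{\alpha}_N$ being an orthogonal projection. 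The observation that will make the interpolated exponents come out sharp (not merely an over-estimate) is that $e(l, r)$ of \eqref{growthRate}, as a function of $(l, r)$, is affine in $l$ for fixed $r$ and piecewise affine in $r$ with its only break at $r = 1$; hence $e$ is affine along every line segment inside $\{(l, r) : l \geq r \geq 0\}$ that avoids the line $r = 1$, and along such a segment the convex combination of endpoint exponents delivered by interpolation equals $e$ at the interpolated point. (Since $M_0^{1-\theta} M_1^{\theta} \leq \max(M_0, M_1)$ once the endpoints are fixed, the final constant will depend only on $\alpha$, $l$ and $r$.)

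First I would fix a non-negative integer $r$ and reach all real $l \geq r$ by interpolating in the first variable between consecutive integers: for an integer $n \geq r$, interpolate the bounds $\norm{E_N}_{\HH^n_{\rho^\alpha}(B^2) \to \HH^r_{\rho^\alpha}(B^2)} \leq C N^{e(n, r)}$ and $\norm{E_N}_{\HH^{n+1}_{\rho^\alpha}(B^2) \to \HH^r_{\rho^\alpha}(B^2)} \leq C N^{e(n+1, r)}$, using $[\HH^n_{\rho^\alpha}(B^2), \HH^{n+1}_{\rho^\alpha}(B^2)]_\theta = \HH^{n+\theta}_{\rho^\alpha}(B^2)$ (definition \eqref{interpolatedSpace}) on the source side, $[\HH^r_{\rho^\alpha}(B^2), \HH^r_{\rho^\alpha}(B^2)]_\theta = \HH^r_{\rho^\alpha}(B^2)$ on the target side, and affinity of $e$ in $l$. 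Next I would fix a real $l \geq 0$ and reach real $r$: when $\lceil r \rceil \leq l$, interpolate the target between $\HH^{\floor{r}}_{\rho^\alpha}(B^2)$ and $\HH^{\lceil r \rceil}_{\rho^\alpha}(B^2)$ with parameter $r - \floor{r}$ --- the two endpoint bounds come from the previous step, $[\HH^{\floor{r}}_{\rho^\alpha}(B^2), \HH^{\lceil r \rceil}_{\rho^\alpha}(B^2)]_{r - \floor{r}} = \HH^r_{\rho^\alpha}(B^2)$ by \eqref{interpolatedSpace}, and the break $r = 1$ is never interior to a segment joining consecutive integers, so the exponent is exactly $e(l, r)$.

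The remaining case is the ``corner'' $n \leq r \leq l < n + 1$ with $n := \floor{l} = \floor{r}$, where $\lceil r \rceil > l$ and the target cannot be pushed up to $\HH^{\lceil r \rceil}_{\rho^\alpha}(B^2)$ over the source $\HH^l_{\rho^\alpha}(B^2)$. Here I would interpolate in source and target at once: if $l = n$ then $r = n$ and nothing is to prove; otherwise set $\theta := l - n \in (0, 1)$ and $s^{*} := n + \tfrac{r - n}{l - n} \in [n, n + 1]$, and interpolate with parameter $\theta$ between $\norm{E_N}_{\HH^n_{\rho^\alpha}(B^2) \to \HH^n_{\rho^\alpha}(B^2)} \leq C N^{e(n, n)}$ (from \autoref{thm:lossyProjection}; when $n = 0$ this is the orthogonal-projection bound, $e(0, 0) = 0$) and $\norm{E_N}_{\HH^{n+1}_{\rho^\alpha}(B^2) \to \HH^{s^{*}}_{\rho^\alpha}(B^2)} \leq C N^{e(n+1, s^{*})}$, the latter being an instance of the case just treated because $\lceil s^{*} \rceil \leq n + 1$. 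Definition \eqref{interpolatedSpace} identifies the source as $\HH^l_{\rho^\alpha}(B^2)$, the reiteration theorem \cite[\P\,12.3]{Calderon:1964} identifies the target as $\HH^{n + \theta(s^{*} - n)}_{\rho^\alpha}(B^2) = \HH^r_{\rho^\alpha}(B^2)$, and since the unit square $[n, n+1]^2$ lies on one side of $r = 1$, $e$ is affine on it and the interpolated exponent is $e(l, r)$.

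I do not foresee a deep obstacle; the proof is an organized succession of interpolations. The one point that genuinely needs care --- and the reason a naive ``interpolate once in $l$, then once in $r$'' is insufficient --- is that the exact interpolation theorem returns only the convex combination of the endpoint exponents, so recovering the sharp $e(l, r)$ rather than an over-estimate forces interpolation along segments on which $e$ is affine, i.e.\ segments that do not cross the kink at $r = 1$. This is automatic between consecutive integer $r$-indices, but in the corner region $\floor{l} = \floor{r}$ it is precisely what dictates the auxiliary index $s^{*}$ and the appeal to reiteration.
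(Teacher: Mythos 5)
Your proposal is correct and follows essentially the same route as the paper: both arguments bootstrap the integer-index estimates (\autoref{cor:L2-approx}, \autoref{thm:lossyProjection} and the trivial $\LL^2_{\rho^\alpha}(B^2)$ bound) through the exact complex interpolation theorem, invoke the reiteration theorem precisely in the corner case $\floor{l}=\floor{r}$, and keep every interpolation segment on one side of the kink at $r=1$ so the interpolated exponent is exactly $e(l,r)$. The only difference is organizational: in the corner case the paper interpolates the target between $\HH^{\floor{r}}_{\rho^\alpha}(B^2)$ and $\HH^{l}_{\rho^\alpha}(B^2)$ using the diagonal bound on $T\psp{\alpha}_{N,l,l}$ with parameter $(r-\floor{r})/(l-\floor{l})$, whereas you interpolate source and target simultaneously via the auxiliary index $s^{*}$ --- an equivalent bookkeeping of the same idea.
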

\begin{proof} For $N \in \natural$ let $T\psp{\alpha}_{N,l,r}$ denote the operator $I-\proj\psp{\alpha}_N
\colon \HH^l_{\rho^\alpha}(B^2) \to \HH^r_{\rho^\alpha}(B^2)$. Let us suppose first that neither $l$ nor $r$ is an
integer. Then, if $\floor{l} \geq \floor{r}+1$, for $j \in \{0,1\}$, using the known
bounds on the operator norms
$\norm[n]{T\psp{\alpha}_{N,\floor{l},\floor{r}+j}}$ and
$\norm[n]{T\psp{\alpha}_{N,\floor{l}+1,\floor{r}+j}}$ and the exact
interpolation theorem with interpolation parameter $l - \floor{l}$ results in
the desired bound on $\norm[n]{T\psp{\alpha}_{N,l,\floor{r}}}$ and
$\norm[n]{T\psp{\alpha}_{N,l,\floor{r}+1}}$; combining these estimates with the
exact interpolation theorem with interpolation parameter $r - \floor{r}$ gives
the desired bound on $\norm[n]{T\psp{\alpha}_{N,l,r}}$. If $\floor{l} = \floor{r}$,
the bound on $\norm[n]{T\psp{\alpha}_{N,l,\floor{r}}}$ is obtained exactly as
in the previous case but now it is combined via the exact interpolation theorem
with parameter $\theta = \frac{r-\floor{r}}{l-\floor{l}}$ with the desired bound on
$\norm[n]{T\psp{\alpha}_{N,l,l}}$ which, in turn, comes about by combining the
known bounds on $\norm[n]{T\psp{\alpha}_{N,\floor{l},\floor{l}}}$ and
$\norm[n]{T\psp{\alpha}_{N,\floor{l}+1,\floor{l}+1}}$ with the exact
interpolation theorem with parameter $l-\floor{l}$; the reiteration theorem
is used to ensure that
\begin{multline*}
\left[\HH^{\floor{r}}_{\rho^\alpha}(B^2),\HH^l_{\rho^\alpha}(B^2)\right]_\theta
= \left[\left[\HH^{\floor{l}}_{\rho^\alpha}(B^2),\HH^{\floor{l}+1}_{\rho^\alpha}(B^2)\right]_0,\left[\HH^{\floor{l}}_{\rho^\alpha}(B^2),\HH^{\floor{l}+1}_{\rho^\alpha}(B^2)\right]_{l-\floor{l}}\right]_\theta\\
= \left[\HH^{\floor{l}}_{\rho^\alpha}(B^2),\HH^{\floor{l}+1}_{\rho^\alpha}(B^2)\right]_{(1-\theta) 0 + \theta (l-\floor{l})}
= \HH^r_{\rho^\alpha}(B^2).
\end{multline*}
The cases where either $l$ or $r$ is an integer are similar but simpler so we omit further details.
\end{proof}

\begin{remark}\label{rem:interp}\hfill
\begin{enumerate}
\item Essentially the same argument put forward in \autoref{cor:interp} allows
for generalizing \autoref{cor:L2-approx} and \autoref{thm:lossyProjection}
using real instead of complex interpolation.
\item The proof of \autoref{cor:interp} works with the interpolated space
defined as in \eqref{interpolatedSpace} and does not depend on any further
characterization of those spaces (cf.\ \cite[Lemma~2.1]{CQ:1982}, where a weighted
identity of the form $\HH^{\theta m} = [\LL^2,\HH^m]_\theta$, for $(m,\theta)
\in \natural \times (0,1)$, is tacitly used).
\end{enumerate}
\end{remark}

\subsection{On the sharpness of the main result}\label{ssc:sharpness}

\subsubsection*{Proved sharpness results}

We can show the optimality with respect to the power on $N$ of
\autoref{thm:lossyProjection} in the $r = 1$ case and that of its $r = 0$
analogue, namely \autoref{cor:L2-approx}, in the two-dimensional case. Note that
in both cases all the weighted Sobolev spaces involved have integer regularity
parameters. We will need the following auxiliary result, which is of
independent interest (see Proposition~4.26 and Theorem~4.29 of \cite{Figueroa}
for its one-dimensional analogue and one application, respectively).
\begin{proposition}\label{pro:wrong-seminorm} For all $\alpha > -1$ and $(m, n)
\in \natural_0 \times \natural_0$,
\begin{equation*}
\abs{P\psp{\alpha}_{m,n}}_{\HH^1_{\rho^\alpha}(B^2)}^2 = \frac{2 \, \pi \, \Gamma(\alpha+1)^2 \Gamma(m+1) \, \Gamma(n+1)}{(\alpha+1) \Gamma(m+\alpha+1) \, \Gamma(n+\alpha+1)} (2 \, m \, n + (m+n)(\alpha+1)).
\end{equation*}
\end{proposition}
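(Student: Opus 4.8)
The plan is to split the $\HH^1_{\rho^\alpha}(B^2)$ seminorm into its two Wirtinger components and then exploit the clean differentiation rules \eqref{rotated-diffs}. By the $l=1$ instance of \eqref{proto-equivalent-seminorm} one has
\[
\abs{P\psp{\alpha}_{m,n}}_{\HH^1_{\rho^\alpha}(B^2)}^2 = 2\norm{\partial_z P\psp{\alpha}_{m,n}}_{\LL^2_{\rho^\alpha}(B^2)}^2 + 2\norm{\partial_{z^*} P\psp{\alpha}_{m,n}}_{\LL^2_{\rho^\alpha}(B^2)}^2,
\]
and \eqref{rotated-diffs} turns the two derivatives into scalar multiples of $P\psp{\alpha+1}_{m-1,n}$ and $P\psp{\alpha+1}_{m,n-1}$ (the convention \eqref{convention} covering $m=0$ and $n=0$, cases in which the scalar factor already vanishes). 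Hence everything reduces to computing $\norm{P\psp{\alpha+1}_{a,b}}_{\LL^2_{\rho^\alpha}(B^2)}^2$ for $(a,b)\in\natural_0\times\natural_0$; note that this is the squared norm of a parameter-$(\alpha+1)$ Zernike polynomial measured against the ``wrong'' weight $\rho^\alpha$ rather than against its natural weight $\rho^{\alpha+1}$, whence the name of the proposition.

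To obtain that quantity I would expand $P\psp{\alpha+1}_{a,b}$ in the $\LL^2_{\rho^\alpha}(B^2)$-orthogonal Zernike system $\{P\psp{\alpha}_{m,n}\}$ by means of \autoref{pro:Zernike-CC} with its parameters $(\alpha,\gamma)$ specialized to $(\alpha+1,\alpha)$. With that choice $\poch{\alpha-\gamma}{k}=\poch{1}{k}=\Gamma(k+1)$ cancels one factor while $\Gamma((\alpha+1)+a+b-k+1)$ cancels another, so the connection coefficients $d_k$ collapse to a single ratio of gamma functions. Using the pairwise $\LL^2_{\rho^\alpha}(B^2)$-orthogonality of the $P\psp{\alpha}_{m,n}$ (cf.\ \eqref{Parseval-2d}) together with the explicit norms \eqref{squaredNorm-2d}, one gets $\norm{P\psp{\alpha+1}_{a,b}}_{\LL^2_{\rho^\alpha}(B^2)}^2 = \sum_{k=0}^{\min(a,b)} d_k^2\,h\psp{\alpha}_{a-k,b-k}$, and after cancelling the gamma functions coming from $d_k^2$ and $h\psp{\alpha}_{a-k,b-k}$ each summand turns out to be a fixed constant times
\[
s_k := \frac{\Gamma(\alpha+a-k+1)\,\Gamma(\alpha+b-k+1)\,(\alpha+a+b-2k+1)}{\Gamma(a-k+1)\,\Gamma(b-k+1)}.
\]

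The only step that is not pure bookkeeping is recognizing that $\sum_k s_k$ telescopes: setting $G_j := \Gamma(\alpha+a-j+2)\Gamma(\alpha+b-j+2)\big/\big((\alpha+1)\Gamma(a-j+1)\Gamma(b-j+1)\big)$, the identity $s_k = G_k - G_{k+1}$ reduces, through $z\,\Gamma(z)=\Gamma(z+1)$, to the elementary factorization $(\alpha+A+1)(\alpha+B+1)-AB = (\alpha+1)(\alpha+A+B+1)$ with $A=a-k$ and $B=b-k$. Therefore $\sum_{k=0}^{\min(a,b)} s_k = G_0 - G_{\min(a,b)+1} = G_0$, the last term vanishing because the denominator of $G_{\min(a,b)+1}$ contains $\Gamma$ of a non-positive integer and $1/\Gamma$ vanishes there (an induction on $\min(a,b)$ works equally well). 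This produces $\norm{P\psp{\alpha+1}_{a,b}}_{\LL^2_{\rho^\alpha}(B^2)}^2 = \pi(\alpha+1)\Gamma(\alpha+1)^2\Gamma(a+1)\Gamma(b+1)\big/\big(\Gamma(\alpha+a+2)\Gamma(\alpha+b+2)\big)$.

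Finally I would substitute back: the squared $\LL^2_{\rho^\alpha}(B^2)$ norms of $\partial_{z^*}P\psp{\alpha}_{m,n}$ and $\partial_z P\psp{\alpha}_{m,n}$ simplify (once more via $z\,\Gamma(z)=\Gamma(z+1)$) to $\frac{\pi\Gamma(\alpha+1)^2}{\alpha+1}\cdot\frac{(m+\alpha+1)\,n\,\Gamma(m+1)\Gamma(n+1)}{\Gamma(\alpha+m+1)\Gamma(\alpha+n+1)}$ and to the same expression with $(m+\alpha+1)\,n$ replaced by $m\,(n+\alpha+1)$, so their sum carries the numerator factor $(m+\alpha+1)n + m(n+\alpha+1) = 2mn + (m+n)(\alpha+1)$; multiplying by $2$ delivers the asserted identity. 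I expect the connection-coefficient specialization together with the telescoping of $\sum_k s_k$ to be the core of the proof, everything else being routine gamma-function algebra --- the only real hazard being the bookkeeping of degenerate indices and the $1/\Gamma(0)=0$ convention.
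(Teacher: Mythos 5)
Your proposal is correct and follows essentially the same route as the paper: reduce via the $l=1$ case of \eqref{proto-equivalent-seminorm} and \eqref{rotated-diffs} to the ``wrong-weight'' norm $\norm[n]{P\psp{\alpha+1}_{a,b}}_{\LL^2_{\rho^\alpha}(B^2)}^2$, expand with \autoref{pro:Zernike-CC} specialized to $(\alpha+1,\alpha)$, and evaluate the resulting sum by telescoping (your identity $s_k = G_k - G_{k+1}$ is exactly the paper's $\theta\psp{\alpha}_{m,n,k} = \Delta_k(\zeta\psp{\alpha}_{m,n,k})$ with $\zeta\psp{\alpha}_{m,n,k} = -G_k$), yielding the paper's intermediate formula \eqref{wrongNorm} and then the stated seminorm.
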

\begin{proof} We first observe that for all $\alpha > -1$ and $(m, n) \in
\natural_0 \times \natural_0$,
\begin{equation}\label{wrongNorm}
\norm{P\psp{\alpha+1}_{m,n}}_{\LL^2_{\rho^\alpha}(B^2)}^2
= \frac{\pi \, \Gamma(m+1) \, \Gamma(n+1) \, \Gamma(\alpha+1) \, \Gamma(\alpha+2)}{\Gamma(m+\alpha+2) \, \Gamma(n+\alpha+2)}.
\end{equation}
Indeed, using \autoref{pro:Zernike-CC} to expand $P\psp{\alpha+1}_{m,n}$
in terms of the $P\psp{\alpha}_{m-k, n-k}$ with $k \in \{0, \dotsc,
\min(m,n)\}$, noting that the latter are $\LL^2_{\rho^\alpha}(B^2)$-orthogonal
and using \eqref{squaredNorm-2d} we obtain
\begin{equation*}
\norm{P\psp{\alpha+1}_{m,n}}_{\LL^2_{\rho^\alpha}(B^2)}^2
= \frac{\pi \, \Gamma(m+1)^2 \, \Gamma(n+1)^2 \, \Gamma(\alpha+2)^2}{\Gamma(\alpha+m+2)^2 \, \Gamma(\alpha+n+2)^2} \sum_{k=0}^{\min(m,n)} \theta\psp{\alpha}_{m,n,k}
\end{equation*}
where $\theta\psp{\alpha}_{m,n,k} = (\alpha+m+n-2k+1)
\frac{\Gamma(\alpha+m-k+1) \Gamma(\alpha+n-k+1)}{\Gamma(m-k+1) \Gamma(n-k+1)}$.
Using that $\theta\psp{\alpha}_{m,n,k} = \Delta_k(\zeta\psp{\alpha}_{m,n,k})$,
where $\zeta\psp{\alpha}_{m,n,k} = -\frac{\Gamma(m-k+\alpha+2)
\Gamma(n-k+\alpha+2)}{(\alpha+1) \Gamma(m-k+1) \Gamma(n-k+1)}$ the above sum
telescopes and the \eqref{wrongNorm} follows. Then the desired result is a
direct consequence of the relations in \eqref{rotated-diffs} and
\eqref{wrongNorm}.
\end{proof}

\begin{thm}\label{thm:optimality} The power on $N$ for the $d = 2$ case of
\autoref{cor:L2-approx} and that of the $r = 1$ case of
\autoref{thm:lossyProjection} when $r = 1$ are sharp.
\end{thm}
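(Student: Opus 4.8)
The plan is to exhibit, for each of the two estimates, an explicit one‑parameter family $(u_N)$ of test functions for which the bound is attained up to a constant independent of $N$; since all the Sobolev spaces that occur have integer order, no interpolation is needed.

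\emph{Sharpness of \autoref{cor:L2-approx} in the case $d=2$.} Here I would take $u_N := P\psp{\alpha}_{N+1,0}$, which by \eqref{ev-polar-separation} is just the holomorphic monomial $x\mapsto(x_1+\imath x_2)^{N+1}$ (the degree‑zero Jacobi factor being constant). As $u_N$ is a single Fourier--Zernike mode of total degree $N+1>N$, \eqref{projector-2d} gives $\proj\psp{\alpha}_N(u_N)=0$, so the left‑hand side of the estimate equals $\norm{u_N}_{\LL^2_{\rho^\alpha}(B^2)}=(h\psp{\alpha}_{N+1,0})^{1/2}$. For the $\HH^k$‑norm one uses that $u_N$ is annihilated by $\partial_{z^*}$ and that $\partial_z^j u_N=(\Gamma(N+2)/\Gamma(N+2-j))\,P\psp{\alpha}_{N+1-j,0}$ is again a holomorphic monomial, so that by \eqref{proto-equivalent-seminorm}, $\abs{u_N}_{\HH^j_{\rho^\alpha}(B^2)}^2\cong_j(\Gamma(N+2)/\Gamma(N+2-j))^2\,h\psp{\alpha}_{N+1-j,0}$. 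Feeding the asymptotics \eqref{GRA} into \eqref{squaredNorm-2d} — note $h\psp{\alpha}_{N+1-j,0}\sim\pi\Gamma(\alpha+1)\,N^{-1-\alpha}$ uniformly in bounded $j$ — the
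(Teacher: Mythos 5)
Your proposal is incomplete: it breaks off in the middle of the $r=0$ computation and, more importantly, never touches the $r=1$ case of \autoref{thm:lossyProjection}, which is the harder half of the statement. The part you do sketch is sound and genuinely simpler than the paper's argument for the $r=0$ claim: with $u_N = P\psp{\alpha}_{N+1,0} = (x_1+\imath x_2)^{N+1}$ one indeed has $\proj\psp{\alpha}_N(u_N)=0$, $\partial_{z^*}u_N=0$ and $\partial_z^j u_N = \frac{\Gamma(N+2)}{\Gamma(N+2-j)}\,P\psp{\alpha}_{N+1-j,0}$, so \eqref{proto-equivalent-seminorm} and \eqref{squaredNorm-2d} give $\norm[n]{u_N-\proj\psp{\alpha}_N(u_N)}_{\LL^2_{\rho^\alpha}(B^2)}\big/\norm{u_N}_{\HH^l_{\rho^\alpha}(B^2)} \sim c_{\alpha,l}\,N^{-l}$, which is the sharpness of \autoref{cor:L2-approx} for $d=2$; the paper reaches the same conclusion with the family $t\psp{\alpha,l}_j$ of \eqref{t}, which it then reuses for $r=1$.

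The genuine gap is that your family cannot deliver the $r=1$ sharpness, so a new construction is unavoidable there. Because $u_N$ is holomorphic, each derivative costs only a factor $N$, whence $\abs[n]{u_N}_{\HH^1_{\rho^\alpha}(B^2)}\big/\norm{u_N}_{\HH^l_{\rho^\alpha}(B^2)}\sim c\,N^{1-l}$, a factor $N^{1/2}$ short of the target rate $N^{3/2-l}$. To saturate the $\HH^1$ estimate the truncation residual must be radially oscillatory: by \autoref{pro:wrong-seminorm} a diagonal mode $P\psp{\alpha}_{m,m}$ satisfies $\abs[n]{P\psp{\alpha}_{m,m}}_{\HH^1_{\rho^\alpha}(B^2)}\sim c\,m^{3/2}\norm[n]{P\psp{\alpha}_{m,m}}_{\LL^2_{\rho^\alpha}(B^2)}$; but at the same time the test function must have $\HH^l$-norm only of order $N^{l}$ times the $\LL^2$-norm of that residual. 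A single diagonal mode fails this second requirement (already for $l=r=1$ it yields a ratio of order $N^{0}$ instead of $N^{1/2}$), while your holomorphic mode fails the first. The paper resolves this tension with the combination $t\psp{\alpha,l}_j$ of $l+1$ diagonal Zernike polynomials in \eqref{t}, tuned via \autoref{pro:Zernike-CC} so that all its mixed derivatives of order $l$ collapse to a single Zernike polynomial (keeping $\abs[n]{t\psp{\alpha,l}_j}_{\HH^l_{\rho^\alpha}(B^2)}$ small), while its residual under $\proj\psp{\alpha}_{2j+2l-1}$ is the single diagonal term $P\psp{\alpha}_{j+l,j+l}$, whose $\HH^1$-seminorm is then evaluated with \autoref{pro:wrong-seminorm}. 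Without an analogous two-sided construction, the $r=1$ half of the theorem remains unproved.
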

\begin{proof}
Let $\alpha > -1$. By iterating the relations in \eqref{rotated-diffs} it is
readily proved by induction that for every $m, n, l_1, l_2 \in \natural_0$,
\begin{equation}\label{higher-rotated-diffs}
\partial_{z^*}^{l_2} \partial_z^{l_1} P\psp{\alpha}_{m,n} = \frac{\poch{m-l_1+1}{l_1} \poch{n-l_2+1}{l_2} \poch{n+\alpha+1}{l_1} \poch{m+\alpha+1}{l_2}}{\poch{\alpha+1}{l_1+l_2}} P\psp{\alpha+l_1+l_2}_{m-l_1,n-l_2}.
\end{equation}
Given $l, j \in \natural$ with $j \geq l$ we define the polynomial
\begin{equation}\label{t}
t\psp{\alpha,l}_j := \sum\limits_{k=0}^l \frac{\poch{-l}{k} \Gamma(\alpha+j+l-k+1)^2 (\alpha+2j+2l-2k+1)}{\Gamma(k+1) \Gamma(j+l-k+1)^2 \poch{\alpha+2j+l-k+1}{l+1}} P\psp{\alpha}_{j+l-k,j+l-k}.
\end{equation}
Let $l_1, l_2 \in \natural_0$ be such that $l_1 + l_2 = l$. Then applying the
the $\partial_{z^*}^{l_2} \partial_z^{l_1}$ operator
to $t\psp{\alpha,l}_j$ and using \eqref{higher-rotated-diffs} results in a
linear combination of Zernike polynomials of parameter $\alpha + l$. By
comparing the resulting expression term by term with the result of applying
\autoref{pro:Zernike-CC} with $(\alpha,\gamma,m,n)$ replaced by
$(\alpha,\alpha+l,j+l_2,j+l_1)$ (which, because $\poch{-l}{k} = 0$ if $k \geq
l+1$, is a sum with the same number of effective terms) we find that
\begin{equation*}
\partial_{z^*}^{l_2} \partial_z^{l_1} t\psp{\alpha,l}_j
= \frac{\Gamma(\alpha+j+l_1+1) \Gamma(\alpha+j+l_2+1)}{\Gamma(j+l_1+1) \Gamma(j+l_2+1)} P\psp{\alpha}_{j+l_2,j+l_1},
\end{equation*}
whence, using \eqref{GRA}, \eqref{squaredNorm-2d}, and \eqref{proto-equivalent-seminorm},
\begin{multline}\label{t-l-seminorm}
\abs{t\psp{\alpha,l}_j}_{\HH^l_{\rho^\alpha}(B^2)}^2
\cong_l \sum_{q = 0}^l \norm{\partial_{z^*}^q \partial_z^{l-q} t\psp{\alpha,l}_j}_{\LL^2_{\rho^\alpha}(B^2)}^2\\
= \frac{\pi \, \Gamma(\alpha+1)^2}{2j+l+\alpha+1} \sum_{q = 0}^l \frac{\Gamma(\alpha+j+l-q+1) \Gamma(\alpha+j+q+1)}{\Gamma(j+l-q+1) \Gamma(j+q+1)}.
\end{multline}
Let us define for integer $j \geq l$ the indices $N\psp{l}_j$ and the residuals
$R\psp{\alpha,l}_j$ by
\begin{equation}\label{indexAndResidual}
N\psp{l}_j := 2j + 2l -1
\qquad\text{and}\qquad
R\psp{\alpha,l}_j :=  t\psp{\alpha,l}_j - \proj\psp{\alpha}_{N\psp{l}_j}\left(t\psp{\alpha,l}_j\right).
\end{equation}
As $R\psp{\alpha,l}_j$ is exactly the $k = 0$ term of the sum in \eqref{t},
using \eqref{squaredNorm-2d},
\begin{equation}\label{R-L2-norm}
\norm{R\psp{\alpha,l}_j}_{\LL^2_{\rho^\alpha}(B^2)}^2  = \frac{\pi \Gamma(\alpha+1)^2}{2j+2l+\alpha+1} \frac{\Gamma(\alpha+j+l+1)^2}{\Gamma(j+l+1)^2} \frac{\Gamma(\alpha+2j+l+1)^2}{\Gamma(\alpha+2j+2l+1)^2}
\end{equation}
and, by \autoref{pro:wrong-seminorm},
\begin{equation}\label{R-1-seminorm}
\abs{R\psp{\alpha,l}_j}_{\HH^1_{\rho^\alpha}(B^2)}^2
= \frac{4 \, \pi \, \Gamma(\alpha+1)^2 \Gamma(\alpha+j+l+1)^2 (\alpha+2j+2l+1)^2}{(\alpha+1) \Gamma(j+l+1)^2 \poch{\alpha+2j+l+1}{l+1}^2} (j+l) (\alpha+j+l+1).
\end{equation}
%
Thus, using the asymptotic formula \eqref{GRA}, for $r \in \{0,1\}$,
\begin{equation}\label{knownRate}
\left. \abs{R\psp{\alpha,l}_j}_{\HH^r_{\rho^\alpha}(B^2)} \right/ \abs{t\psp{\alpha,l}_j}_{\HH^l_{\rho^\alpha}(B^2)}
\sim C_{\alpha,l,r} j^{3/2 r - l}
\sim \tilde C_{\alpha,l,r} (N\psp{l}_j)^{3/2 r - l}
\end{equation}
as $j \to \infty$. By the norm equivalence of \autoref{pro:equivalentNorm} and
the fact that the Fourier--Zernike series of $t\psp{\alpha,l}_j$ only have
non-zero terms of degrees between $2j$ and $2j+2l$ we can replace the seminorms
above by the corresponding norms. Thus, the choice $(u, N) =
(t\psp{\alpha,l}_j, N\psp{l}_j)$ turns the inequalities of \autoref{cor:L2-approx} and that of the $r = 1$ case of
\autoref{thm:lossyProjection} into asymptotic equalities; hence, the power on
$N$ in each case is sharp.
\end{proof}

\subsubsection*{Conjectured sharpness of main result in general}

We conjecture that \autoref{thm:lossyProjection} is optimal with respect to the
power on $N$ in the $r \geq 2$ case as well and that, for every $\alpha > -1$
and $l \in \natural$, the same sequence $(t\psp{\alpha,l}_j)_{j=l}^\infty$
defined in \eqref{t} attains the proved upper-bound rate
asymptotically in the same way it does so in the proof of
\autoref{thm:optimality}; that is, we conjecture that for all $\alpha > -1$, $l
\in \natural$ and $r \in \{1, \dotsc, l\}$, as $j \to \infty$,
\begin{equation}\label{conjecturedRate}
\rat\psp{\alpha,l}_{r,j} := \left. \abs{R\psp{\alpha,l}_j}_{\HH^r_{\rho^\alpha}(B^2)} \right/ \abs{t\psp{\alpha,l}_j}_{\HH^l_{\rho^\alpha}(B^2)}
\sim C_{\alpha,l,r} j^{-1/2 + 2 r - l}
\sim \tilde C_{\alpha,l,r} (N\psp{l}_j)^{-1/2 + 2 r - l}
\end{equation}
with $N\psp{l}_j$ and $R\psp{\alpha,l}_j$ as in \eqref{indexAndResidual} and
with the same immateriality of using seminorms instead of norms discussed in the
proof of \autoref{thm:optimality}.

\subsubsection*{Numerical tests in the proved and conjectured cases} The
conjecture \eqref{conjecturedRate} is informed by numerical tests coded in the
Julia programming language\footnote{Code available from
\url{https://github.com/lfiguero/ZernikeSuite}.}.
The code has the encoding of a polynomial by its finite Fourier--Zernike coefficients as its basic data structure and is able to differentiate, perform some changes of basis and compute inner products and norms of the represented polynomials mainly by using equations \eqref{squaredNorm-2d}, \eqref{rotated-diffs} and those of \autoref{pro:extra-expansions}.
We numerically compute the
ratio in the leftmost expression of \eqref{conjecturedRate} including both the
proved $r = 0$ and $r = 1$ cases and the conjectured $r \geq 2$ case. The
behavior of a representative instance of those tests is shown in
\autoref{fig:cst} and \autoref{tab:cst}; there $\rat\psp{\alpha,l}_{r,j}$ is
the seminorm ratio defined in \eqref{conjecturedRate} and the experimental
growth rate with respect to the truncation degree is
\begin{equation}
\experimentalRate_r := \frac{\log\left(\left.\rat\psp{\alpha,l}_{r,j}\right/\rat\psp{\alpha,l}_{r,j'}\right)}{\log\left(\left.N\psp{l}_j\right/N\psp{l}_{j'}\right)},
\end{equation}
where $N\psp{l}_j$ and $N\psp{l}_{j'}$ are consecutive truncation degrees in
the table (in one-to-one correspondence with their respective $j$ and $j'$ via
\eqref{indexAndResidual}). It is apparent from both the figure and the table
that the experimental growth rate in each case does indeed approach the rate
predicted by \autoref{cor:L2-approx} and \autoref{thm:lossyProjection} thus
corroborating the sharpness result \autoref{thm:optimality} proved for $r \in
\{0,1\}$ and supporting the conjecture \eqref{conjecturedRate} posed for $r
\geq 2$.

\begin{figure}
\includegraphics[width=\linewidth]{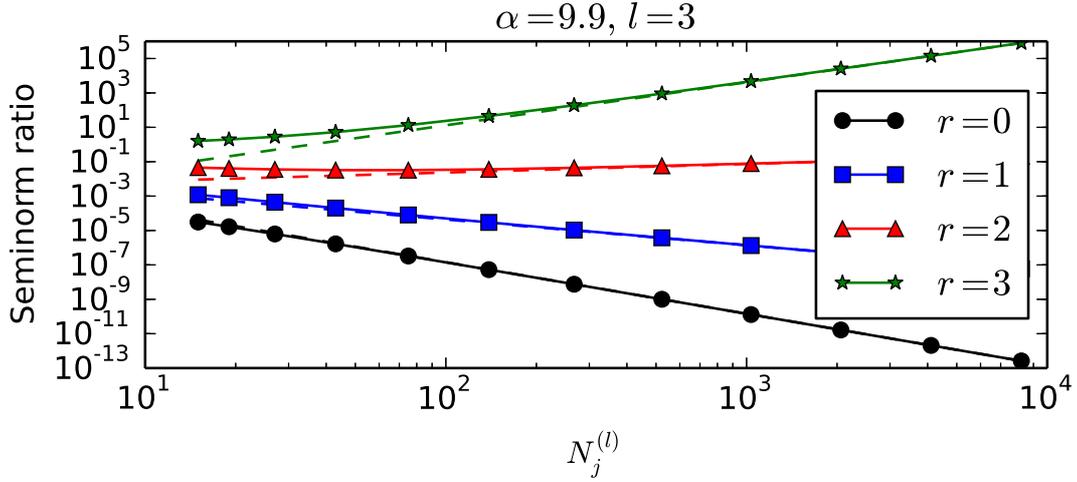}
\caption{Logarithmic plot of truncation degrees $N\psp{l}_j$ versus computed
seminorm ratios $\rat\psp{\alpha,l}_{r,j}$, $r \in \{0, \dotsc, l\}$, for one
instance of $\alpha$ and $l$. For comparison purposes plots of constant
positive scalar multiples of the function $N \mapsto N^{e(l,r)}$ (cf.\
\eqref{growthRate}) are also shown.}
\label{fig:cst}
\end{figure}

\begin{table}
\begin{tabular}{rl.l.l.l.}
$N^{(l)}_j$ & \ratio{\alpha}{l}{0}{j} & \egr{\alpha}{l}{0}{j} & \ratio{\alpha}{l}{1}{j} & \egr{\alpha}{l}{1}{j} & \ratio{\alpha}{l}{2}{j} & \egr{\alpha}{l}{2}{j} & \ratio{\alpha}{l}{3}{j} & \egr{\alpha}{l}{3}{j}\\\hline
15 & 3.11e-05 &   \nan & 1.20e-03 &   \nan & 4.55e-02 &   \nan & 1.61e+00 &   \nan\\
19 & 1.66e-05 & -2.665 & 8.06e-04 & -1.687 & 4.06e-02 & -0.480 & 1.96e+00 & 0.820\\
27 & 6.29e-06 & -2.754 & 4.44e-04 & -1.698 & 3.57e-02 & -0.369 & 2.83e+00 & 1.041\\
43 & 1.67e-06 & -2.847 & 2.02e-04 & -1.692 & 3.25e-02 & -0.205 & 5.28e+00 & 1.342\\
75 & 3.29e-07 & -2.921 & 8.02e-05 & -1.661 & 3.23e-02 & -0.008 & 1.34e+01 & 1.677\\
139 & 5.29e-08 & -2.965 & 2.96e-05 & -1.615 & 3.60e-02 & 0.174 & 4.54e+01 & 1.977\\
267 & 7.53e-09 & -2.986 & 1.06e-05 & -1.571 & 4.41e-02 & 0.311 & 1.91e+02 & 2.197\\
523 & 1.01e-09 & -2.994 & 3.77e-06 & -1.540 & 5.75e-02 & 0.397 & 9.17e+02 & 2.336\\
1035 & 1.30e-10 & -2.997 & 1.33e-06 & -1.522 & 7.80e-02 & 0.446 & 4.76e+03 & 2.414\\
2059 & 1.65e-11 & -2.999 & 4.72e-07 & -1.511 & 1.08e-01 & 0.472 & 2.58e+04 & 2.456\\
4107 & 2.08e-12 & -2.999 & 1.67e-07 & -1.506 & 1.51e-01 & 0.486 & 1.43e+05 & 2.478\\
8203 & 2.61e-13 & -3.000 & 5.90e-08 & -1.503 & 2.12e-01 & 0.493 & 7.99e+05 & 2.489\\
\end{tabular}
\caption{Truncation degrees, computed seminorm ratios for $r \in \{0, \dotsc,
l\}$ and experimental growth rates in the $\alpha = 9.9$ and $l = 3$ case.}
\label{tab:cst}
\end{table}

\appendix

\section{A density result}\label{sec:density}

In this appendix we will prove that, for all $d \in \natural$,
$\CC^\infty(\overline{B^d})$ is dense in the Sobolev-type space
$\WZ_\alpha(B^d)$ defined in \eqref{definition-of-WZ}. We will find it
convenient to define the index set
\begin{equation*}
\mathcal{I} := \left\{ (i,j) \mid i, j \in \{1, \dotsc, d\},\ i < j \right\}.
\end{equation*}
and denote
\begin{equation*}
D_{i,j} = x_j \, \partial_i - x_i \, \partial_j.
\end{equation*}
Given $\lambda > 0$ let us define the dilation operator $\delta_\lambda$ which
maps any function $f \colon \Real^d \to \Real$ to the function $\delta_\lambda
f \colon \Real^d \to \Real$ defined by
\begin{equation*}
(\forall\,x\in\Real^d) \quad \delta_\lambda f(x) := f(\lambda x).
\end{equation*}
Because of the change of variable properties of the Lebesgue integral with
respect to linear transformations (see, for example, \cite[Theorem~3.6.1 and
Corollary~3.6.4]{Bogachev}) $\delta_\lambda$ maps $\LL^p(\Real^d)$ into itself
and $\norm{\delta_\lambda f}_{\LL^p(\Real^d)} = \lambda^{-d/p}
\norm{f}_{\LL^p(\Real^d)}$.  Another consequence is that for any $f \in
\LL^1_{\mathrm{loc}}(\Real^d)$ and any multi-index $\beta \in \natural_0^d$,
$\partial_\beta (\delta_\lambda f) = \lambda^{\abs{\beta}} \delta_\lambda
(\partial_\beta f)$; hence,
\begin{equation}\label{dltDiffRules}
(\forall\,i\in\{1,\dotsc,d\}) \quad \partial_i(\delta_\lambda f) = \lambda \, \delta_\lambda(\partial_i f), \qquad
(\forall\,(i,j)\in\mathcal{I}) \quad D_{i,j}(\delta_\lambda f) = \delta_\lambda(D_{i,j} f).
\end{equation}

\begin{proposition}\label{pro:dilationContinuous} Let $p \in [1,\infty)$. For
all $f \in \LL^p(\Real^d)$, $\lim_{\lambda \to 1} \norm{f - \delta_\lambda
f}_{\LL^p(\Real^d)} = 0$.
\end{proposition}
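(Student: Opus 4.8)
The plan is the classical two-step density argument. \emph{First}, I would check the statement for functions $g$ in the dense subclass $\CC_c(\Real^d)$ of continuous, compactly supported functions. If $\operatorname{supp} g \subseteq B_{\Real^d}(0,R)$ then, for every $\lambda$ in the fixed interval $[1/2,2]$, the dilate $\delta_\lambda g$ is supported in $\overline{B_{\Real^d}(0,2R)}$, so $g - \delta_\lambda g$ is supported there as well and
\[
\norm{g - \delta_\lambda g}_{\LL^p(\Real^d)} \leq \abs{B_{\Real^d}(0,2R)}^{1/p}\, \sup_{x \in \Real^d} \abs{g(x) - g(\lambda x)}.
\]
Because $g$ is uniformly continuous and $\abs{x - \lambda x} = \abs{1-\lambda}\,\abs{x} \leq 2R\,\abs{1-\lambda}$ for $x \in \overline{B_{\Real^d}(0,2R)}$ (the function under the supremum vanishing identically for $x$ outside that ball when $\lambda \in [1/2,2]$), the supremum goes to $0$ as $\lambda \to 1$, which proves the claim for $g$.

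\emph{Second}, given an arbitrary $f \in \LL^p(\Real^d)$ and $\varepsilon > 0$, I would pick $g \in \CC_c(\Real^d)$ with $\norm{f - g}_{\LL^p(\Real^d)} < \varepsilon$ (possible since $p < \infty$) and split, for $\lambda \in [1/2,2]$,
\[
\norm{f - \delta_\lambda f}_{\LL^p(\Real^d)} \leq \norm{f - g}_{\LL^p(\Real^d)} + \norm{g - \delta_\lambda g}_{\LL^p(\Real^d)} + \norm{\delta_\lambda(g - f)}_{\LL^p(\Real^d)}.
\]
Using the identity $\norm{\delta_\lambda h}_{\LL^p(\Real^d)} = \lambda^{-d/p}\norm{h}_{\LL^p(\Real^d)}$ recorded just before the statement, the last term is at most $2^{d/p}\varepsilon$; together with the first step this yields $\limsup_{\lambda \to 1} \norm{f - \delta_\lambda f}_{\LL^p(\Real^d)} \leq (1 + 2^{d/p})\varepsilon$, and letting $\varepsilon \downarrow 0$ finishes the proof.

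There is no essential difficulty here; the only thing requiring a little attention is the support bookkeeping in the first step, which is why I freeze $\lambda$ in a bounded neighbourhood of $1$ at the outset so that all the dilates appearing share one common compact support. The hypothesis $p < \infty$ is used exactly twice---in the density of $\CC_c(\Real^d)$ in $\LL^p(\Real^d)$ and in the finiteness of $\lambda^{-d/p}$---and the conclusion genuinely fails when $p = \infty$.
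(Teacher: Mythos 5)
Your proof is correct and follows essentially the same route as the paper: approximate by a dense class of compactly supported nice functions, handle that class via uniform continuity, and transfer to general $f\in\LL^p$ by the triangle inequality together with the scaling identity $\norm{\delta_\lambda h}_{\LL^p(\Real^d)}=\lambda^{-d/p}\norm{h}_{\LL^p(\Real^d)}$. The only (cosmetic) difference is that you confine $\lambda$ to $[1/2,2]$ so all dilates share one compact support and the first step is a single uniform-convergence estimate, whereas the paper works with $\CC^\infty_0(\Real^d)$ and bounds the contribution of the annulus $B(0,R/\lambda)\setminus B(0,R)$ separately.
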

\begin{proof} Let us assume for now that $f \in
\CC^\infty_0(\Real^d)$. Then there exists $R > 0$ such that
$\operatorname{supp}(f) \subset B(0, R)$. Also, restricted to $B(0, 2R)$, $f$
is uniformly continuous; that is,
\begin{equation*}
(\forall\,\epsilon > 0)\ (\exists\,\delta(\epsilon) > 0)\ (\forall\,x,y\in B(0,2R)) \quad \norm{x-y} < \delta(\epsilon) \implies \abs{f(x) - f(y)} < \epsilon.
\end{equation*}
Given any $\epsilon > 0$, let $\delta^*(\epsilon) := \min(\delta(\epsilon) / R,
1)$.  Then, for all $x \in B(0,R)$,
\begin{equation*}
\abs{\lambda - 1} < \delta^*(\epsilon)
\implies \norm{\lambda x - x} = \abs{\lambda - 1} \norm{x} < \delta(\epsilon)
\implies \abs{f(\lambda x) - f(x)} < \epsilon.
\end{equation*}
In other words, $\delta_\lambda f$ tends to $f$ uniformly in $B(0,R)$ as
$\lambda$ tends to $1$, whence $\lim_{\lambda \to 1} \int_{B(0,R)} \abs{f -
\delta_\lambda f}^p = 0$. On the other hand,
\begin{equation*}
\int_{\Real^d \setminus B(0,R)} \abs{f - \delta_\lambda f}^p
= \int_{B(0,R/\lambda) \setminus B(0,R)} \abs{\delta_\lambda f}^p
\leq \abs{B(0,R/\lambda) \setminus B(0,R)} \norm{f}_{\LL^\infty(\Real^d)}^p
\end{equation*}
which also tends to $0$ as $\lambda \to 1$, so $\delta_\lambda f$ tends to $f$
in $\LL^p(\Real^d)$.

Let now $f$ be an arbitrary member of $\LL^p(\Real^d)$ and let $\epsilon > 0$.
As $\CC^\infty_0(\Real^d)$ is dense in $\LL^p(\Real^d)$ (cf.\
\cite[Corollary~4.2.2]{Bogachev}) there exists $g \in \CC^\infty_0(\Real^d)$
such that $\norm{f - g}_{\LL^p(\Real^d)} < \epsilon/6$. From the argument above
we also know that there exists some $\delta^*$ such that $\norm{g -
\delta_\lambda g}_{\LL^p(\Real^d)} < \epsilon/2$ if $\abs{\lambda-1} <
\delta^*$. Now, if $\abs{\lambda - 1} < 1 - 2^{-p/d}$, $1 + \lambda^{-d/p} <
3$. Thus, if $\abs{\lambda - 1} < \min(\delta^*, 1-2^{-p/d})$,
\begin{multline*}
\norm{f - \delta_\lambda f}_{\LL^p(\Real^d)} + \norm{\delta_\lambda (f - g)}_{\LL^p(\Real^d)} + \norm{\delta_\lambda g - g}_{\LL^p(\Real^d)}\\
= (1+\lambda^{-d/p}) \norm{f-g}_{\LL^p(\Real^d)} + \norm{g - \delta_\lambda g}_{\LL^p(\Real^d)} < 3 \epsilon/6+\epsilon/2 = \epsilon.
\end{multline*}
\end{proof}

\begin{lemma}\label{lem:DSL} Let $\alpha \geq 0$ and $f \in \WZ_\alpha(B^d)$.
Then, $\delta_\lambda f \xrightarrow{\lambda \to 1^{-}} f$ in $\WZ_\alpha$.
\end{lemma}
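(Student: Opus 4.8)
The plan is to reduce the whole statement to a single scaling estimate for the dilation operator on a weighted $\LL^2$ space together with a routine density argument. First I would record that for every $\beta \geq 0$, every $g \in \LL^2_{\rho^\beta}(B^d)$ and every $\lambda \in (0,1]$ one has $\norm{\delta_\lambda g}_{\LL^2_{\rho^\beta}(B^d)} \leq \lambda^{-(d+2\beta)/2} \norm{g}_{\LL^2_{\rho^\beta}(B^d)}$. This comes from the change of variable $y = \lambda x$ in $\int_{B^d} \abs{g(\lambda x)}^2 \rho(x)^\beta \dd x$, which turns it into $\lambda^{-d-2\beta} \int_{B(0,\lambda)} \abs{g(y)}^2 (\lambda^2 - \abs{y}^2)^\beta \dd y$, combined with the elementary inequality $0 < \lambda^2 - \abs{y}^2 \leq 1 - \abs{y}^2 = \rho(y)$ valid for $\abs{y} < \lambda \leq 1$, the monotonicity of $t \mapsto t^\beta$ for $\beta \geq 0$, and $B(0,\lambda) \subseteq B^d$. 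Via \eqref{dltDiffRules} (applied with $\beta = \alpha$ for $f$ and for the $D_{i,j} f$, and with $\beta = \alpha+1$ for the $\partial_i f$) this same bound also shows $\delta_\lambda f \in \WZ_\alpha(B^d)$ for $\lambda < 1$, so that the asserted convergence even makes sense.

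Second, I would prove that $\lim_{\lambda \to 1^-} \norm{g - \delta_\lambda g}_{\LL^2_{\rho^\beta}(B^d)} = 0$ for every $\beta > -1$ and $g \in \LL^2_{\rho^\beta}(B^d)$, by a three-$\epsilon$ argument: given $\epsilon > 0$, pick a polynomial $p$ with $\norm{g - p}_{\LL^2_{\rho^\beta}(B^d)} < \epsilon$ (density is part \ref{it:density} of \autoref{pro:basic}); since $\lambda \overline{B^d} \subseteq \overline{B^d}$ for $\lambda \leq 1$ and $p$ is uniformly continuous on the compact set $\overline{B^d}$, $\delta_\lambda p \to p$ uniformly on $\overline{B^d}$ as $\lambda \to 1^-$, hence in $\LL^2_{\rho^\beta}(B^d)$ because $\rho^\beta \in \LL^1(B^d)$; finally control the tail $\norm{\delta_\lambda(g-p)}_{\LL^2_{\rho^\beta}(B^d)}$ with the uniform bound from the first step.

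Third, I would assemble the three contributions to the $\WZ_\alpha(B^d)$ norm of $f - \delta_\lambda f$. For $\lambda \in (0,1)$, as $\delta_\lambda f|_{B^d}$ only involves the restriction of $f$ to $B(0,\lambda) \compEmb B^d$, the identities \eqref{dltDiffRules} hold on $B^d$ by the same change-of-variable argument; in particular $\partial_i(\delta_\lambda f) = \lambda\, \delta_\lambda(\partial_i f)$ and $D_{i,j}(\delta_\lambda f) = \delta_\lambda(D_{i,j} f)$. Writing $\partial_i f - \partial_i(\delta_\lambda f) = (\partial_i f - \delta_\lambda(\partial_i f)) + (1-\lambda)\delta_\lambda(\partial_i f)$, the first summand tends to $0$ in $\LL^2_{\rho^{\alpha+1}}(B^d)$ by the second step with $\beta = \alpha+1$ (legitimate since $\alpha + 1 \geq 0$) and the second by the scaling bound; meanwhile $D_{i,j} f - \delta_\lambda(D_{i,j} f)$ and $f - \delta_\lambda f$ tend to $0$ in $\LL^2_{\rho^\alpha}(B^d)$ directly by the second step with $\beta = \alpha$. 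Summing the squares and invoking the definition \eqref{definition-of-WZ} of the norm concludes the proof.

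The one genuinely delicate point — and the place where the hypothesis $\alpha \geq 0$ (hence $\alpha + 1 \geq 0$ as well) is essential — is the scaling bound of the first step: if $\beta < 0$ then $(\lambda^2 - \abs{y}^2)^\beta \geq \rho(y)^\beta$ points the wrong way, $\delta_\lambda$ need not be uniformly bounded on $\LL^2_{\rho^\beta}(B^d)$, and in fact the conclusion fails for $\alpha \in (-1,0)$, consistently with the failure of density of $\CC^\infty(\overline{B^d})$ in $\WZ_\alpha(B^d)$ in that range. Everything else is bookkeeping with \eqref{dltDiffRules} and the elementary convergence established in the second step.
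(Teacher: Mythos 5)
Your argument is correct, and it takes a genuinely different route from the paper's. The paper transfers the problem to the unweighted setting: it splits $\norm{g-\delta_\lambda g}_{\LL^2_w(B^d)}^2 \leq 2(J_1+J_2)$, where $J_1$ compares $w^{1/2}g$ with its dilate and is handled by \autoref{pro:dilationContinuous} (strong continuity of dilations on the unweighted $\LL^2(\Real^d)$, applied to the extension by zero of $w^{1/2}g$), while $J_2$ carries the weight-ratio error and is controlled by splitting $B^d$ into a thin boundary annulus (absolute continuity of the Lebesgue integral) and an interior ball (uniform convergence of $w(x)/w(\lambda x)$ to $1$); the hypothesis $\alpha \geq 0$ enters there through the monotonicity of $w = \rho^\beta$, $\beta \geq 0$, in $\abs{x}$, which gives $w(x)/w(\lambda x) \leq 1$. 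You instead stay in the weighted space throughout: the scaling bound $\norm{\delta_\lambda g}_{\LL^2_{\rho^\beta}(B^d)} \leq \lambda^{-(d+2\beta)/2} \norm{g}_{\LL^2_{\rho^\beta}(B^d)}$ for $\beta \geq 0$ (your change of variables together with $(\lambda^2-\abs{y}^2)^\beta \leq \rho(y)^\beta$ is exactly where $\alpha \geq 0$ is used), combined with density of polynomials (\autoref{pro:basic}) and the uniform convergence $\delta_\lambda p \to p$ on $\overline{B^d}$, yields strong continuity of $\lambda \mapsto \delta_\lambda g$ in $\LL^2_{\rho^\beta}(B^d)$ directly by the standard uniform-boundedness-plus-dense-subset argument; then \eqref{dltDiffRules} and the extra term $(1-\lambda)\delta_\lambda(\partial_i f)$ finish the assembly just as in the paper. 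Your route avoids both the auxiliary \autoref{pro:dilationContinuous} and the two-region analysis of the weight ratio, at the price of the (easy) scaling estimate; the paper's route is the one that would survive if the weight were merely monotone near the boundary rather than behaving so simply under dilation.

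Two small corrections. First, your intermediate claim in the second step should be stated for $\beta \geq 0$ rather than for all $\beta > -1$: the tail term $\norm{\delta_\lambda(g-p)}_{\LL^2_{\rho^\beta}(B^d)}$ is controlled by the scaling bound, which, as you yourself point out, is unavailable for $\beta < 0$; since you only invoke the claim with $\beta = \alpha$ and $\beta = \alpha+1$, both nonnegative under the hypothesis, this is harmless but should be fixed in the statement. Second, your closing assertion that the conclusion actually \emph{fails} for $\alpha \in (-1,0)$ is not established by the paper (which only refrains from proving density in that range) nor by your argument; either supply a counterexample or soften it to the statement that the proof breaks down there.
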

\begin{proof} This proof is based on the proof of
\cite[Theorem~7.2]{Kufner:1985}. From the structure of the norm of
$\WZ_\alpha(B^d)$, the differentiation rules \eqref{dltDiffRules} and the
obvious fact that $\lim_{\lambda \to 1^{-}} \lambda = 1$ our desired result
would follow from
\begin{subequations}\label{shifts}
\begin{gather}
\label{dilation-id}%
\lim_{\lambda \to 1^{-1}} \norm{f - \delta_\lambda f}_{\LL^2_{\rho^\alpha}(B^d)} = 0,\\
\label{dilation-cart-diffs}%
(\forall\,i\in\{1,\dotsc,d\}) \quad \lim_{\lambda \to 1^{-}} \norm{\partial_i f - \delta_\lambda (\partial_i f)}_{\LL^2_{\rho^{\alpha+1}}(B^d)} = 0
\intertext{and}
\label{dilation-ang-diffs}%
(\forall\,(i,j) \in \mathcal{I}) \quad \lim_{\lambda \to 1^{-}} \norm{D_{i,j} f - \delta_\lambda (D_{i,j} f)}_{\LL^2_{\rho^\alpha}(B^d)} = 0,
\end{gather}
\end{subequations}
all of which we prove in the sequel. Let $(g, w)$ be any of $(f, \rho^\alpha)$,
$(\partial_i f, \rho^{\alpha+1})$ (for $i \in \{1, \dotsc, d\})$ or $(D_{i,j}
f, \rho^\alpha)$ (for $(i,j) \in \mathcal{I})$ and let $\epsilon > 0$. Upon
introducing
\begin{gather*}
J(\lambda) := \int_{B^d} \abs{g(x) - g(\lambda x)}^2 w(x) \dd x,\\
J_1(\lambda) := \int_{B^d} \abs{g(x) w(x)^{1/2} - g(\lambda x) w(\lambda x)^{1/2}}^2 \dd x
\intertext{and}
J_2(\lambda) := \int_{B^d} \abs{g(\lambda x)}^2 \abs{w(\lambda x)^{1/2} - w(x)^{1/2} }^2 \dd x
= \int_{B^d} \abs{g(\lambda x)}^2 w(\lambda x) \abs{\frac{w(x)^{1/2}}{w(\lambda x)^{1/2}} - 1}^2 \dd x
\end{gather*}
we find that $J(\lambda) \leq 2 \left[ J_1(\lambda) + J_2(\lambda) \right]$.
Now, applying \autoref{pro:dilationContinuous} to the extension by zero of
$w^{1/2} g \in \LL^2(B^d)$ we find that there exists $\lambda_1 \in (0,1)$ such
that $\lambda_1 < \lambda < 1$ implies $J_1(\lambda) < \epsilon^2/4$. From now
on we assume that $\lambda \in (\lambda_1,1)$. Now, given $\zeta \in (0,1)$,
the integral defining $J_2(\lambda)$ is the sum of the integral over $B(0,1)
\setminus B(0,\zeta)$ and the integral over $B(0,\zeta)$; we denote the former
by $J_{2,1}(\lambda,\zeta)$ and the latter by $J_{2,2}(\lambda,\zeta)$. As
$\lambda \in (0,1)$ and $w$ is monotonically non-increasing with respect to the
modulus of its argument,
\begin{equation*}
\frac{w(x)}{w(\lambda x)} \leq 1
\quad\text{and}\quad
1 - \left[\frac{w(x)}{w(\lambda x)}\right]^{1/2} \leq 1.
\end{equation*}
Thus,
\begin{equation*}
J_{2,1}(\lambda,\zeta)
\leq \int_{B(0,1) \setminus B(0,\zeta)} \abs{g(\lambda x)}^2 w(\lambda x) \dd x
= \lambda^{-d} \int_{B(0,\lambda) \setminus B(0,\lambda\zeta)} \abs{g(x)}^2 w(x) \dd x.
\end{equation*}
If $\zeta$ is close enough to $1$ the measure of the region $B(0,\lambda)
\setminus B(0,\lambda \zeta)$ can be made arbitrarily small. This, the fact
that $w^{1/2} g \in \LL^2(B^d)$, the absolute continuity of the Lebesgue
integral (cf.\ \cite[Theorem~2.5.7]{Bogachev}) and $0 < \lambda_1 < \lambda <
1$ imply that there exists $\zeta \in (0,1)$ such that $J_{2,1}(\lambda,\zeta)
< \epsilon^2/8$. Let us fix such $\zeta$. Then,
\begin{equation*}
\begin{split}
J_{2,2}(\lambda,\zeta)
& \leq \sup_{x \in B(0,\zeta)} \abs{\frac{w(x)^{1/2}}{w(\lambda x)^{1/2}} - 1}^2 \int_{B(0,\zeta)} \abs{g(\lambda x)}^2 w(\lambda x) \dd x\\
& = \sup_{x \in B(0,\zeta)} \abs{\frac{w(x)^{1/2}}{w(\lambda x)^{1/2}} - 1}^2 \lambda^{-d} \int_{B(0,\lambda \zeta)} \abs{g(x)}^2 w(x) \dd x.
\end{split}
\end{equation*}
As the integral in the last expression is bounded by
$\norm{g}_{\LL^2_w(B^d)}$, $\lambda^{-d} < \lambda_1^{-d}$ and
\begin{equation*}
\lim_{\lambda \to 1^-} \left( 1 - \left[\frac{w(x)}{w(\lambda x)}\right]^{1/2} \right) = 0
\end{equation*}
uniformly in $B(0,\zeta)$, we conclude that there exists $\lambda_2 \in
(\lambda_1, 1)$ such that $J_{2,2}(\lambda,\zeta) < \epsilon^2/8$ if $\lambda_2
< \lambda < 1$. Combining this with the other obtained bounds we have that
$J(\lambda) = \norm{g - \delta_\lambda g}_{\LL^2_w(B^d)}^2 < \epsilon^2$ if
$\lambda_2 < \lambda < 1$ and thus all the limits appearing in \eqref{shifts}
have been proved.
\end{proof}

\begin{corollary}\label{cor:WZ-density-nonNeg}
If $\alpha \geq 0$, then $\CC^\infty(\overline{B^d})$ is
dense in $\WZ_\alpha(B^d)$.
\end{corollary}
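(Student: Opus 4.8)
The plan is to combine \autoref{lem:DSL}, which already carries the substantive part of the argument, with an ordinary mollification argument performed on a ball strictly larger than $\overline{B^d}$. Concretely, given $f\in\WZ_\alpha(B^d)$ and $\varepsilon>0$, I would first invoke \autoref{lem:DSL} to pick $\lambda\in(0,1)$ with $\norm{f-\delta_\lambda f}_{\WZ_\alpha(B^d)}<\varepsilon/2$; it then suffices to approximate the single function $\delta_\lambda f$ by $\CC^\infty(\overline{B^d})$ functions in $\WZ_\alpha(B^d)$. The point of the dilation is that $\delta_\lambda f$ is actually defined on $B_{\Real^d}(0,1/\lambda)$, and on any intermediate ball $B_{\Real^d}(0,r)$ with $1<r<1/\lambda$ the pulled-back weight $x\mapsto\rho(\lambda x)=1-\lambda^2\abs{x}^2$ is bounded above and below by positive constants; hence, using the differentiation rules \eqref{dltDiffRules} and the definition \eqref{definition-of-WZ}, $\delta_\lambda f$ restricted to $B_{\Real^d}(0,r)$ belongs to the unweighted space $\HH^1(B_{\Real^d}(0,r))$ and its angular derivatives $D_{i,j}(\delta_\lambda f)=\delta_\lambda(D_{i,j}f)$, $(i,j)\in\mathcal{I}$, lie in $\LL^2(B_{\Real^d}(0,r))$ as well.

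Next I would fix such an $r$, take a standard family of mollifiers $(\psi_\eta)_{\eta>0}$, and set $g_\eta:=\psi_\eta*\delta_\lambda f$; for $\eta<r-1$ this is of class $\CC^\infty$ on the neighbourhood $B_{\Real^d}(0,r-\eta)$ of $\overline{B^d}$, so $g_\eta|_{\overline{B^d}}\in\CC^\infty(\overline{B^d})$. By the standard mollifier estimates on $B_{\Real^d}(0,r)$, as $\eta\to0^{+}$ one has $g_\eta\to\delta_\lambda f$ and $\partial_i g_\eta=\psi_\eta*\partial_i(\delta_\lambda f)\to\partial_i(\delta_\lambda f)$ in $\LL^2(B^d)$ for each $i$; since $\alpha\ge0$ forces $\rho^\alpha\le1$ and $\rho^{\alpha+1}\le1$ on $B^d$, the same convergences hold in $\LL^2_{\rho^\alpha}(B^d)$ and $\LL^2_{\rho^{\alpha+1}}(B^d)$ respectively. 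For the angular terms I would write $D_{i,j}g_\eta=x_j\,\bigl(\psi_\eta*\partial_i(\delta_\lambda f)\bigr)-x_i\,\bigl(\psi_\eta*\partial_j(\delta_\lambda f)\bigr)$ and use that multiplication by the bounded functions $x_i,x_j$ is continuous on $\LL^2(B^d)$ to get $D_{i,j}g_\eta\to x_j\partial_i(\delta_\lambda f)-x_i\partial_j(\delta_\lambda f)=D_{i,j}(\delta_\lambda f)$ in $\LL^2(B^d)$, hence in $\LL^2_{\rho^\alpha}(B^d)$. Summing these three contributions gives $\norm{g_\eta-\delta_\lambda f}_{\WZ_\alpha(B^d)}\to0$, and choosing $\eta$ so that this quantity is below $\varepsilon/2$ and combining with the first paragraph yields $\norm{f-g_\eta}_{\WZ_\alpha(B^d)}<\varepsilon$ with $g_\eta|_{\overline{B^d}}\in\CC^\infty(\overline{B^d})$, as required.

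I do not expect a genuine obstacle here, since \autoref{lem:DSL} does the hard part. The remaining work is the routine bookkeeping of the radii $1<r-\eta<r<1/\lambda$ and the observation that the operators $D_{i,j}$, although they have variable coefficients and therefore do not literally commute with convolution, cause no trouble because $\partial_i$ does commute with convolution and multiplication by the coefficients $x_i$ acts continuously on $\LL^2(B^d)$. If anything, the only point requiring a little care is verifying that $\delta_\lambda f$ really lies in $\HH^1$ of an open neighbourhood of $\overline{B^d}$ (and that its angular derivatives are square integrable there), but this is immediate from the local boundedness above and below of the relevant weights away from $\mathbb{S}^{d-1}$ after dilation, together with \eqref{dltDiffRules}.
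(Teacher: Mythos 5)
Your proposal is correct and follows essentially the same route as the paper: first use \autoref{lem:DSL} to replace $f$ by a dilate $\delta_\lambda f$, then approximate the dilate by $\CC^\infty(\overline{B^d})$ functions in the unweighted $\HH^1$ sense and pass to $\WZ_\alpha(B^d)$ using that for $\alpha\geq 0$ the weights $\rho^\alpha$, $\rho^{\alpha+1}$ and the coefficients $x_i$ are bounded. The only difference is cosmetic: the paper cites the standard density of $\CC^\infty(\overline{B^d})$ in $\HH^1(B^d)$, whereas you reprove it by mollifying on the enlarged ball $B_{\Real^d}(0,r)\subset B_{\Real^d}(0,1/\lambda)$ made available by the dilation, which is equally valid.
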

\begin{proof} Let $f \in \WZ_\alpha(B^d)$ and let $\epsilon > 0$. From
\autoref{lem:DSL} we know there exists some $\lambda \in (0,1)$ such that
$\norm{f - \delta_\lambda f}_{\WZ_\alpha(B^d)} < \epsilon$. Because of the
scaling of the argument, $\partial_\lambda f$ belongs to the standard Sobolev
space $\HH^1(B^d)$, whence there exists $\varphi \in
\CC^\infty(\overline{B^d})$ such that $\norm{\delta_\lambda f -
\varphi}_{\HH^1(B^d)} \leq \epsilon$ (cf.\
\cite[Theorem~3.22]{AF:2003}). As (because $\alpha \geq 0$) all the weight functions involved are bounded by $1$ and the functions $x \mapsto x_j$ appearing in the definition of the differential operators $D_{i,j}$ are bounded, $\WZ_\alpha(B^d) \subset \HH^1(B^d)$ with continuous embedding. Thus, $\norm{f - \varphi}_{\WZ_\alpha(B^d)} \leq (1+C) \epsilon$, where $C$ is the constant of the aforementioned embedding.
\end{proof}

\bibliographystyle{amsplain}
\bibliography{za-refs}

\end{document}